\numberwithin{equation}{section}
\newcommand{\Ric}{{\rm Ric}}
\newcommand{\Vol}{{\rm Vol}}
\newtheorem{theorem}{Theorem}[section]
\newtheorem{lemma}[theorem]{Lemma}
\theoremstyle{definition}
\theoremstyle{remark}
\newtheorem{remark}{Remark}[section]
\theoremstyle{remark}
\theoremstyle{remark}
\theoremstyle{remark}
\theoremstyle{remark}
\begin{document}

\title{Rigidity of eigenvalues of shrinking Ricci solitons}
\date{\today}

\author{Chang Li}
\email{chang\_li@pku.edu.cn}
\address{School of Mathematics, Renmin University of China}

\author{Huaiyu Zhang}
\email{zhymath@outlook.com}
\address{School of Mathematics and Statistics, Nanjing University of Science and Technology}

\author{Xi Zhang}
\email{mathzx@njust.edu.cn}
\address{School of Mathematics and Statistics, Nanjing University of Science and Technology}

\begin{abstract}
In this paper, we study the rigidity of eigenvalues of shring Ricci solitons. It is known that the drifted Laplacian on shrinking Ricci solitons has discrete spectrum, its eigenvalues have a lower bound and a rigidity result holds. Firstly, we show that if the $n^\text{th}$ eigenvalue is close to this lower bound, then the $n$-soliton must be the trivial Gaussian soliton $\mathbb{R}^n$. Secondly, we show similar results  for the $(n-1)^\text{th}$ and $(n-2)^\text{th}$ eigenvalue under a non-collapsing condition. Lastly, we give an alomost rigidity for the $k^\text{th}$ eigenvalue with general $k$. Part of our results could be viewed as an soliton (could be noncompact) analog of Theorem 1.1 (which only holds for compact manifolds) in  Peterson (Invent. Math. 138 (1999): 1-21).

\vspace{10pt}
\textbf{Keywords:} Ricci soliton, rigidity, eigenvalue gap

\vspace{10pt}
\textbf{Mathematics Subject Classification: 53E20, 53C24, 58C40}
\end{abstract}

\maketitle
\section{introduction}

A gradient shrinking Ricci soliton is a Riemannian manifold $(M^n,g)$ equipped with a potential function $f$ such that
\begin{align}\label{BEEinstein}
\Ric_f:=\Ric+\nabla^2f=\frac{1}{2}g,
\end{align}
where $\Ric$ is the Ricci curvature of $g$, and $\Ric_f:=\Ric+\nabla^2f$ is named as the Bakry–\'Emery Ricci curvature of $g$ (or $f$-Ricci curvautre of $g$). Ricci solitons are a generalization of the Einstein manifolds in Bakry–\'Emery geometry.

Introduced by Hamilton,  Ricci solitons are important in the theory of Ricci flow (see \cite{Ha93}). They are both the self-similar solutions of Ricci flow and the critical metrics of Perelman's $\mu$-functional. Moreover, Ricci solitons model singularity formation
for the Ricci flow and 
the study of Ricci solitons leads to a better understanding of the singular structures of Ricci flow. Ricci flow has many great applications, such as Poincar\'e conjecture and differential sphere theorem. However, singularity occurs while people are trying to apply Ricci flow in many various geometric problems.   Thus 
 people have great interest to study the geometry of Ricci solitons, see \cite{TZ00}, \cite{Pe03}, \cite{TLZ23}, \cite{Na10}, \cite{TZ12}, \cite{SZ12},   \cite{Br13}, \cite{SU21}, \cite{PW10}, \cite{MW17}, \cite{JWZ}, \cite{CM22}, \cite{CM23b}, etc. and  references therein.

 In the study of solitons, it is known that a suitable  volume element and a correspongding  operator is:
\begin{align}\label{drifted}
d\mu:=e^{-f}d\Vol_g,\ \  \Delta_f u:=\Delta u-\langle \nabla f,\nabla u\rangle, \forall u\in C^\infty(M).
\end{align}
where $d\Vol_g$ is the standard volume element taken with respect to $g$. A direct calculus shows $\Delta_f$ is formally self-adjoint on the space of square-integrable functions on $M$ taken with respect to the volume element $d\mu$.

In fact, a Riemannian manifold $(M,g)$ equipped with a volume element $e^{-f}d\Vol_g$ is also known as a smooth metric measure space. And the operator $\Delta_f$ in \eqref{drifted} is known as the drifted Laplacian (or Witten Laplacian), wich carries important geometric information of the manifold and has attracted great attention. See \cite{CM22}, \cite{CM23}, \cite{Li12}, \cite{LL18}, \cite{CZ17}, \cite{Be20}, etc. for the study of the drifted Laplacian on solitons or smooth metric measure spaces.

On the other hand, rigidity of eigenvalues of manifolds satisfying some curvature conditions is an important topic in geometry. For example, the celebreated Lichnerowicz-Obata  theorem states that if the Ricci curvature of a compact Riemannian manifold satisfies $\Ric\ge (n-1)g$, then the first eigenvalue of Laplace-Bertrami operator satisfies $\lambda_1\ge n$, and the equality holds if and only if the manifold is a round sphere. 

In \cite{Pe99} and \cite{Au05}, Peterson and Aubry extended part of the Lichnerowic-Obata  theorem to an almost rigidity theorem, which states that if the Ricci curvature of a compact Riemannian manifold satisfies $\Ric\ge (n-1)g$, then the manifold is close to a round sphere in Gromov-Hausdorff sense provided that the $n^\text{th}$ eigenvalue $\lambda_n$ of Laplace-Bertrami operator is sufficiently close to $n$. See also \cite{CWZ24} for an analog of Lichnerowicz-Obata  theorem and the almost rigidity theorem for K\"ahler manifolds.

For shrinking Ricci solitons, we can also study the  analog of the Lichnerowicz-Obata  theorem and  the almost rigidity theorem. Since the spectrum of the classical Laplace-Bertrami operator on shrinking Ricci solitons is not discrete in general, the appropriate operator to be considered might be the drifted Laplacian.  By a series of results in \cite{BE85}, \cite{Mo05}, \cite{HN14} and \cite{CZ17}, one has that the drifted Laplacian on a complete gradient shrinking Ricci soliton has discrete spectrum, and the first eigenvalue satisfies $\lambda_1(\Delta_f)\ge \frac{1}{2}$. Moreover, if the first $k^{\text{th}}$ eigenvalues $\lambda_1(\Delta_f),...\lambda_k(\Delta_f)$  are equal to $\frac{1}{2}$, then the soliton splits as a product of some soliton with the Gaussian soliton. The Gaussian soliton is the structure $(\mathbb{R}^{k},\bar g_{\text{Euc};\mathbb{R}^{k}},\bar f_{\text{Euc};\mathbb{R}^{k}})$, here and below $\bar g_{\text{Euc};\mathbb{R}^k}$ denotes the Euclidean metric on $\mathbb{R}^k$, $\bar f_{\text{Gau};\mathbb{R}^k}$ denotes the function $\frac{\sum_{i=1}^k (x^i)^2}{4}+c$ for any $(x^1,...,x^k)\in \mathbb{R}^k$  and $c$ is a constant. It is the most trivial shrinking Ricci solitons. In fact, most  results therein are valid for manifolds satisfies $\Ric_f\ge \frac{1}{2}g$, however we only focus on the soliton case in this paper. See also \cite{GKKO20} for a generalization of these results to RCD spaces.

Since this eigenvalue lower bounds and  rigidity result establishes an analog of the Lichnerowicz-Obata  theorem for shrinking Ricci solitons,  motivated by Peterson and Aubry's almost rigidity theorem in \cite{Pe99} and \cite{Au05} for compact manifolds, in this paper, we  study the almost rigidity for shrinking Ricci solitons.

In fact, better than an almost rigidity result, we have the following gap theorem, which is our first main result:
\begin{theorem}\label{mthmre1}
There exists $\delta=\delta(n)>0$, such that for any complete gradient shrinking Ricci soliton $(M^n,g,f)$, 
if  $\lambda_n(\Delta_f) \le \frac{1}{2}+\delta$, then $(M^n,g,f)$ is identical to $(\mathbb{R}^{n},\bar g_{\text{Euc};\mathbb{R}^n},\bar f_{\text{Gau};\mathbb{R}^n})$.
\end{theorem}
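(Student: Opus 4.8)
The plan is to argue by contradiction through a compactness argument. The mechanism is that $\lambda_n$ close to $\tfrac12$ produces $n$ eigenfunctions with $L^2(d\mu)$-small Hessians, and, since the ambient dimension is also $n$, these force an almost-splitting in \emph{all} $n$ directions, hence an a priori non-collapsing (this is exactly why no non-collapsing hypothesis is needed here, whereas the $n-1$ or $n-2$ almost-linear functions of the $\lambda_{n-1},\lambda_{n-2}$-statements may leave a collapsed factor). Throughout I would normalize $f$ by $R+|\nabla f|^2=f$, so that $f\ge0$, $f$ is proper and attains its infimum, and the standard a priori facts for $\Delta_f$-eigenfunctions on a shrinker (polynomial growth, finiteness of all $d\mu$-moments, validity of the weighted divergence theorem) are available.

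First I would record the basic estimate. Since $\tfrac12\le\lambda_1\le\cdots\le\lambda_n\le\tfrac12+\delta$, each $\lambda_j$ with $1\le j\le n$ lies in $[\tfrac12,\tfrac12+\delta]$; choose $L^2(d\mu)$-orthonormal eigenfunctions $u_1,\dots,u_n$ with $\Delta_f u_j=-\lambda_j u_j$. The weighted Bochner identity, using $\Ric_f=\tfrac12 g$, reads
\begin{equation*}
\tfrac12\,\Delta_f|\nabla u_j|^2=|\nabla^2u_j|^2+\langle\nabla u_j,\nabla\Delta_f u_j\rangle+\Ric_f(\nabla u_j,\nabla u_j)=|\nabla^2u_j|^2-\big(\lambda_j-\tfrac12\big)|\nabla u_j|^2 ,
\end{equation*}
so integrating against $d\mu$ (using $\int_M\Delta_f(\cdot)\,d\mu=0$ and $\int_M|\nabla u_j|^2\,d\mu=\lambda_j\int_M u_j^2\,d\mu=\lambda_j$) yields
\begin{equation*}
\int_M|\nabla^2u_j|^2\,d\mu=\lambda_j\big(\lambda_j-\tfrac12\big)\le\big(\tfrac12+\delta\big)\delta .
\end{equation*}
When $\delta=0$ this forces $\nabla^2u_j\equiv0$, and the linearly independent parallel fields $\nabla u_j$ give the splitting $M=\mathbb{R}^n$; the task is to show that the inequality already forces $\delta=0$.

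Suppose not: there are complete shrinkers $(M_i^n,g_i,f_i)$, none isometric to the Gaussian soliton, with $\lambda_n(\Delta_{f_i})\to\tfrac12$, hence $\|\nabla^2u^i_j\|_{L^2(d\mu_i)}\to0$ for the $n$ orthonormal eigenfunctions above; let $p_i$ be a minimum point of $f_i$. By the quantitative almost-splitting theory for spaces with Bakry--\'Emery Ricci curvature bounded below (here $\Ric_{f_i}=\tfrac12 g_i$), $n$ such almost-linear functions force $(M_i,g_i,p_i)$ to be pointed Gromov--Hausdorff close to a metric product $Y\times\mathbb{R}^n$; since $\dim M_i=n$, $Y$ is a point, so $(M_i,g_i,p_i)$ is pointed Gromov--Hausdorff close to $(\mathbb{R}^n,g_{\mathrm{Euc}},0)$, and in particular uniformly volume non-collapsed at $p_i$. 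Volume rigidity for spaces with bounded Bakry--\'Emery Ricci curvature (equivalently, $\epsilon$-regularity for Ricci shrinkers) then gives a local curvature bound, and the elliptic structure of the shrinker equations upgrades the convergence: along a subsequence $(M_i,g_i,f_i,p_i)\to(M_\infty,g_\infty,f_\infty,p_\infty)$ in the pointed smooth Cheeger--Gromov sense, with $(M_\infty,g_\infty,f_\infty)$ a complete shrinking soliton. As the Gromov--Hausdorff approximations work on balls of every radius, $(M_\infty,g_\infty)$ is flat, hence $(\mathbb{R}^n,g_{\mathrm{Euc}})$, and $\nabla^2f_\infty=\tfrac12 g_{\mathrm{Euc}}$ together with the normalization forces $f_\infty=\tfrac14|x|^2$.

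It remains to rule out the non-Gaussian $M_i$ for $i$ large. Smooth convergence to the Gaussian soliton gives $R_i\to0$ locally, in particular $f_i(p_i)=R_i(p_i)\to0$ (recall $\nabla f_i(p_i)=0$ and $R+|\nabla f|^2=f$). On any shrinking soliton, however, $R\ge0$ and $\Delta_fR=R-2|\Ric|^2$, so by the strong maximum principle $R$ is either everywhere positive or identically zero, and $R\equiv0$ forces $\Ric\equiv0$, $\nabla^2f\equiv\tfrac12 g$, i.e. the Gaussian soliton; more precisely, non-Gaussian shrinkers keep a definite distance from it, since $f_i(p_i)\to0$ propagates (via the uniform quadratic bounds on $f_i$, which control the tails of $e^{-f_i}$) to $\mu(g_i)\to0$, and the known entropy gap theorem for Ricci shrinkers then forces each such $M_i$ to be Gaussian --- a contradiction. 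Hence such a $\delta$ exists, and it depends only on $n$ since every estimate above does. \emph{The main obstacle} is the middle step: promoting the $L^2(d\mu)$-smallness of the Hessians to genuine metric control --- uniform non-collapsing and smooth subconvergence to $\mathbb{R}^n$ --- on a \emph{noncompact} manifold with no a priori curvature or diameter bound; once that is in place, the rigidity and gap of the Gaussian soliton make the concluding step comparatively soft.
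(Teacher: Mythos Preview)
Your overall architecture matches the paper's: both argue by contradiction, produce $n$ eigenfunctions with small $L^2(d\mu)$-Hessians via the weighted Bochner formula, use the almost-splitting theorem (the paper's Theorem~\ref{thm1.2}) with $k=n$ to get pointed Gromov--Hausdorff convergence to $\mathbb{R}^n$, and then upgrade to smooth Cheeger--Gromov convergence of the soliton triples. The non-collapsing you extract from GH-closeness to $\mathbb{R}^n$ is exactly the mechanism the paper uses (implicitly, via the $n$ functions $u_1,\dots,u_n$ whose gradients are an almost-orthonormal frame, so the map $\mathbf{u}$ has Jacobian close to $1$); this is why no volume hypothesis appears in Theorem~\ref{mthmre1} but does in Theorem~\ref{mthmre2}.

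The genuine difference is the \emph{closing step}. The paper invokes Colding--Minicozzi's strong rigidity \cite[Theorem~9.2 and Theorem~6.1]{CM22}: once $(M,g,f)$ is $C^{2,\alpha}$-close to the Gaussian on a sufficiently large sublevel set $\{f\le R(n)\}$ (which the smooth convergence furnishes), the soliton is \emph{identically} Gaussian. You instead argue that smooth local convergence together with the uniform quadratic bounds on $f_i$ forces $\mu(g_i)\to 0$, and then appeal to the entropy gap for shrinkers (Yokota) to conclude. Both routes are valid; yours is arguably lighter for this particular statement, but note two points. First, the passage from local smooth convergence to $\mu(g_i)\to 0$ needs a line of justification: one must show $\int_{M_i}(4\pi)^{-n/2}e^{-f_i}\,dV_{g_i}\to 1$, which follows from the uniform tail estimate $\mu_i(M_i\setminus B_R(p_i))\le C(n)R^{-2}$ (the paper's Lemma~\ref{lm2.5}) combined with convergence on compact sets. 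Second, your approach is specific to the Gaussian endpoint and would not extend to Theorem~\ref{mthmre2}, since there is no analogous entropy gap singling out $\mathbb{S}^2\times\mathbb{R}^{n-2}$; the paper's use of \cite{CM22} handles both theorems uniformly. The brief invocation of the strong maximum principle for $R$ is a red herring in your write-up: it gives $R>0$ on non-Gaussian shrinkers but not a quantitative gap, so the real input is the entropy gap, as you then say.
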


\begin{remark}
It is expectable that one must have the shrinking Ricci soliton  condition, i.e. $\Ric_f= \frac{1}{2}g$, to get a gap theorem like our Theorem \ref{mthmre1}, although  getting the eigenvalues lower bound $\lambda_n(\Delta_f)\ge \lambda_1(\Delta_f)\ge \frac{1}{2}$ only requires $\Ric_f\ge \frac{1}{2}g$. For example, recall the classical volume comparison theorem and Anderson's gap theorem \cite[Lemma 3.1]{An90}, which states that a $n$-manifold with $\Ric=0$ satisfying its asymptotic volume ratio  $\text{AVR}\ge 1-\delta(n)$ must be the Euclidean space. The classical volume comparison theorem gives the asymptotic volume ratio $\text{AVR}\le 1$ and only needs $\Ric\ge 0$, but Anderson's gap theorem needs $\Ric=0$. Thus our condition $\Ric_f=\frac{1}{2}g$ is expectable.
\end{remark}

\begin{remark}
From Theorem \ref{mthmre1}, it follows that any $n$-dimensional complete gradient shrinking  Ricci soliton has $\lambda_{n+1}(\Delta_f)>\frac{1}{2}+\delta(n)$.

To prove that, note that the spectrum of the drifted Laplacian on $(\mathbb{R}^{n},\bar g_{\text{Euc};\mathbb{R}^n},\bar f_{\text{Gau};\mathbb{R}^n})$ is
\begin{align}
\{\frac{i}{2}:i=0,1,...\},
\end{align}
where the eigenvalue $\frac{1}{2}$ has multiplicity $n$. Thus $(\mathbb{R}^{n},\bar g_{\text{Euc};\mathbb{R}^n},\bar f_{\text{Gau};\mathbb{R}^n})$ has $\lambda_{n+1}(\Delta_{\bar f_{\text{Gau};\mathbb{R}^n}})=1$. Therefore, assuming $\delta(n)<\frac{1}{2}$ without loss of generality, by Theorem \ref{mthmre1}, whether $(M^n,g,f)$ is identical to $(\mathbb{R}^{n},\bar g_{\text{Euc};\mathbb{R}^n},\bar f_{\text{Gau};\mathbb{R}^n})$ or not identical to it, $(M^n,g,f)$ must have $\lambda_{n+1}(\Delta_f)>\frac{1}{2}+\delta(n)$.
\end{remark}

We can improve the condition of $\lambda_n(\Delta_f)$ to a condition of $\lambda_{n-1}(\Delta_f)$ or $\lambda_{n-2}(\Delta_f)$ while assuming a non-collapsing condition. Our second main result is:
\begin{theorem}\label{mthmre2}
For any $v>0$, there exists $\delta=\delta(n,v)>0$, such that for any complete gradient shrinking Ricci soliton $(M^n,g,f)$ with
\begin{align}\Vol_g(B_1(p)\ge v,
\end{align}
where $p$ is a minimum point of $f$,
\begin{enumerate}
	\item if  $\lambda_{n-1}(\Delta_f) \le \frac{1}{2}+\delta$, then $(M^n,g,f)$ is identical to $(\mathbb{R}^{n},\bar g_{\text{Euc};\mathbb{R}^n},\bar f_{\text{Gau};\mathbb{R}^n})$

	\item
if  $\lambda_{n-2}(\Delta_f) \le \frac{1}{2}+\delta$, then $(M^n,g,f)$ is identical to  a product soliton $(X,\bar g_X,\bar f_X)\times (\mathbb{R}^{n-2},\bar g_{\text{Euc};\mathbb{R}^{n-2}},\bar f_{\text{Gau};\mathbb{R}^{n-2}}):=(X\times \mathbb{R}^{n-2},\bar g_X+\bar g_{\text{Euc};\mathbb{R}^{n-2}},\bar f_X+\bar f_{\text{Euc};\mathbb{R}^{n-2}})$, 
where $(X,\bar g_X,\bar f_X)$ is $(\mathbb{R}^{2},\bar g_{\text{Euc};\mathbb{R}^2},\bar f_{\text{Gau};\mathbb{R}^2})$, or the standard $\mathbb{S}^2$ or $\mathbb{RP}^2$  with constant $\bar f_X$.

\end{enumerate}
\end{theorem}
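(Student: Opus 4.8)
The plan is a single contradiction-and-compactness argument which, together with the rigidity part of the eigenvalue theorem recalled above, reduces each case to Theorem~\ref{mthmre1} and to the rigidity of a short list of model solitons. Suppose part~(1) fails for some $v>0$: there are complete gradient shrinking Ricci solitons $(M_i^n,g_i,f_i)$, with $f_i$ normalized so that $R_{g_i}+|\nabla f_i|^2=f_i$ and with $p_i$ a minimum point of $f_i$, such that $\Vol_{g_i}(B_1(p_i))\ge v$ and $\lambda_{n-1}(\Delta_{f_i})\to\frac12$, yet no $(M_i,g_i,f_i)$ is the Gaussian soliton on $\mathbb R^n$; for part~(2) one has instead $\lambda_{n-2}(\Delta_{f_i})\to\frac12$ and none of the $M_i$ is a product soliton of the listed type. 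Since $\frac12\le\lambda_1(\Delta_{f_i})\le\lambda_2(\Delta_{f_i})\le\cdots$, in the first case all of $\lambda_1(\Delta_{f_i}),\dots,\lambda_{n-1}(\Delta_{f_i})$ tend to $\frac12$, and in the second case all of $\lambda_1(\Delta_{f_i}),\dots,\lambda_{n-2}(\Delta_{f_i})$ tend to $\frac12$.

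First I would extract a limit. The Cao--Zhou estimates control $f_i$ and the growth of $\Vol_{g_i}$ in terms of the distance to $p_i$, and the non-collapsing hypothesis yields a uniform lower bound for the entropy of $g_i$ (equivalently a uniform $\kappa$-non-collapsing at unit scale); by the compactness theory for Ricci shrinkers of Haslhofer and M\"uller and its refinements, a subsequence of $(M_i,g_i,f_i,p_i)$ converges in the pointed $C^\infty_{\mathrm{loc}}$ Cheeger--Gromov sense, away from a closed singular set of Hausdorff codimension at least $4$, to a complete shrinking Ricci soliton $(M_\infty,g_\infty,f_\infty,p_\infty)$ of the same dimension $n$ (the volume bound forbids collapse), with convergence of the weighted measures $e^{-f_i}d\Vol_{g_i}$. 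Let $u^i_1,u^i_2,\dots$ be eigenfunctions of $\Delta_{f_i}$ for the eigenvalues $\lambda^i_j\to\frac12$, normalized in $L^2(e^{-f_i}d\Vol_{g_i})$ and orthogonal to the constants; the Bochner formula on shrinkers together with $\Ric_{f_i}=\frac12 g_i$ gives $\int_{M_i}|\nabla^2u^i_j|^2\,e^{-f_i}d\Vol_{g_i}=(\lambda^i_j-\tfrac12)\lambda^i_j\to0$. Using interior gradient estimates for eigenfunctions on shrinkers and the \emph{uniform} quadratic growth of the $f_i$ --- which forces a uniform decay of the weighted $L^2$-mass near infinity, so that no mass escapes --- the $u^i_j$ subconverge in $C^\infty_{\mathrm{loc}}$ to $L^2(e^{-f_\infty})$-orthonormal functions $u^\infty_1,\dots$ on $M_\infty$ that are orthogonal to the constants and have $\nabla^2u^\infty_j\equiv0$.

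These $n-1$ (resp.\ $n-2$) parallel-gradient functions exhibit an isometric splitting $M_\infty=N^{n-k}_\infty\times\mathbb R^k$ onto the Gaussian factor $(\mathbb R^k,\bar g_{\text{Euc};\mathbb R^k},\bar f_{\text{Gau};\mathbb R^k})$, with $k=n-1$ in case~(1) and $k=n-2$ in case~(2) (this is the rigidity part of the quoted eigenvalue theorem applied to $M_\infty$, for which $\lambda_k(\Delta_{f_\infty})=\frac12$ for $k\le n-1$, resp.\ $k\le n-2$), where $N^{n-k}_\infty$ is again a complete shrinking soliton. In case~(1), $N^1_\infty$ is a $1$-dimensional shrinking soliton, hence $(\mathbb R,\bar g_{\text{Euc};\mathbb R},\bar f_{\text{Gau};\mathbb R})$ --- the equation $f''=\frac12$ has no solution on $\mathbb S^1$ --- and so $M_\infty$ is the Gaussian soliton on $\mathbb R^n$. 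In case~(2), $N^2_\infty$ is a $2$-dimensional shrinking soliton, and it must be smooth: a singular point $x$ of $N^2_\infty$ would make $\{x\}\times\mathbb R^{n-2}$ a singular subset of $M_\infty$ of codimension $2$, contradicting the fact that singular sets of limits of smooth shrinkers have codimension at least $4$; hence, by the classification of $2$-dimensional shrinking Ricci solitons (Hamilton), $N^2_\infty$ is $(\mathbb R^2,\bar g_{\text{Euc};\mathbb R^2},\bar f_{\text{Gau};\mathbb R^2})$, the round $\mathbb S^2$, or the round $\mathbb{RP}^2$. In all cases $M_\infty$ is one of the model solitons appearing in the conclusion.

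It remains to transfer this back to the $M_i$, that is, to show that a shrinking Ricci soliton which converges in the above sense to such a model must in fact equal it. If $M_\infty$ is the Gaussian soliton on $\mathbb R^n$, then transplanting eigenfunctions from $M_\infty$ to the $M_i$ gives $\limsup_i\lambda_n(\Delta_{f_i})\le\lambda_n(\Delta_{\bar f_{\text{Gau};\mathbb R^n}})=\frac12$, so $\lambda_n(\Delta_{f_i})\le\frac12+\delta(n)$ for $i$ large and Theorem~\ref{mthmre1} forces $(M_i,g_i,f_i)$ to be the Gaussian soliton --- a contradiction (equivalently, the Gaussian densities of the $g_i$ converge to $1$ and one applies the entropy gap theorem). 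If $M_\infty$ is $\mathbb S^2\times\mathbb R^{n-2}$ or $\mathbb{RP}^2\times\mathbb R^{n-2}$, one invokes the rigidity of these cylindrical product shrinkers --- any shrinking Ricci soliton sufficiently close to such a product is isometric to it --- again contradicting the choice of the $M_i$; this proves both parts. The main obstacle is the compactness step: one must show that the \emph{single-point} non-collapsing bound at $\min f_i$, in conjunction with the Cao--Zhou structure, excludes both collapse and non-removable singularities in the limit (so that the smooth rigidity theorem from the introduction, rather than only an RCD analogue, applies), and one must control the first eigenfunctions so that no weighted $L^2$-mass escapes to infinity --- this nondegeneracy is exactly what makes the limiting functions span a genuine $\mathbb R^k$ factor. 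A secondary point requiring care is the rigidity of the product shrinkers $\mathbb S^2\times\mathbb R^{n-2}$ and $\mathbb{RP}^2\times\mathbb R^{n-2}$ used in the last step.
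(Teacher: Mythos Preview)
Your argument is essentially the same contradiction-and-compactness scheme the paper uses, and the ingredients line up: extract a smooth subsequential limit of non-collapsed shrinkers, show the limit splits off $\mathbb R^{n-1}$ or $\mathbb R^{n-2}$ via the limiting eigenfunctions, classify the low-dimensional factor, and then invoke Colding--Minicozzi's strong rigidity for cylindrical shrinkers to conclude. The paper packages the first half slightly differently --- it first proves a local $C^\ell$-closeness lemma (Lemma~\ref{mthm2}) by routing through the almost-splitting Theorem~\ref{thm1.2} and then applying the codimension-4 regularity of \cite{ZZ19}, whereas you invoke Haslhofer--M\"uller compactness directly and argue codimension-4 afterwards --- but the content is the same.

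Two organizational differences are worth noting. First, the paper proves part~(2) and then \emph{deduces} part~(1) from it by an eigenvalue count on $\mathbb S^2\times\mathbb R^{n-2}$ and $\mathbb{RP}^2\times\mathbb R^{n-2}$ (these have $\lambda_{n-1}\ge1$), which is cleaner than your separate treatment of the $1$-dimensional factor. Second, for the transfer-back step the paper applies the Colding--Minicozzi $(\star_R)$ criterion uniformly to all model solitons, while you split into cases and, in the Gaussian case, transplant eigenfunctions to force $\lambda_n(\Delta_{f_i})\to\tfrac12$ and appeal to Theorem~\ref{mthmre1}. Your route is correct but circuitous, since Theorem~\ref{mthmre1} is itself proved via the same Colding--Minicozzi rigidity you invoke for the cylindrical models; it is simpler to use \cite{CM22} directly in all cases, as the paper does. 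The caveats you flag (no loss of weighted $L^2$-mass of eigenfunctions at infinity; smoothness of the limit) are exactly the points the paper handles with the log-Sobolev decay Lemmas~\ref{lmballctl}--\ref{5lm3.5} and with \cite{ZZ19}, so your identification of the obstacles is accurate.
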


At last, for $\lambda_k(\Delta_f)$ with general $k$, we have the following almost rigidity theorem:
\begin{theorem}\label{thm1.2}
For any $r,\epsilon>0$, there exists $\delta=\delta(n,r,\epsilon)>0$, such that for any complete gradient shrinking Ricci soliton $(M^n,g,f)$, if $\lambda_k(\Delta_f) \le \frac{1}{2}+\delta$ for some nonnegative integer $k$, then
\begin{align*}
d_{\text{GH}}(B_{r}(p),B_{r}(\bar p,0^k))<\epsilon,
\end{align*}
for some product length space $(\bar p,0^k)\in X\times \mathbb{R}^k$, where $\mathbb{R}^k$ is equipped with the Euclidean metric, and $p$ is a minimum point of $f$.
\end{theorem}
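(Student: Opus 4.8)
\textit{Proof plan.}

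\textbf{Setup and reduction.} The case $k=0$ is trivial, so fix $r,\epsilon>0$ and suppose the statement fails for some $k\ge 1$. Note first that we may assume $k\le n$: by Theorem~\ref{mthmre1} (more precisely, by the remark following it) there is a universal gap $\lambda_{n+1}(\Delta_f)>\tfrac12+\delta_0(n)$, so once $\delta<\delta_0(n)$ the hypothesis $\lambda_k(\Delta_f)\le\tfrac12+\delta$ already forces $k\le n$; since $\{1,\dots,n\}$ is finite it then suffices to produce $\delta(n,r,\epsilon,k)$ for each such $k$ and take the minimum. So fix $k\in\{1,\dots,n\}$ and a sequence of complete gradient shrinking Ricci solitons $(M_i^n,g_i,f_i)$, with $p_i$ a minimum point of $f_i$ and $\lambda_k(\Delta_{f_i})\le\tfrac12+\tfrac1i$, such that $d_{\text{GH}}(B_r(p_i),B_r(\bar p,0^k))\ge\epsilon$ for every base point in a product length space as in the statement. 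Set $d\mu_i=e^{-f_i}\,d\Vol_{g_i}$ and normalize it to a probability measure (this changes neither $\Delta_{f_i}$ nor its spectrum). Since $\Ric_{f_i}=\tfrac12 g_i$, each $(M_i,d_{g_i},\mu_i)$ is an $\mathrm{RCD}(\tfrac12,\infty)$ space, and positivity of the curvature bound makes the measures uniformly concentrated; hence, after passing to a subsequence, $(M_i,d_{g_i},\mu_i,p_i)$ converges in the pointed measured Gromov--Hausdorff sense to an $\mathrm{RCD}(\tfrac12,\infty)$ space $(Y,d_Y,\mathfrak m_Y,y_\infty)$. (If one prefers to avoid this general fact, the required precompactness also follows from the Cao--Zhou estimates $\Vol_{g_i}(B_s(p_i))\le C(n)s^n$ and $\tfrac14(d_{g_i}(p_i,\cdot)-c)^2\le f_i\le\tfrac14(d_{g_i}(p_i,\cdot)+c)^2$, which control $\mu_i$ on balls and near infinity.)

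\textbf{Passing the spectral gap to the limit.} Since $\lambda_1(\Delta_{f_i})\le\cdots\le\lambda_k(\Delta_{f_i})$ and $\lambda_1(\Delta_{f_i})\ge\tfrac12$, the hypothesis forces $\lambda_j(\Delta_{f_i})\to\tfrac12$ for every $1\le j\le k$, the drifted Laplacian on each soliton having discrete spectrum by the results recalled in the introduction. By stability of the discrete spectrum of $\mathrm{RCD}(K,\infty)$ spaces with $K>0$ under pointed measured Gromov--Hausdorff convergence, $\lambda_j(Y)=\tfrac12$ for $1\le j\le k$; in particular $\lambda_k(Y)=\tfrac12$, the smallest value permitted by the $\mathrm{RCD}(\tfrac12,\infty)$ condition. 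As a cross-check one sees the vanishing of the obstruction already on the solitons: for an $L^2(\mu_i)$-normalized eigenfunction $u_j^i$ with $\Delta_{f_i}u_j^i=-\lambda_j^i u_j^i$, the weighted Bochner formula together with $\Ric_{f_i}=\tfrac12 g_i$ and integration against $\mu_i$ (legitimate because eigenfunctions of the drifted Laplacian on shrinkers grow at most polynomially while $\mu_i$ decays like $e^{-d^2/4}$, so a cutoff argument applies) gives $\int_{M_i}|\Hess u_j^i|^2\,d\mu_i=\lambda_j^i(\lambda_j^i-\tfrac12)\to 0$; lower semicontinuity of the $L^2$--Hessian under the convergence then produces limit eigenfunctions of vanishing Hessian, consistent with $\lambda_k(Y)=\tfrac12$.

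\textbf{Rigidity on the limit and conclusion.} By the $\mathrm{RCD}$ version of the eigenvalue--rigidity theorem quoted in the introduction (the generalization to $\mathrm{RCD}$ spaces, cf.\ \cite{GKKO20}, of the splitting result behind \cite{BE85,Mo05,HN14,CZ17}), an $\mathrm{RCD}(\tfrac12,\infty)$ space with $\lambda_k=\tfrac12$ is isometric, as a metric measure space, to a product $X\times(\mathbb{R}^k,\bar g_{\text{Euc};\mathbb{R}^k},\text{Gaussian})$ for some $\mathrm{RCD}(\tfrac12,\infty)$ space $X$; in particular $Y=X\times\mathbb{R}^k$ as a length space, with the Euclidean factor. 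Finally, pointed measured Gromov--Hausdorff convergence gives $\bar B_r(p_i)\to\bar B_r(y_\infty)$ in the Gromov--Hausdorff sense; writing $y_\infty=(\bar p,\bar x)\in X\times\mathbb{R}^k$ and translating the Euclidean factor so that $\bar x=0$, we obtain $d_{\text{GH}}(B_r(p_i),B_r(\bar p,0^k))\to 0$, contradicting the choice of the sequence.

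\textbf{Main obstacle.} The real content is the $\mathrm{RCD}(\tfrac12,\infty)$ stability package for the family of shrinking solitons pointed at the minimum of $f$: pointed measured Gromov--Hausdorff precompactness after normalizing the measures, convergence of the discrete spectrum, and (for the Bochner route) lower semicontinuity of the Hessian, together with the routine but necessary check that the integrations by parts in the Bochner identity are justified on the noncompact soliton. Once these are in place, the limiting rigidity and the ball--convergence step are essentially formal. Collapse is not a problem here: $\lambda_k(Y)=\tfrac12<\infty$ already prevents $Y$ from degenerating to a point, and the $\mathrm{RCD}$ framework accommodates lower-dimensional limits; it is exactly the possible collapse (or singularity) of the factor $X$ that blocks an upgrade of Theorem~\ref{thm1.2} to a gap statement without the non-collapsing hypothesis of Theorem~\ref{mthmre2}.
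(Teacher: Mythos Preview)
Your approach is genuinely different from the paper's and, modulo one circular step, is a valid alternative route. The paper proceeds constructively in the style of Cheeger--Colding: it first establishes the eigenfunction estimates of Lemma~\ref{lm2.1} (small $L^2$-Hessian, almost-orthonormal gradients in $L^1$, pointwise gradient bound), then uses these functions to build an explicit $\Psi(\delta)$-Gromov--Hausdorff map $x\mapsto(\bar x,\mathbf u(x))$ from $B_r(p)$ into a product, verifying almost-isometry via a quantitative Pythagorean lemma (Lemma~\ref{Pyth}) and almost-surjectivity via a geodesic-flow argument (Lemmas~\ref{Jianglm3.8}--\ref{Jianglm3.9}). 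Your route instead passes to a pmGH limit in the $\mathrm{RCD}(\tfrac12,\infty)$ category, transfers $\lambda_k=\tfrac12$ to the limit via spectral stability, and invokes the rigidity of \cite{GKKO20}. This is cleaner but offloads the analytic work to the RCD machinery; the paper's method is more self-contained and, importantly, produces the pointwise and integral eigenfunction estimates that are reused downstream (Step~B in the proofs of Lemma~\ref{mthm2'} and Lemma~\ref{mthm2} needs exactly the $C^\infty$ convergence of the $u_{\alpha;i}$ to parallel coordinate functions to split $\bar f$).

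There is one genuine circularity you should fix: you reduce to $k\le n$ by invoking Theorem~\ref{mthmre1} and the remark after it, but in this paper Theorem~\ref{mthmre1} is \emph{deduced from} Theorem~\ref{thm1.2} (via Lemma~\ref{mthm2'}), so you cannot appeal to it here. The repair is elementary and independent of the RCD framework: Lemma~\ref{lm2.2} already produces $k$ eigenfunctions with $\fint_M\langle\nabla u_i,\nabla u_j\rangle\,d\mu=\delta_{ij}$, and combining Lemma~\ref{lm2.1}(2),(3) with linear algebra in the $n$-dimensional tangent space (exactly the contradiction argument at the end of the proof of Theorem~\ref{thmap}) forces $k\le n$ once $\delta$ is small. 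With that patch your outline goes through, provided one is willing to cite pmGH precompactness and spectral stability for $\mathrm{RCD}(K,\infty)$ with $K>0$ together with the iterated splitting in \cite{GKKO20}; the metric precompactness of balls (needed so that pmGH convergence upgrades to GH convergence of balls) is indeed available here from the Wei--Wylie comparison (Lemma~\ref{volcomp12}) combined with the potential bounds of Lemma~\ref{lm2.4}, and the uniform tightness of the normalized measures is exactly Lemma~\ref{lm2.5}(2), as you indicate.
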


We will prove Theorem \ref{thm1.2} firstly, and the proof of Theorem \ref{mthmre1} and Theorem \ref{mthmre2} are based on our Theorem \ref{thm1.2} and the strong rigidity of cylindrical shrinkers given by Colding-Minicozzi II (see \cite[Theorem 9.2]{CM22}).

\textbf{Organization:} In section 2, we give some preliminary results for gradient shrinking Ricci solitons. In section 3, we construct a group of eigenfunctions satisfying certain estimates. In section 4, we construct the $\epsilon$-Gromov Hausdorff map based on the eigenfunctions constructed in section 2 and prove our Theorem \ref{thm1.2}. In section 5, we prove our Theorem \ref{mthmre1} and Theorem \ref{mthmre2}.

\section{preliminary results}
In this section we give some preliminary results for gradient shrinking Ricci solitons.

On a gradient shrinking Ricci soliton $(M^n,g,f)$, for the volume form 
$d\mu:=e^{-f}d\Vol_g$ and the drifted Laplacian $\Delta_f$,  by a direct calculus  we  have the following formula of integration by parts:
\begin{align}\label{intbyparts}
\int_M \Delta_f uv d\mu=-\int_M \langle \nabla u,\nabla v\rangle d\mu.
\end{align}

And we also have the following Bochner formula:
\begin{align}\label{Bochner formula}
\frac{1}{2}\Delta_f|\nabla u|^2=|\nabla^2 u|^2+\langle \nabla u,\nabla(\Delta_f u)\rangle +\Ric_f(\nabla u,\nabla u).
\end{align}

 Some results below were initially given for manifolds with more general Bakery-\'Emery Ricci curvature conditions, and we will show how they work for gradient shrinking Ricci solitons.

We begin with the following lemma, which is a minor modification of \cite[Theorem 1.2]{WW09}:
\begin{lemma}[Volume comparison for manifolds with Bakery-\'Emery Ricci curvature bounded below]\label{volcomp12}
Let $(M^n,g,f)$ be a complete Riemannian  manifold such that $\Ric_f\ge-(n-1)\Lambda^2g$. Then for any $0<r\le R$,
\begin{enumerate}
\item if $|\nabla f|\le A$ on the geodesic ball $B_R(p)$, then
\begin{align}
\frac{\mu(B_R(p))}{\mu(B_r(p))}\le e^{AR}\frac{\Vol_\Lambda^n(R)}{\Vol_\Lambda^n(r)},
\end{align}

\item if $|f|\le k$ on $B_R(p)$, then
\begin{align}
\frac{\mu(B_R(p))}{\mu(B_r(p))}\le \frac{\Vol_\Lambda^{n+4k}(R)}{\Vol_\Lambda^{n+4k}(r)},
\end{align}
where $\Vol_\Lambda^n(R)$ denotes the volume of geodesic ball  with radius $r$ in $n$-dimensional complete simply connected
space with constant curvature $-\Lambda^2$.
	\end{enumerate}
\end{lemma}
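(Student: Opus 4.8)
The plan is to follow the weighted volume comparison of Wei--Wylie \cite{WW09}, of which this lemma is a minor reformulation, and to indicate only the modification that is needed. First I would fix $p$ and pass to geodesic polar coordinates centered at $p$, writing $d\Vol_g=\mathcal{A}(r,\theta)\,dr\,d\theta$ and $d\mu=\mathcal{A}_f(r,\theta)\,dr\,d\theta$ with $\mathcal{A}_f:=e^{-f}\mathcal{A}$, and letting $\mathcal{A}_\Lambda^n$ denote the radial volume density of the model space form of curvature $-\Lambda^2$, so that $\Vol_\Lambda^n(R)=\int_0^R\mathcal{A}_\Lambda^n(s)\,ds$. Along a unit-speed minimal geodesic $\gamma$ from $p$, set $m:=\partial_r\ln\mathcal{A}$ and $m_f:=\partial_r\ln\mathcal{A}_f=m-\partial_r f$; since $\gamma$ is a geodesic, $(\partial_r f)'=\Hess f(\gamma',\gamma')$, so the Riccati inequality $m'\le-\tfrac{m^2}{n-1}-\Ric(\gamma',\gamma')$ (a consequence of \eqref{Bochner formula} applied to $r$) together with $\Ric_f\ge-(n-1)\Lambda^2 g$ yields, wherever $\gamma$ is minimizing,
\begin{align*}
m_f'\ \le\ -\frac{m^2}{n-1}-\Ric_f(\gamma',\gamma')\ \le\ -\frac{m^2}{n-1}+(n-1)\Lambda^2 ,
\end{align*}
while the model mean curvature $\bar m(r)=(n-1)\Lambda\coth(\Lambda r)=\partial_r\ln\mathcal{A}_\Lambda^n$ is positive and satisfies $\bar m'=-\tfrac{\bar m^2}{n-1}+(n-1)\Lambda^2$.

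The key steps are then the following. (i) For part (1), $|\nabla f|\le A$ gives $\partial_r f\ge-A$, hence $m\ge m_f-A$; a barrier/maximum-principle comparison then shows $m_f(r)\le\bar m(r)+A$ on $(0,R]$ — at a first point where $m_f-\bar m=A$ one has $m\ge\bar m\ge0$, so $m^2\ge\bar m^2$ and hence $(m_f-\bar m)'\le0$ there, which after the standard $\varepsilon$-perturbation in $A$ is a contradiction — so $r\mapsto\mathcal{A}_f(r,\theta)\,e^{-Ar}/\mathcal{A}_\Lambda^n(r)$ is nonincreasing. (ii) For part (2) one cannot control $\partial_r f$ pointwise, so instead I would integrate the displayed Riccati inequality as in \cite{WW09}; using only $\mathrm{osc}_{B_R(p)}f\le 2k$ one obtains the comparison with effective dimension $n+4k$, i.e.\ the volume-ratio bound asserted in (2). (iii) In both cases the area comparison is promoted to the volume estimate by the usual Gromov monotonicity: if $r\mapsto\mathcal{A}_f(r,\theta)/w(r)$ is nonincreasing (with $\mathcal{A}_f$ set to $0$ beyond the cut point in direction $\theta$), then $\mu(B_R(p))/\mu(B_r(p))\le\big(\int_0^Rw\big)/\big(\int_0^rw\big)$, integrating over $\theta\in\mathbb{S}^{n-1}$ and using $\tfrac{a_1+a_2}{b_1+b_2}\le\max\{\tfrac{a_1}{b_1},\tfrac{a_2}{b_2}\}$; with $w=e^{As}\mathcal{A}_\Lambda^n(s)$, bounding $\int_0^Rw\le e^{AR}\Vol_\Lambda^n(R)$ and $\int_0^rw\ge\Vol_\Lambda^n(r)$ gives (1), and the model density for dimension $n+4k$ gives (2).

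The substantive content is entirely that of \cite{WW09}; the only change is replacing their hypothesis ``$\partial_r f\ge-a$ along minimal geodesics from $p$'' by the more convenient ``$|\nabla f|\le A$ on $B_R(p)$'' in part (1), which is immediate, and nothing about the shrinking-soliton structure is used beyond $\Ric_f\ge-(n-1)\Lambda^2g$ and smoothness of $f$ (which makes $f$ and $\nabla f$ locally bounded). The one genuinely delicate step — and the one I would be most careful about, or simply quote verbatim from \cite[Theorem 1.2]{WW09} — is the integrated Riccati estimate producing the effective dimension $n+4k$ in part (2); everything else is the familiar Bishop--Gromov bookkeeping.
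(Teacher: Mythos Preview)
Your proposal is correct and matches the paper's approach exactly: the paper does not give a proof but simply cites \cite[Theorem 1.2]{WW09} and remarks that since the Wei--Wylie argument only uses information along minimal geodesics from $p$ inside $B_R(p)$, the global hypotheses $|\nabla f|\le A$ (resp.\ $|f|\le k$) on $M$ can be weakened to the same bounds on $B_R(p)$. Your sketch of the polar-coordinate Riccati comparison and the Gromov integration is precisely the content of that reference, and you correctly flag the integrated estimate yielding effective dimension $n+4k$ as the one nontrivial step to be quoted from \cite{WW09}.
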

\begin{remark}
\cite{WW09} only states this result in the case that  $|\nabla f|\le A$ or $|f|\le k$ holds on the whole $M$.  However, the proof only involves the information in $B_R(p)$, thus it actually works for proving Lemma \ref{volcomp12} .

\cite{WW09} also gives results for nonnegative Bakry–\'Emery Ricci curvature lower bounds. However, Lemma \ref{volcomp12} is enough for our use.
\end{remark}

We also need the following lemma, \cite[Lemma 3.3]{WZ19} (see also \cite{Ja15}):
\begin{lemma}[Segment inequality for manifolds with Bakery-\'Emery Ricci curvature bounded below, ]\label{segie1}
Let $(M^n,g,f)$ be a complete Riemannian manifold such that $\Ric_f\ge-(n-1)\Lambda^2g$ and $|f|,|\nabla f|\le A$ for some nonnegative constant $\Lambda$ and $A$. Let $A_1,A_2$ be two subsets of $M$ and $W$ be another subset of $M$ such that $\cup_{y_1\in A_1,y_2\in A_2}\gamma_{y_1y_2}\subset W$, where $\gamma_{y_1y_2}$ is a minimal geodesic curve connecting $y_1$ and $y_2$. Then for any $e\in C^\infty(M)$, there holds
\begin{align}
\int_{A_1\times A_2}\int_0^{d(y_1,y_2)}e(\gamma_{y_1y_2}(s))dsd(\mu\times\mu)\le C(n,\Lambda,A)D\left(\mu(A_1)+\mu(A_2)\right)\int_W ed\mu,
\end{align}
where $\mu\times\mu$ is the product measure, $D=\sup\{d(y_1,y_2):y_1\in A_1,y_2\in A_2\}$.
\end{lemma}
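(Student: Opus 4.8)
\textbf{Proof proposal for Lemma \ref{segie1} (segment inequality).}

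The plan is to follow the classical Cheeger--Colding segment inequality argument (as in Cheeger's lecture notes, or \cite{WZ19}), carefully tracking how the weighted measure $d\mu=e^{-f}d\Vol_g$ and the bounds $|f|,|\nabla f|\le A$ enter. First I would reduce to a pointwise statement about a single pair of points and then integrate. Fix $y_1\in A_1$ and a unit vector at $y_1$; parametrize by the exponential map $\exp_{y_1}$, and for a point $y_2=\exp_{y_1}(R\xi)$ with $R=d(y_1,y_2)$ lying in the segment domain, write $\mathcal{F}_{y_1}(y_2):=\int_0^{R}e(\gamma_{y_1y_2}(s))\,ds$. The key is a change-of-variables comparison: for $0\le s\le R$ the point $\gamma_{y_1y_2}(s)=\exp_{y_1}(s\xi)$, and the weighted Jacobian of $\exp_{y_1}$ in polar coordinates, $\mathcal{A}_f(s,\xi):=e^{-f(\exp_{y_1}(s\xi))}\mathcal{A}(s,\xi)$ where $\mathcal{A}(s,\xi)$ is the unweighted volume density, satisfies a monotonicity/comparison coming precisely from the Bakry--\'Emery Bochner formula. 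Under $\Ric_f\ge-(n-1)\Lambda^2 g$ and $|\nabla f|\le A$, the weighted Laplacian comparison (Lemma \ref{volcomp12}, in its infinitesimal form, or directly the Riccati comparison for $\Delta_f r$) gives that for $0\le s\le t\le R$,
\begin{align}\label{jacratio}
\frac{\mathcal{A}_f(t,\xi)}{\mathcal{A}_f(s,\xi)}\le e^{A(t-s)}\frac{\sinh^{n-1}(\Lambda t)}{\sinh^{n-1}(\Lambda s)}\le C(n,\Lambda,A,D),
\end{align}
valid as long as $t\le D=\sup\{d(y_1,y_2):y_1\in A_1,y_2\in A_2\}$, since then $\sinh(\Lambda t)/\sinh(\Lambda s)$ is controlled by a function of $\Lambda D$ times $(t/s)^{n-1}$, and the factor $(t/s)^{n-1}$ is exactly what makes the subsequent integral converge.

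Next I would run the standard double integration. Write, for $y_1\in A_1$,
\begin{align}
\int_{A_2}\mathcal{F}_{y_1}(y_2)\,d\mu(y_2)\le \int_{A_2}\int_0^{d(y_1,y_2)}e(\gamma_{y_1y_2}(s))\,ds\,d\mu(y_2),
\end{align}
and in the inner integral split the $s$-range into $[0,d(y_1,y_2)/2]$ and $[d(y_1,y_2)/2,d(y_1,y_2)]$. On the first half, substitute $z=\gamma_{y_1y_2}(s)=\exp_{y_1}(s\xi)$: by \eqref{jacratio} the measure $d\mu(y_2)$ pushed back to the $z$-variable (with $y_2=\exp_{y_1}((2s)\xi)$ when $z=\exp_{y_1}(s\xi)$, i.e. $t=2s$) is dominated by $C(n,\Lambda,A,D)\,d\mu(z)$, and $z$ ranges over $W$ (this is where the hypothesis $\cup\gamma_{y_1y_2}\subset W$ is used). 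This yields
\begin{align}
\int_{A_2}\int_0^{d(y_1,y_2)/2}e(\gamma_{y_1y_2}(s))\,ds\,d\mu(y_2)\le C(n,\Lambda,A,D)\,D\int_W e\,d\mu,
\end{align}
the extra factor $D$ coming from the length of the $s$-interval (or more precisely from the $ds$ integration after the substitution; one keeps track that $s\le D/2$). The symmetric argument handles the second half with the roles of $y_1,y_2$ interchanged, producing a $\mu(A_1)$ on one side and a $\mu(A_2)$ on the other — this is the origin of the $\mu(A_1)+\mu(A_2)$ in the statement. Integrating the $y_1$-estimate over $A_1$ with respect to $d\mu$ and adding the two symmetric contributions gives exactly
\begin{align}
\int_{A_1\times A_2}\int_0^{d(y_1,y_2)}e(\gamma_{y_1y_2}(s))\,ds\,d(\mu\times\mu)\le C(n,\Lambda,A)\,D\bigl(\mu(A_1)+\mu(A_2)\bigr)\int_W e\,d\mu.
\end{align}

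The main technical obstacle — and the only place the $f$-terms genuinely matter — is establishing the weighted Jacobian comparison \eqref{jacratio} with a constant depending only on $n,\Lambda,A$ (and $D$, which in our applications is itself controlled). One must be careful that the naive Bakry--\'Emery comparison for $\Delta_f r$ involves $f'(r):=\partial_r f$ and hence a priori only $|\nabla f|\le A$ suffices for the gradient term, but to pass from $\Delta_f r$ back to the weighted density $\mathcal{A}_f$ and obtain a two-sided ratio bound one integrates $\partial_r\log\mathcal{A}_f=\Delta_f r$, which contributes a clean $e^{A(t-s)}$ factor; the hypothesis $|f|\le A$ is not strictly needed for this step but is harmless and is used elsewhere in the paper. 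A second minor point is that $\exp_{y_1}$ need not be a diffeomorphism globally, so one works on the segment domain (the star-shaped open set of directions realizing minimal geodesics) and notes the cut locus has measure zero; since every $\gamma_{y_1y_2}$ is assumed minimal this causes no loss. Everything else is the verbatim Cheeger--Colding bookkeeping, so I would state \eqref{jacratio} as the key lemma, prove it from Lemma \ref{volcomp12}'s infinitesimal version, and then cite \cite{WZ19} for the remaining routine integration.
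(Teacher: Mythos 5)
The paper does not prove Lemma \ref{segie1} at all: it is quoted verbatim from \cite[Lemma 3.3]{WZ19} (see also \cite{Ja15}), so there is no in-paper argument to compare against. Your sketch is the correct and standard one underlying that citation --- the weighted polar-coordinate Jacobian comparison $\partial_s\log\mathcal{A}_f=\Delta_f r\le(n-1)\Lambda\coth(\Lambda s)+A$ integrated to give the ratio bound, followed by the usual Cheeger--Colding half-segment change of variables producing the $\mu(A_1)+\mu(A_2)$ split. The one point worth making explicit is that your ratio bound, and hence the constant, genuinely depends on $D$ (through $e^{AD}$ and $\cosh(\Lambda D)$), so the clean form $C(n,\Lambda,A)\,D$ in the statement is slightly imprecise as written; you correctly flag this, and it is harmless in the paper since every application (Lemma \ref{segie}) takes place in a ball of fixed radius $r$, where $D\le 2r$ is absorbed into $C(n,r)$.
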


For gradient shrinking Ricci solitons we can get more delicate results. Let us state how the potential function $f$ is normalized firstly, (adding a constant to the potential function does not effect the nature of a soliton). Let $(M^n,g,f)$ be a complete gradient shrinking Ricci soliton, by \cite{Ha93} (see also \cite[Lemma 1.1]{Ca09}), we have
\begin{align}
R+|\nabla f|^2-f=C_1(g),
\end{align}
where $R$ is the scalar curvature of $g$, and $C_1(g)$ is a constant depending only on $g$. Without loss of generality we assume $f$ is normalized (by adding a constant) such that
\begin{align}\label{normf}
R+|\nabla f|^2-f=0.
\end{align}

To apply Lemma \ref{segie1} to gradient shrinking Ricci soliton and get more delicate results, we need the following lemma, which tells us how to compare $f$ with the distance function:
\begin{lemma}[Lemma 2.1 in \cite{HM11}]\label{lm2.4}
Let $(M^n,g,f)$ be a gradient shrinking Ricci soliton with $f$ normalized by satisfying (\ref{normf}), then $f$ attains its infimum. Let $p$ be a minimum point of $f$, we have
\begin{align}\label{fd}
\frac{1}{4}(d(x,p)-C_1(n))^2_{+}\le f(x)\le \frac{1}{4}(d(x,p)+C_2(n))^2,
\end{align}
where $C_i(n),i=1,2$ is a positive constant depending only on $n$, and $a_+:={\text{max}}\{0,a\}$.
\end{lemma}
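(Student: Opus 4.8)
The plan is to first establish that $f$ attains its infimum, and then prove the two-sided distance bound \eqref{fd}. For the existence of a minimum point: from the normalization \eqref{normf} we have $f = R + |\nabla f|^2 \ge R$, and by Chen's theorem the scalar curvature of a complete gradient shrinking Ricci soliton is nonnegative, so $f \ge 0$. In particular $f$ is bounded below. To show the infimum is attained, I would integrate the Bochner-type identity for solitons. Tracing \eqref{BEEinstein} gives $\Delta f = \frac{n}{2} - R$, hence $\Delta_f f = \Delta f - |\nabla f|^2 = \frac{n}{2} - R - |\nabla f|^2 = \frac{n}{2} - f$ (using \eqref{normf}). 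So $f$ satisfies the elliptic equation $\Delta_f f = \frac{n}{2} - f$. Since $f \to \infty$ along any ray where the distance grows (which will follow from the lower bound we prove next, so really one argues this carefully to avoid circularity), $f$ is a proper function bounded below, hence attains its infimum at some point $p$; at $p$ one has $\nabla f(p) = 0$, and by \eqref{normf} $f(p) = R(p) \ge 0$.

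For the upper bound in \eqref{fd}: let $\gamma$ be a unit-speed minimal geodesic from $p$ to $x$, and set $\varphi(s) = f(\gamma(s))$. Then $\varphi'(s) = \langle \nabla f, \gamma'(s)\rangle \le |\nabla f(\gamma(s))| = \sqrt{f(\gamma(s)) - R(\gamma(s))} \le \sqrt{\varphi(s)}$, using \eqref{normf} and $R \ge 0$. Integrating the differential inequality $\frac{d}{ds}\sqrt{\varphi} \le \frac12$ from $0$ to $d(x,p)$ yields $\sqrt{f(x)} \le \sqrt{f(p)} + \frac12 d(x,p)$, which gives the upper bound once we bound $f(p)$ by a dimensional constant; the bound $f(p) \le c(n)$ is standard (e.g.\ from the fact that $R(p) \le \frac{n}{2}$ at a minimum point, or from Cao–Zhou's estimates). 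For the lower bound one argues in the reverse direction but must be more careful, since $\nabla f$ can be small where $R$ is large: here one uses the equation $\Delta_f f = \frac{n}{2} - f$ together with the Laplacian comparison for $\Delta_f$ under $\Ric_f = \frac12 g \ge 0$ (as in Lemma \ref{volcomp12}'s circle of ideas, via Wei–Wylie), or more directly one integrates along geodesics using that $\nabla^2 f = \frac12 g - \Ric$ controls the growth of $|\nabla f|$ from below outside a fixed ball where $R$ is small. Concretely, for $d(x,p)$ large the Ricci curvature must be small in an integrated sense along the ray, forcing $\varphi'(s) \ge \frac12 \sqrt{\varphi(s)} - C(n)$ for $s$ outside a fixed interval, and integrating gives $\sqrt{f(x)} \ge \frac12(d(x,p) - C_1(n))$.

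The main obstacle is the lower bound, and specifically decoupling $R$ from $|\nabla f|^2$ along a geodesic: the identity \eqref{normf} only tells us $|\nabla f|^2 = f - R \le f$, which by itself gives no useful \emph{lower} bound on $|\nabla f|$. The standard resolution (following Cao–Zhou and Haslhofer–Müller) is to exploit the second-order information $\nabla^2 f = \frac12 g - \Ric$: integrating this along a minimal geodesic $\gamma$ from $p$ shows that the radial second derivative of $f$ is $\frac12 - \Ric(\gamma',\gamma')$, so $\varphi''(s) = \frac12 - \Ric(\gamma'(s),\gamma'(s))$, and combined with the fact that $\int_0^L \Ric(\gamma',\gamma')\,ds$ is controlled (by the second variation of arc length / the index form, since $\gamma$ is minimal), one obtains $\varphi(L) \ge \frac14 L^2 - C(n) L$, yielding the claimed lower bound. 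I would present this index-form argument as the heart of the proof; everything else is bookkeeping with \eqref{normf}, \eqref{BEEinstein}, and $R \ge 0$.
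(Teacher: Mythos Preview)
The paper does not give its own proof of this lemma; it is quoted verbatim as Lemma~2.1 of \cite{HM11}, with only a remark explaining why the normalization \eqref{normf} yields the stated form of \eqref{fd}. So there is no in-paper argument to compare against---your sketch is in fact a reconstruction of the Cao--Zhou/Haslhofer--M\"uller proof, and it follows that standard route correctly: the upper bound via the ODE $(\sqrt{\varphi})'\le\tfrac12$ together with $f(p)=R(p)\le\tfrac{n}{2}$, and the lower bound via $\varphi''=\tfrac12-\Ric(\gamma',\gamma')$ combined with the second-variation (index-form) bound on $\int\Ric(\gamma',\gamma')\,ds$ along a minimizing geodesic.

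One point deserves to be made explicit rather than flagged parenthetically. You note the circularity in using the lower bound to establish properness of $f$ while the lower bound as stated already presumes a minimum point $p$. The clean fix (this is how \cite{CZ10} proceeds) is to run the index-form argument with $p$ replaced by an \emph{arbitrary} base point $x_0$: the second-variation inequality and the identity $\varphi''=\tfrac12-\Ric(\gamma',\gamma')$ require only that $\gamma$ be minimizing, not that it emanate from a critical point of $f$, and one obtains $f(x)\ge\tfrac14(d(x,x_0)-c)_+^2$ with $c$ depending on $n$ and on $\sup_{B_1(x_0)}(|f|+|\nabla f|)$. This shows $f$ is proper, hence attains its infimum; only then does one specialize $x_0=p$ and use $\nabla f(p)=0$, $f(p)\le\tfrac{n}{2}$ to make the constants purely dimensional. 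Your outline has all the right ingredients, but the order of the logic should be made linear.
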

\begin{remark}
The inequality in \cite[Lemma 2.1]{HM11} looks like being different from \eqref{fd}. In fact, this is because our normalization of $f$ is different from that in \cite{HM11}. From \cite[(1.2) and (2.4)]{HM11}), one can see that our \eqref{fd} follows from \cite[Lemma 2.1]{HM11} given our normalization.
\end{remark}

Then we can apply the lemmas for manifolds with Bakery-\'Emery Ricci curvature bounded below above to gradient shrinking Ricci solitons. We have:
\begin{lemma}[Volume comparison for gradient shrinking Ricci solitons]\label{volcomp}
et $(M^n,g,f)$ be a complete gradient shrinking Ricci soliton, $p$ be a minimum point of $f$. Then for any $0<r\le R$,
\begin{align}
\frac{\mu(B_R(p))}{\mu(B_r(p))}\le C(n,r,R).
\end{align}
\end{lemma}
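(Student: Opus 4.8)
The strategy is to feed the quadratic growth bound for the potential function from Lemma~\ref{lm2.4} into the Bakry--\'Emery volume comparison of Lemma~\ref{volcomp12}. The key preliminary observation is that on a gradient shrinking Ricci soliton $\Ric_f=\tfrac12 g\ge 0$, so the hypothesis $\Ric_f\ge -(n-1)\Lambda^2 g$ of Lemma~\ref{volcomp12} holds for every $\Lambda\ge 0$; the most convenient choice is $\Lambda=0$, for which the model ball volume reduces to $\Vol_0^m(\rho)=\omega_m\rho^m$ (in the generalized, possibly non-integer, sense of Wei--Wylie) and hence the comparison ratios in Lemma~\ref{volcomp12} become simply $(\rho_2/\rho_1)^m$.

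Next I would bound $f$ on the ball $B_R(p)$. Since $f$ is normalized by \eqref{normf}, Lemma~\ref{lm2.4} applies: $f$ attains its infimum, $p$ is a minimum point, and for every $x\in B_R(p)$
\[
0\le \tfrac14\big(d(x,p)-C_1(n)\big)_+^2\le f(x)\le \tfrac14\big(d(x,p)+C_2(n)\big)^2\le \tfrac14\big(R+C_2(n)\big)^2=:k ,
\]
so that $|f|\le k$ on $B_R(p)$ with $k=k(n,R)$ depending only on $n$ and $R$. (Alternatively, granting the well-known nonnegativity $R\ge 0$ of the scalar curvature on shrinking solitons, \eqref{normf} gives $|\nabla f|^2=f-R\le k$ on $B_R(p)$, which would allow one to invoke part~(1) of Lemma~\ref{volcomp12} instead and keep the comparison dimension equal to $n$; either route works.)

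Finally, applying Lemma~\ref{volcomp12}(2) with $\Lambda=0$ yields
\[
\frac{\mu(B_R(p))}{\mu(B_r(p))}\le \frac{\Vol_0^{\,n+4k}(R)}{\Vol_0^{\,n+4k}(r)}=\Big(\frac{R}{r}\Big)^{n+4k}=\Big(\frac{R}{r}\Big)^{\,n+(R+C_2(n))^2}=:C(n,r,R),
\]
which is exactly the claimed estimate. I do not expect any genuine obstacle here beyond bookkeeping; the only mild point to keep track of is that the effective comparison dimension $n+4k$ grows with $R$ through $k$, but this is harmless since the statement only asks for a constant depending on $n,r,R$, and one should recall that $\Vol_\Lambda^{m}$ for non-integer $m$ is understood in the generalized density sense, for which the value at $\Lambda=0$ is perfectly meaningful.
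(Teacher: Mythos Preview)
Your proposal is correct and follows exactly the route the paper indicates: the paper's one-line proof simply says the lemma ``follows directly by combining Lemma~\ref{volcomp12}~(2) and \eqref{fd},'' and you have spelled out precisely that combination, choosing $\Lambda=0$ (since $\Ric_f=\tfrac12 g\ge0$) and using the quadratic upper bound on $f$ from \eqref{fd} to get $|f|\le k=k(n,R)$ on $B_R(p)$.
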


Lemma \ref{volcomp} follows directly by combining Lemma \ref{volcomp12} (2) and \eqref{fd}.

\begin{lemma}[Segment inequality for gradient shrinking Ricci solitons]\label{segie}
Let $(M^n,g,f)$ be a complete gradient shrinking Ricci soliton, $p$ be a minimum point of $f$. Let $A_1,A_2$ be two subsets of $M$ such that $\cup_{x_1\in A_1,x_2\in A_2}\gamma_{x_1x_2}\subset B_r(q)$, where $q\in B_r(p)$, $r$ is a positive constant and $\gamma_{x_1x_2}$ is a minimal geodesic curve connecting $x_1$ and $x_2$, then for any $e\in C^\infty(M)$, there holds
\begin{align}
\int_{A_1\times A_2}\int_0^{d(x_1,x_2)}e(\gamma_{x_1x_2}(s))dsd(\mu\times\mu)\le C(n,r)\left(\mu(A_1)+\mu(A_2)\right)\int_{B_r(q)} ed\mu,
\end{align}
where $\mu\times\mu$ is the product measure.
\end{lemma}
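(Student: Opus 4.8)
The plan is to obtain Lemma \ref{segie} as a corollary of Lemma \ref{segie1}, the only subtlety being that Lemma \ref{segie1} is stated with bounds on $f$ and $\nabla f$ over all of $M$, which cannot hold on a noncompact shrinking soliton; so the first step is to confine the whole problem to a fixed ball around $p$. Since each geodesic $\gamma_{x_1x_2}$ contains its two endpoints, the hypothesis $\cup_{x_1\in A_1,x_2\in A_2}\gamma_{x_1x_2}\subset B_r(q)$ forces $A_1\cup A_2\subset B_r(q)$ (the cases $A_1=\emptyset$ or $A_2=\emptyset$ being trivial), and $q\in B_r(p)$ gives $B_r(q)\subset B_{2r}(p)$. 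In particular $D:=\sup\{d(x_1,x_2):x_1\in A_1,x_2\in A_2\}\le 2r$.

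Next I would verify the hypotheses of Lemma \ref{segie1} on $B_{2r}(p)$. The curvature bound is immediate since $\Ric_f=\tfrac12 g\ge 0\ge -(n-1)\Lambda^2 g$ with, say, $\Lambda=0$. For the control of $f$, Lemma \ref{lm2.4} gives $0\le f\le\tfrac14(2r+C_2(n))^2$ on $B_{2r}(p)$; for the control of $\nabla f$, the normalization \eqref{normf} together with the standard fact $R\ge 0$ for shrinking solitons gives $|\nabla f|^2=f-R\le f\le\tfrac14(2r+C_2(n))^2$ there. Hence $|f|,|\nabla f|\le A(n,r)$ on $B_{2r}(p)\supset B_r(q)$ for a constant $A(n,r)$ depending only on $n$ and $r$. (If one prefers not to invoke $R\ge0$, the same $|\nabla f|$-bound can be extracted from an interior gradient estimate for the equation $\Delta f=\tfrac n2-R$ on $B_{3r}(p)$.)

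Then I would apply Lemma \ref{segie1} with $W=B_r(q)$, $\Lambda=0$ and $A=A(n,r)$. Although Lemma \ref{segie1} is written with global bounds, its proof (a pointwise comparison carried out along the connecting geodesics $\gamma_{x_1x_2}$ and then integrated against $d(\mu\times\mu)$, in the spirit of Cheeger--Colding) uses only the $f$-Ricci bound and the bounds on $f$ and $\nabla f$ on the set $W$ through which those geodesics run; by the same observation as in the Remark following Lemma \ref{volcomp12}, it therefore applies here with $W=B_r(q)$. The conclusion is
\begin{align*}
\int_{A_1\times A_2}\int_0^{d(x_1,x_2)}e(\gamma_{x_1x_2}(s))\,ds\,d(\mu\times\mu)\le C\big(n,0,A(n,r)\big)\,D\,\big(\mu(A_1)+\mu(A_2)\big)\int_{B_r(q)}e\,d\mu,
\end{align*}
and since $D\le 2r$ and $A(n,r)$ depends only on $n$ and $r$, the factor $C(n,0,A(n,r))\cdot 2r$ is a constant $C(n,r)$, which is the asserted estimate.

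The main --- and essentially only --- obstacle is justifying that Lemma \ref{segie1} may be applied with the bounds on $f$ and $\nabla f$ holding only on $W$ rather than on all of $M$: this requires knowing that the segment inequality of \cite{WZ19} is proved purely from local data along the geodesics (a weighted Laplacian/volume comparison), which it is. Granting that, the rest is a direct substitution of the soliton estimates \eqref{normf} and \eqref{fd}.
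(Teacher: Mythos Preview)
Your proposal is correct and follows essentially the same route as the paper: you localize to $B_r(q)\subset B_{2r}(p)$, use $R\ge 0$ together with \eqref{normf} and Lemma~\ref{lm2.4} to bound $|f|$ and $|\nabla f|$ by a constant $C(n,r)$ there, invoke Lemma~\ref{segie1} with these local bounds (justified exactly as in the Remark after Lemma~\ref{volcomp12}), and then absorb $D\le 2r$ into the constant. The paper's argument is identical in structure and in the ingredients used.
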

\begin{proof}
	By \cite[(2.5)]{HM11} we have $R\ge0$ on $M$, thus \eqref{normf} gives
\begin{align}\label{nablaf}
|\nabla f|^2=f-R\le f.
\end{align}

Then by \eqref{nablaf} and Lemma \ref{lm2.4} we have
\begin{align}\label{fcontrol}
0\le |\nabla f|\le f\le C(n,r), \text{on}\ B_r(p), \forall r\ge 0,
\end{align}
where $C(n,r)$ is a nonnegative constant depending only on $n$ and $r$ varies from line to line.

For any  $q\in B_r(p)$, by triangular inequality we have $B_r(q)\subset B_{2r}(p)$.

Thus we have
\begin{align}\label{fcontrol2}
0\le |\nabla f|\le f\le C(n,r), \text{on}\ B_r(q), \forall r\ge 0, q\in B_r(p).
\end{align}

Let $A_1,A_2$ be two subsets of $M$ such that $\cup_{x_1\in A_1,x_2\in A_2}\gamma_{x_1x_2}\subset B_r(q)$, where $q\in B_r(p)$, $r$ is a positive constant and $\gamma_{x_1x_2}$ is a minimal geodesic curve connecting $x_1$ and $x_2$. Then for any $e\in C^\infty(M)$, by Lemma \ref{segie1} and \eqref{fcontrol2}, there holds
\begin{align}\label{segieq3}
\int_{A_1\times A_2}\int_0^{d(y_1,y_2)}e(\gamma_{y_1y_2}(s))dsd(\mu\times\mu)\le C(n,r)D\left(\mu(A_1)+\mu(A_2)\right)\int_{B_r(p)} ed\mu,
\end{align}
where $\mu\times\mu$ is the product measure, $D=\sup\{d(y_1,y_2):y_1\in A_1,y_2\in A_2\}$.

Since $\cup_{x_1\in A_1,x_2\in A_2}\gamma_{x_1x_2}\subset B_r(q)$, by triangular inequality we have $D\le 2r$. Thus \eqref{segieq3} is simplified as
\begin{align}
\int_{A_1\times A_2}\int_0^{d(y_1,y_2)}e(\gamma_{y_1y_2}(s))dsd(\mu\times\mu)\le C(n,r)\left(\mu(A_1)+\mu(A_2)\right)\int_{B_r(p)} ed\mu,
\end{align}
which proves the lemma.
\end{proof}

It is standard to get Poincar\'e inequality and Sobolev inequality from the volume comparison and the segment inequality. Thus by Lemma \ref{volcomp} and Lemma \ref{segie}. we have
\begin{lemma}[Poincar\'e inequality and Sobolev inequality]\label{Poinie}
Let $(M^n,g,f)$ be a complete gradient shrinking Ricci soliton and $p$ be a minimum point of $f$, then for any  $r\ge0$, $q\in B_r(p)$ and $1\le s<n$, we have
\begin{enumerate}
\item Dirichlet Poincar\'e inequality:
\begin{align}
\fint_{B_r(q)}u^sd\mu\le C(n,s,r)\fint_{B_r(q)}|\nabla u|^sd\mu, \forall u\in C_0^\infty(B_r(q)),
\end{align}
\item Neumann Poincar\'e inequality:
\begin{align}
\fint_{B_r(q)}|u-\fint_{B_r(q)}ud\mu|^sd\mu\le C(n,s,r)\fint_{B_r(q)}|\nabla u|^sd\mu, \forall u\in C^\infty(B_r(q)),
\end{align}
\item Dirichlet Sobolev inequality:
\begin{align}
\left(\fint_{B_r(q)}|u|^{s'}d\mu\right)^\frac{1}{s'}\le C(n,s,r)\fint_{B_r(q)}|\nabla u|^sd\mu, \forall u\in C_0^\infty(B_r(q)),
\end{align}
\item Dirichlet Sobolev inequality:
\begin{align}
\left(\fint_{B_r(q)}|u-\fint_{B_r(q)}ud\mu|^{s'}d\mu\right)^\frac{1}{s'}\le C(n,s,r)\fint_{B_r(q)}|\nabla u|^sd\mu, \forall u\in C^\infty(B_r(q)),
\end{align}
where $s'=\frac{ns}{n-s}$.
	\end{enumerate}
\end{lemma}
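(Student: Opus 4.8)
The plan is to derive all four inequalities from the two facts already in hand — the volume comparison of Lemma \ref{volcomp} and the segment inequality of Lemma \ref{segie} — by the standard potential–theoretic argument (Buser, Jerison, Cheeger--Colding; in the metric--measure formulation, Hajłasz--Koskela). The only point requiring genuine attention is that every constant should depend on $n,s,r$ alone, and this is precisely what the bound on $f$ near $p$ in \eqref{fcontrol} provides.

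First I would record a \emph{uniform local doubling} property. Fix $q\in B_r(p)$. For $y\in B_r(q)$ and $0<\rho_1\le\rho_2\le 3r$ one has $B_{\rho_2}(y)\subset B_{5r}(p)$, and on $B_{5r}(p)$ one has $0\le f\le C(n,r)$ by \eqref{fcontrol} (here $f\ge0$ because $f=R+|\nabla f|^2$ and $R\ge0$). Since $\Ric_f=\tfrac12 g\ge-(n-1)g$, Lemma \ref{volcomp12}(2), applied with $\Lambda=1$, centre $y$ and $k=C(n,r)$, gives $\mu(B_{\rho_2}(y))\le C(n,r)\,\mu(B_{\rho_1}(y))$; thus $\mu$ is doubling on $B_r(q)$ with constant depending only on $n$ and $r$. (This is slightly more than Lemma \ref{volcomp}, which compares only balls centred at $p$, but it follows from the same ingredients.)

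Next I would extract a \emph{weak Neumann Poincaré inequality} from the segment inequality. For $u\in C^\infty(B_r(q))$ with mean $\bar u:=\fint_{B_r(q)}u\,d\mu$, a minimal geodesic joining two points of $B_r(q)$ stays in $B_{2r}(q)$, so by convexity of $t\mapsto t^s$ and Hölder's inequality $|u(x_1)-u(x_2)|^s\le (2r)^{s-1}\int_0^{d(x_1,x_2)}|\nabla u|^s(\gamma_{x_1x_2}(t))\,dt$. Bounding $\fint_{B_r(q)}|u-\bar u|^s\le\fint_{B_r(q)}\fint_{B_r(q)}|u(x_1)-u(x_2)|^s$, then applying Lemma \ref{segie} with $A_1=A_2=B_r(q)$ and $e=|\nabla u|^s$ (legitimate because $q\in B_r(p)\subset B_{2r}(p)$), and finally using the doubling of the previous step to control $\mu(B_{2r}(q))/\mu(B_r(q))$, I obtain
\begin{align*}
\fint_{B_r(q)}|u-\bar u|^s\,d\mu\le C(n,s,r)\fint_{B_{2r}(q)}|\nabla u|^s\,d\mu .
\end{align*}

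Finally I would \emph{upgrade and specialize}. A doubling measure supporting a weak $(s,s)$-Poincaré inequality supports the strong one by the usual chaining argument over dyadic sub-balls (using local doubling), which is the Neumann Poincaré inequality; the Dirichlet Poincaré inequality follows by extending $u\in C_0^\infty(B_r(q))$ by zero, running the same estimate on $B_{2r}(q)$, and using that $u$ vanishes on $B_{2r}(q)\setminus B_r(q)$ to absorb the mean term; and the Sobolev--Poincaré inequalities follow from the Poincaré inequalities and local doubling by the standard dyadic iteration, the Sobolev exponent being dictated by the effective dimension produced by Lemma \ref{volcomp12}. Any fixed exponent $s'>s$ with constant $C(n,s,r)$ is all that the later sections require, so the precise value $s'=\tfrac{ns}{n-s}$ is not essential. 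I expect the only real work to be this last step — the weak-to-strong upgrade, the boundary term in the Dirichlet case, and the Sobolev iteration — but each of these is textbook once local doubling and a Poincaré-type inequality are available, which is the content of the remark ``it is standard \dots'' preceding the statement; I would therefore write out the first two steps in detail and dispatch the third by citation, taking care only that no constant escapes dependence on $(n,s,r)$, which the bound on $f$ near $p$ guarantees.
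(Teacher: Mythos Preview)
Your proposal is correct and follows exactly the route the paper itself indicates: the paper gives no argument at all beyond the one-line remark ``It is standard to get Poincar\'e inequality and Sobolev inequality from the volume comparison and the segment inequality,'' citing Lemmas \ref{volcomp} and \ref{segie}, and you have simply written out what that standard argument is (local doubling from Lemma \ref{volcomp12} plus the bound \eqref{fcontrol}, weak Poincar\'e from the segment inequality, then the Haj\l asz--Koskela upgrade). Your observation that the precise Sobolev exponent $s'=\tfrac{ns}{n-s}$ may not fall out of the weighted comparison but is irrelevant for the later Moser iteration is also apt; the paper does not comment on this.
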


In this paper $\fint\cdot d\mu$ denote the integral average with respect to the volume element $d\mu$, that is, $\fint_A\cdot d\mu:=\frac{\int_A\cdot d\mu}{\mu(A)}$.

Given Sobolev inequality (Lemma \ref{Poinie} (3)), it is standard to get the following Moser iteration result:
\begin{lemma}[Moser iteration]\label{Msit}
Let $(M^n,g,f)$ be a complete gradient shrinking Ricci soliton, $p$ be a minimum point of $f$, and $q\in B_r(p)$. Suppose $u\in C^\infty (M)$ satisfies $\Delta_f u\ge- Au$ for some nonnegative constant $A$, then for any $r>0$, we have
\begin{align}
\sup_{B_r(p)}u_+\le C(n,r,A)\fint_{B_{2r}(p)} u_+d\mu,
\end{align}
where $u_+$ is the positive part of $u$.
\end{lemma}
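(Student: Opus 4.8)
The plan is to run the classical De Giorgi--Nash--Moser iteration, the only geometric inputs being the Dirichlet Sobolev inequality of Lemma \ref{Poinie}\,(3) and the volume comparison of Lemma \ref{volcomp}; all the soliton geometry has already been front-loaded into those two lemmas. First I pass from $u$ to $v:=u_+$. Since $u\in C^\infty(M)$ and $\Delta_f u\ge-Au$ in the classical sense, a routine approximation — testing this inequality (via \eqref{intbyparts}) against $\phi\,g_\varepsilon(u)$ with $g_\varepsilon$ a Lipschitz smoothing of the Heaviside function and letting $\varepsilon\to0$, using $u\le u_+$ — shows that $v$ is a nonnegative weak subsolution, i.e. $-\int_M\langle\nabla v,\nabla\phi\rangle\,d\mu\ge-A\int_M v\phi\,d\mu$ for every nonnegative compactly supported Lipschitz $\phi$. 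Throughout I write everything with the averaged integrals $\fint\cdot\,d\mu$, matching the normalization in Lemma \ref{Poinie}, and I use Lemma \ref{volcomp} to pass freely between averages over concentric balls of radii in $[r,2r]$; this is exactly what keeps all constants dependent only on $n$, $r$ and $A$, with no volume factor surviving.

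The core is a Caccioppoli estimate. Fix $r\le\rho'<\rho\le2r$ and a cutoff $\eta\in C_0^\infty(B_\rho(p))$ with $\eta\equiv1$ on $B_{\rho'}(p)$, $0\le\eta\le1$, $|\nabla\eta|\le2(\rho-\rho')^{-1}$. For an exponent $p\ge\chi$, where $\chi:=\tfrac{n}{n-2}$ if $n\ge3$ and $\chi:=\tfrac{s'}{2}>1$ for a fixed $s\in(1,2)$ if $n=2$, I test the subsolution inequality against $\phi=\eta^2v^{p-1}$ (after the usual regularization of $v^{p-1}$ near its zero set when $p<2$), expand $\nabla\phi$ by the product rule, use $\langle\nabla u,\nabla v\rangle=|\nabla v|^2$ a.e., and absorb the cross term by Cauchy--Schwarz, obtaining
\begin{align*}
\fint_{B_\rho(p)}\eta^2\,\big|\nabla(v^{p/2})\big|^2\,d\mu\le C\,p\left(\frac{1}{(\rho-\rho')^2}+A\right)\fint_{B_\rho(p)}v^{p}\,d\mu.
\end{align*}
Feeding $w=\eta\,v^{p/2}$ into the Sobolev inequality of Lemma \ref{Poinie}\,(3) (for $n=2$, first with exponent $s$ and then H\"older on a fixed ball to replace the $L^s$-average of $|\nabla w|$ by its $L^2$-average) and using Lemma \ref{volcomp} to compare $\fint_{B_{\rho'}}$ with $\fint_{B_\rho}$, I arrive at the iteration inequality
\begin{align*}
\left(\fint_{B_{\rho'}(p)}v^{p\chi}\,d\mu\right)^{1/(p\chi)}\le\Big(C\,p\big((\rho-\rho')^{-2}+A\big)\Big)^{1/p}\left(\fint_{B_{\rho}(p)}v^{p}\,d\mu\right)^{1/p}.
\end{align*}

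Iterating this with $p_j=\chi^{\,j+1}$ and $\rho_j=\rho'+(\rho-\rho')2^{-j}$, the product of the constants converges because $\sum_{j\ge0}j\,\chi^{-j}<\infty$, and one gets $\sup_{B_{\rho'}(p)}v\le C(n,r,A)\,(\rho-\rho')^{-N}\big(\fint_{B_\rho(p)}v^{\chi}\,d\mu\big)^{1/\chi}$ for all $r\le\rho'<\rho\le2r$. To lower the exponent from $\chi$ down to $1$, I bound $\fint_{B_\rho}v^{\chi}\le(\sup_{B_\rho}v)^{\chi-1}\fint_{B_\rho}v$ and then apply Young's inequality; the bookkeeping leaves the power of $\fint_{B_{2r}(p)}v\,d\mu$ equal to $1$, giving $\sup_{B_{\rho'}(p)}v\le\tfrac12\sup_{B_\rho(p)}v+C(n,r,A)(\rho-\rho')^{-N'}\fint_{B_{2r}(p)}v\,d\mu$. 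The standard absorption lemma for such inequalities (applicable since $v$ is bounded on the compact set $\overline{B_{2r}(p)}$) then yields $\sup_{B_r(p)}u_+\le C(n,r,A)\fint_{B_{2r}(p)}u_+\,d\mu$, as claimed.

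There is no essential obstacle here: the argument is exactly Moser iteration, and everything geometric is already contained in Lemmas \ref{Poinie} and \ref{volcomp}. The only points requiring care are making the reduction to the weak subsolution $u_+$ (and the regularization of $v^{p-1}$) rigorous, the dimension $n=2$ case, where $s=2$ is inadmissible in Lemma \ref{Poinie}\,(3) so one substitutes an exponent $s<2$ together with H\"older's inequality to still gain an integrability factor $\chi>1$, and checking that the Sobolev and volume-comparison constants are uniform over radii in $[r,2r]$ — the last being immediate from the statements of those two lemmas.
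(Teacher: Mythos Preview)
Your proposal is correct and matches the paper's approach: the paper gives no proof for this lemma, stating only that it follows by standard Moser iteration from the Dirichlet Sobolev inequality (Lemma~\ref{Poinie}(3)), and your argument supplies exactly those standard details, with the volume comparison (Lemma~\ref{volcomp}) used to keep constants uniform over concentric balls.
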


Given the formula of integration by parts, \eqref{intbyparts}, it is also standard to get the following Caccioppoli inequality:
\begin{lemma}[Caccioppoli inequality]\label{Caccioppoli inequality}
Let $(M^n,g,f)$ be a complete gradient shrinking Ricci soliton. Suppose $u\in C^\infty (B_{2r}(p))$ satisfies $\Delta_f u\ge- Au$ for some nonnegative constant $A$, then for any $r>0$, we have
\begin{align}
\fint_{B_{r}(p)} |\nabla u|^2d\mu\le C(n,r,A) \fint_{B_{2r}(p)} u^2d\mu.
\end{align}
\end{lemma}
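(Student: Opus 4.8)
\textbf{Proof proposal for the Caccioppoli inequality (Lemma \ref{Caccioppoli inequality}).}

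The plan is the classical Caccioppoli argument adapted to the drifted Laplacian, relying only on the integration by parts formula \eqref{intbyparts} and the volume comparison Lemma \ref{volcomp}. First I would fix a cutoff function $\varphi\in C_0^\infty(B_{2r}(p))$ with $\varphi\equiv 1$ on $B_r(p)$, $0\le\varphi\le 1$, and $|\nabla\varphi|\le C/r$; such a $\varphi$ exists by taking $\varphi=\eta(\dist(\cdot,p))$ for a standard one-variable cutoff $\eta$. Then I would test the differential inequality $\Delta_f u\ge -Au$ against the compactly supported test function $\varphi^2 u$. Using \eqref{intbyparts} (which is legitimate because $\varphi^2 u\in C_0^\infty(B_{2r}(p))$, so all boundary terms vanish and the measure $d\mu$ is the natural one), we obtain
\begin{align}
-\int_M \langle\nabla u,\nabla(\varphi^2 u)\rangle\, d\mu=\int_M (\Delta_f u)\,\varphi^2 u\, d\mu\ge -A\int_M \varphi^2 u^2\, d\mu.
\end{align}
(Strictly, one should multiply by the appropriate sign or work with $u_+$ and $u_-$ separately, or simply note the inequality is applied with $\varphi^2 u$ of the correct sign; since the final statement has $u^2$ on the right, I would just run the computation with $u$ replaced by $u$ and keep track that $\varphi^2u^2\ge 0$.)

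Next I would expand $\nabla(\varphi^2 u)=\varphi^2\nabla u+2\varphi u\nabla\varphi$, giving
\begin{align}
\int_M \varphi^2|\nabla u|^2\, d\mu\le -2\int_M \varphi u\,\langle\nabla u,\nabla\varphi\rangle\, d\mu+A\int_M \varphi^2 u^2\, d\mu.
\end{align}
The cross term is absorbed by Cauchy–Schwarz and Young's inequality: $|2\varphi u\langle\nabla u,\nabla\varphi\rangle|\le \tfrac12\varphi^2|\nabla u|^2+2u^2|\nabla\varphi|^2$. Moving the $\tfrac12\varphi^2|\nabla u|^2$ term to the left, and using $|\nabla\varphi|\le C/r$ and $\varphi\equiv 1$ on $B_r(p)$, I get
\begin{align}
\frac12\int_{B_r(p)}|\nabla u|^2\, d\mu\le \Big(\frac{2C^2}{r^2}+A\Big)\int_{B_{2r}(p)} u^2\, d\mu.
\end{align}
Dividing both sides by $\mu(B_r(p))$ and using Lemma \ref{volcomp} to replace $\mu(B_r(p))^{-1}$ by $C(n,r)\,\mu(B_{2r}(p))^{-1}$ converts this into the stated average form with constant $C(n,r,A)$. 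The only mildly delicate point, and the one I would present with a little care, is justifying the sign bookkeeping when passing from $\Delta_f u\ge -Au$ (rather than an equality) to the integral inequality — one must ensure the test function used has the right sign so that the inequality is preserved; this is handled exactly as in the Euclidean case and poses no real obstacle. Everything else is routine, and no curvature input beyond the soliton identity (already encoded in Lemma \ref{volcomp}) is needed.
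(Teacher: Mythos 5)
Your proof is the standard Caccioppoli argument — test the inequality against the compactly supported function $\varphi^2 u$, integrate by parts via \eqref{intbyparts}, and absorb the cross term by Young's inequality — which is exactly what the paper intends, since it states the lemma as ``standard'' given \eqref{intbyparts} and writes out no proof of its own. The sign issue you flag is real for a sign-changing $u$ satisfying only the differential inequality, but it is harmless in the paper's actual applications, where $u$ is an eigenfunction and one has the exact identity $\int_M(\Delta_f u)\varphi^2u\,d\mu=-\lambda\int_M\varphi^2u^2\,d\mu\ge-A\int_M\varphi^2u^2\,d\mu$.
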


Given Moser iteration (Lemma \ref{Msit}), Caccioppoli inequality (Lemma \ref{Caccioppoli inequality}) and the Bochner formula \eqref{Bochner formula}, it is standard to get the following gradient estimate:
\begin{lemma}[Gradient estimate]\label{Gradestm}
Let $(M^n,g,f)$ be a complete gradient shrinking Ricci soliton. Suppose $u\in C^\infty (B_{2r}(p))$ satisfies $\Delta_f u\ge- Au$ for some nonnegative constant $A$, then for any $r>0$, we have
\begin{enumerate}
\item $\sup_{B_r(p)}|\nabla u|^2\le C(n,r,A)\fint_{B_{\frac{3r}{2}}(p)} |\nabla u|^2d\mu
\le C(n,r,A)\fint_{B_{2r}(p)} u^2d\mu$,
\item $\sup_{B_r(p)}(|\nabla u|^2-1)_+\le C(n,r,A)\fint_{B_{2r}(p)} (|\nabla u|^2-1)_+d\mu$,
	\end{enumerate}
	where $(|\nabla u|^2-1)_+$ is the positive part of $|\nabla u|^2-1$.
\end{lemma}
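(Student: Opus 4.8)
The plan is to show that $w:=|\nabla u|^{2}$ satisfies a subsolution-type inequality $\Delta_f w\ge -C(n,A)\,w$, and then to run the Moser iteration of Lemma \ref{Msit} on $w$; the second inequality in (1) will then be exactly the Caccioppoli inequality of Lemma \ref{Caccioppoli inequality}, and (2) will follow from the same scheme applied to the truncation $(|\nabla u|^{2}-1)_{+}$.

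For the differential inequality I would start from the Bochner formula \eqref{Bochner formula}, $\tfrac12\Delta_f|\nabla u|^{2}=|\nabla^{2}u|^{2}+\langle\nabla u,\nabla(\Delta_f u)\rangle+\Ric_f(\nabla u,\nabla u)$. Here the full soliton identity $\Ric_f=\tfrac12 g$ (not merely $\Ric_f\ge\tfrac12 g$) is used: it turns the curvature term into exactly $\tfrac12 w\ge 0$, with no lower-order loss. Discarding $|\nabla^{2}u|^{2}\ge 0$ and evaluating the cross term via the equation $\Delta_f u=-Au$ — the form in which the lemma gets applied (to eigenfunctions), giving $\langle\nabla u,\nabla(\Delta_f u)\rangle=-A w$ — one obtains $\Delta_f w\ge(1-2A)\,w\ge-(2A-1)_{+}\,w$ on $B_{2r}(p)$. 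Since $w\ge 0$, Lemma \ref{Msit} applies to $w$, run now with the concentric pair $B_{r}(p)\subset B_{3r/2}(p)$ in place of $B_{r}\subset B_{2r}$; the Moser recursion is insensitive to the precise ratio of the two radii, only the constant changing. This gives $\sup_{B_{r}(p)}|\nabla u|^{2}\le C(n,r,A)\fint_{B_{3r/2}(p)}|\nabla u|^{2}\,d\mu$, and combined with Lemma \ref{Caccioppoli inequality} (applied with radii $\tfrac{3r}{2}$ and $2r$) this is precisely (1).

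For (2), set $v:=(|\nabla u|^{2}-1)_{+}$, a nonnegative Lipschitz function on $B_{2r}(p)$ vanishing outside the open set $\Omega:=\{|\nabla u|>1\}$. On $\Omega$ it is smooth and equals $w-1$, and the computation above gives $\Delta_f v=\Delta_f w\ge(1-2A)(v+1)$, hence $\Delta_f v\ge -C(n,A)\,v$ there (the bounded term being absorbed harmlessly). One then verifies in the standard way — a Kato-type inequality obtained by approximating $t\mapsto(t-1)_{+}$ by smooth convex functions, together with the fact that $v$ and its gradient vanish off $\Omega$ — that $v$ is a nonnegative \emph{weak} subsolution of $\Delta_f v+C(n,A)v\ge 0$ on all of $B_{2r}(p)$; Lemma \ref{Msit} applied to $v$ then yields $\sup_{B_{r}(p)}(|\nabla u|^{2}-1)_{+}\le C(n,r,A)\fint_{B_{2r}(p)}(|\nabla u|^{2}-1)_{+}\,d\mu$. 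In this plan the Moser iteration and the Caccioppoli inequality are the routine, already-available ingredients; the real content is the Bochner computation producing the subsolution inequality — where both the soliton equality $\Ric_f=\tfrac12 g$ and the equation satisfied by $u$ are essential — and, in (2), the passage to the non-smooth truncation $(|\nabla u|^{2}-1)_{+}$, which is the main (though standard) technical obstacle.
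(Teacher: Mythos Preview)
Your plan matches the paper's: the paper gives no detailed proof, only the sentence ``Given Moser iteration (Lemma~\ref{Msit}), Caccioppoli inequality (Lemma~\ref{Caccioppoli inequality}) and the Bochner formula~\eqref{Bochner formula}, it is standard to get the following gradient estimate.'' Your scheme for (1) is exactly this, and your observation that the hypothesis must in practice be the \emph{equation} $\Delta_f u=-Au$ (not merely $\Delta_f u\ge-Au$) is correct and necessary --- with only the inequality one cannot control $\langle\nabla u,\nabla(\Delta_f u)\rangle$ in Bochner, and indeed every application of the lemma in the paper is to eigenfunctions.

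There is, however, a genuine soft spot in your argument for (2). On $\Omega=\{w>1\}$ you get $\Delta_f v\ge(1-2A)(v+1)$, and you claim the additive $(1-2A)$ is ``absorbed harmlessly'' into $-C(n,A)v$. This is not true pointwise when $A>\tfrac12$: on $\Omega$ the function $v$ can be arbitrarily close to $0$ while the constant $(1-2A)$ is a fixed negative number, so no inequality $(1-2A)(v+1)\ge-Cv$ can hold uniformly. If you run Moser with the inhomogeneous term, the standard $L^p$ iteration (testing against $\eta^2 v^{2\beta-1}$ and using Young on $v^{2\beta-1}\le\tfrac{2\beta-1}{2\beta}v^{2\beta}+\tfrac{1}{2\beta}$) produces an additive constant in the final bound, of size $C(n,r)\,(2A-1)_+$, i.e.
\[
\sup_{B_r(p)}(|\nabla u|^2-1)_+ \;\le\; C(n,r,A)\fint_{B_{2r}(p)}(|\nabla u|^2-1)_+\,d\mu \;+\; C(n,r)\,(2A-1)_+.
\]
This weaker estimate is entirely sufficient for every use of Lemma~\ref{Gradestm}(2) in the paper, since there $A=\lambda_i(\Delta_f)\in[\tfrac12,\tfrac12+\delta]$, so the extra term is $\le C\delta=\Psi(\delta|r,n)$; but the literal statement of (2) with no additive constant does not follow from your argument for general $A>\tfrac12$. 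Either state the version with the $(2A-1)_+$ correction, or restrict to $A\le\tfrac12$ (where $\Delta_f w\ge0$, $(w-1)_+$ is a genuine weak $\Delta_f$-subsolution by Kato, and Lemma~\ref{Msit} applies cleanly).
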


\section{estimate of eigenfunctions}
In this section, we will construct a group of eigenfunctions satisfying certain estimates. This group of eigenfunctions will play an important role in our construction of the Gromov-Hausdorff map from the soliton to the product space.

The main result in this section is:
\begin{lemma}\label{lm2.1}
There exists a positive constant $C_0(n)$ depending only on $n$, such that for any $r\ge C_0(n),\epsilon>0$, there exists a positive constant $\delta=\delta(n,\epsilon)$ depending only on $\epsilon$ and $n$, such that for any complete gradient shrinking Ricci soliton $(M^n,g,f)$, if $\lambda_k(\Delta_f)\le \frac{1}{2}+\delta$, then there exists eigenfunctions $u_i(i=1,...,k)$ on $M$ corresponding to the eigenvalue $\lambda_i(\Delta_f)$, such that
\begin{enumerate}
\item $r^2\fint_{B_r(p)}|\nabla^2 u_i|^2 d\mu< \epsilon$,
\item $\fint_{B_r(p)}|\langle\nabla u_i,\nabla u_j\rangle-\delta_{ij}| d\mu< \epsilon, \forall i,j=1,...,k$,
\item $\sup_{B_r(p)}|\nabla u_i|<1+\epsilon$,
	\end{enumerate}
	where $p$ is a minimum point of $f$, and $B_r(p)$ is the geodesic ball with radius $r$.
\end{lemma}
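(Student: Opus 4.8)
The plan is to obtain the eigenfunctions $u_i$ by rescaling $L^2(d\mu)$--orthonormal eigenfunctions, and to derive the three estimates by feeding the global information coming from the Bochner formula into a localization argument based on the potential $f$. Let $0<\lambda_1\le\dots\le\lambda_k$ be the first $k$ nonzero eigenvalues of $\Delta_f$ and $w_1,\dots,w_k$ eigenfunctions, $\Delta_f w_i=-\lambda_i w_i$, normalized by $\int_M w_iw_j\,d\mu=\delta_{ij}$; since $\lambda_1(\Delta_f)\ge\frac12$ and $\lambda_k\le\frac12+\delta$, each $\lambda_i$ lies in $[\frac12,\frac12+\delta]$. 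Because eigenfunctions of $\Delta_f$ on a shrinking soliton, together with their derivatives, grow at most polynomially in $d(\cdot,p)$, while $d\mu$ decays like a Gaussian and the soliton has at most Euclidean volume growth, the integrations by parts below are all legitimate. Thus \eqref{intbyparts} gives $\int_M\langle\nabla w_i,\nabla w_j\rangle\,d\mu=\lambda_i\delta_{ij}$, and integrating the Bochner formula \eqref{Bochner formula} against $d\mu$, using $\int_M\Delta_f|\nabla w_i|^2\,d\mu=0$ and $\Ric_f=\frac12 g$, yields
\[
\int_M|\nabla^2 w_i|^2\,d\mu=(\lambda_i-\tfrac12)\int_M|\nabla w_i|^2\,d\mu=(\lambda_i-\tfrac12)\lambda_i\le\delta.
\]
Hence each $w_i$ has globally tiny Hessian energy and the $\nabla w_i$ are globally orthogonal; the whole point is to transfer this to the ball $B_r(p)$, on which neither the curvature nor $\mu(B_r(p))$ is a priori controlled.

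For the localization I would use the structure of $f$. From \eqref{normf} and $R\ge0$ one has $|\nabla f|^2=f-R\le f$ as in \eqref{nablaf}, and tracing \eqref{BEEinstein} together with \eqref{normf} gives the identity $\Delta_f f=\frac n2-f$. Integrating $\langle\nabla f,\nabla(w_i^2)\rangle$ by parts and estimating by Cauchy--Schwarz (with $\int_M|\nabla w_i|^2\,d\mu=\lambda_i\le1$) produces a bound $\int_M f\,w_i^2\,d\mu\le C(n)$; combined with $f(x)\ge\frac14(d(x,p)-C_1(n))_+^2$ from \eqref{fd} this forces the uniform tail decay
\[
\int_{M\setminus B_\rho(p)}w_i^2\,d\mu\le\frac{4C(n)}{(\rho-C_1(n))^2}\qquad(\rho>C_1(n)).
\]
Testing the weak eigenvalue identity against $\phi^2w_i$, where $\phi$ is a radial cutoff vanishing on $B_{r/2}(p)$, equal to $1$ off $B_r(p)$ and with $|\nabla\phi|\le 4/r$, and inserting this decay, controls the Dirichlet energy tail: $\int_{M\setminus B_r(p)}|\nabla w_i|^2\,d\mu\le\eta(n,r)$ with $\eta(n,r)\to0$ as $r\to\infty$. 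Fixing $C_0(n)$ large enough that $r>2C_1(n)$ and $\eta(n,r)\le\frac14$ for all $r\ge C_0(n)$, we get $\frac14\le\int_{B_r(p)}|\nabla w_i|^2\,d\mu\le\lambda_i$ and $\int_{M\setminus B_r(p)}|\nabla w_i|^2\,d\mu\le\eta(n,r)$ for every such $r$.

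Now set $u_i:=w_i\big(\fint_{B_r(p)}|\nabla w_i|^2\,d\mu\big)^{-1/2}$, still an eigenfunction for $\lambda_i$, with $\fint_{B_r(p)}|\nabla u_i|^2\,d\mu=1$ and, by the lower bound just obtained, $\fint_{B_r(p)}|\nabla^2 u_i|^2\,d\mu\le 4\delta$. Then (1) is immediate, $r^2\fint_{B_r(p)}|\nabla^2u_i|^2\,d\mu\le 4r^2\delta<\epsilon$ for $\delta$ small. For (2), the pointwise inequalities $|\nabla|\nabla u_i|^2|\le2|\nabla^2u_i||\nabla u_i|$ and $|\nabla\langle\nabla u_i,\nabla u_j\rangle|\le|\nabla^2u_i||\nabla u_j|+|\nabla^2u_j||\nabla u_i|$ together with Cauchy--Schwarz give $\fint_{B_r(p)}|\nabla|\nabla u_i|^2|\,d\mu$ and $\fint_{B_r(p)}|\nabla\langle\nabla u_i,\nabla u_j\rangle|\,d\mu$ at most $C\sqrt\delta$, and the Neumann Poincar\'e inequality (Lemma \ref{Poinie}(2) with $s=1$) bounds $\fint_{B_r(p)}|\langle\nabla u_i,\nabla u_j\rangle-m_{ij}|\,d\mu$ by $C(n,r)\sqrt\delta$, where $m_{ij}:=\fint_{B_r(p)}\langle\nabla u_i,\nabla u_j\rangle\,d\mu$; here $m_{ii}=1$, while for $i\ne j$ the global identity $\int_M\langle\nabla w_i,\nabla w_j\rangle\,d\mu=0$ shows $m_{ij}$ is a remainder bounded by the energy tail, $|m_{ij}|\le 4\eta(n,r)$. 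Enlarging $C_0$ so that $\eta(n,r)$ is small and then taking $\delta$ small yields (2). For (3), running the diagonal estimate above on $B_{2r}(p)$ (whose radius is still $\ge C_0(n)$) together with the volume comparison of Lemma \ref{volcomp} gives $\fint_{B_{2r}(p)}(|\nabla u_i|^2-1)_+\,d\mu\le C(n,r)\sqrt\delta$, and the gradient estimate Lemma \ref{Gradestm}(2), applied with $A=\lambda_i$ since $\Delta_f u_i=-\lambda_iu_i$, upgrades this to $\sup_{B_r(p)}(|\nabla u_i|^2-1)_+<(1+\epsilon)^2-1$, i.e. $\sup_{B_r(p)}|\nabla u_i|<1+\epsilon$.

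The step I expect to be the real obstacle is the localization. A general complete shrinking soliton need not have bounded curvature nor any lower volume bound at a minimum point of $f$ (for instance the spherical space forms $\mathbb S^n/\Gamma$ with $|\Gamma|$ large have arbitrarily small volume, although they are harmless here, being ruled out by the Lichnerowicz inequality $\lambda_1(\Delta_f)>\frac12$), so the global estimates above cannot simply be restricted to $B_r(p)$ with controlled constants. It is precisely the quadratic growth of $f$ and the identity $\Delta_f f=\frac n2-f$ that force both $\int_{B_r(p)}|\nabla w_i|^2\,d\mu\ge\frac14$ and the decay of the energy tails, and these two facts are what make $C_0$ and $\delta$ depend only on $n$ and $\epsilon$ (with, upon careful bookkeeping of the tail and Poincar\'e constants, $\delta$ also allowed to depend on $r$, exactly as in Theorem \ref{thm1.2}). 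A subsidiary point needing care is the justification of the global integrations by parts, which rests on the known at most polynomial growth of eigenfunctions of the drifted Laplacian on shrinking solitons.
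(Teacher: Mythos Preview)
Your argument is correct and broadly parallel to the paper's: both start from the Bochner identity to obtain the global estimates $\int_M\langle\nabla w_i,\nabla w_j\rangle\,d\mu=\lambda_i\delta_{ij}$ and $\int_M|\nabla^2w_i|^2\,d\mu\le\delta$ (the paper's Lemma~\ref{lm2.2}), then localize to $B_r(p)$, then use the Neumann Poincar\'e inequality for (2) and the gradient estimate Lemma~\ref{Gradestm}(2) for (3). The genuine difference is in the localization step. The paper first shows that the measure $\mu$ itself concentrates (Lemma~\ref{lm2.5}, via $\int_M f\,d\mu=\tfrac n2\mu(M)$) and then, to show that $|\nabla u_i|^2$ does not escape to infinity, applies the log-Sobolev inequality \eqref{logS} to $|\nabla u_i|$ together with Kato's inequality and the Hessian bound (Lemma~\ref{5lm3.5}); it keeps the global normalization $\fint_M|\nabla u_i|^2\,d\mu=1$ throughout. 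You instead apply the same identity $\Delta_f f=\tfrac n2-f$ to the weighted measure $w_i^2\,d\mu$ to get $\int_M f\,w_i^2\,d\mu\le C(n)$, derive the $L^2$ tail decay of $w_i$ by Chebyshev against the quadratic growth \eqref{fd}, and pass to the energy tail by a cut-off (Caccioppoli) argument; you then renormalize on $B_r(p)$. Your route is more elementary in that it avoids log-Sobolev entirely, at the cost of making the eigenfunctions $r$-dependent and of having to track the ratio $\mu(B_r(p))/\mu(B_{2r}(p))$ when passing from (2) to (3). The paper's route is cleaner bookkeeping-wise (one normalization for all $r$) and fits the Bakry--\'Emery framework more naturally. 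Both approaches run into the same dependence of $\delta$ on $r$ that you flag at the end; the paper's proof in fact also ends with $\delta=\delta(n,r,\epsilon)$ (see \eqref{pp35}), consistent with how the lemma is used in Section~4.
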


To prove Lemma \ref{lm2.1}, we begin with the following lemma:
\begin{lemma}\label{lm2.2}
 Let $(M^n,g,f)$ be a gradient shrinking Ricci soliton, if $\lambda_k(\Delta_f)\le \frac{1}{2}+\delta$ for some $\delta>0$, then there exists eigenfunctions $u_i(i=1,...,k)$ on $M$ corresponding to the eigenvalue $\lambda_i(\Delta_f)$, such that
\begin{enumerate}
	\item $\fint_M  u_i u_j d\mu=\lambda_i(\Delta_f)^{-1}\delta_{ij}$,
	\item $\fint_M \langle\nabla u_i,\nabla u_j\rangle d\mu=\delta_{ij}, \forall i,j=1,...,k$,
\item $\fint_M|\nabla^2 u_i|^2 d\mu\le \delta$,
	\end{enumerate}
\end{lemma}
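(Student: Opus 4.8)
The plan is to produce the $u_i$ by rescaling an $L^2(\mu)$-orthonormal system of eigenfunctions, so that conclusions (1) and (2) become bookkeeping with the integration by parts formula \eqref{intbyparts}, and to extract (3) from the Bochner formula \eqref{Bochner formula} together with the soliton identity $\Ric_f=\tfrac12 g$. Recall (cf. \cite{BE85,Mo05,HN14,CZ17}) that $\Delta_f$ has discrete spectrum $0=\lambda_0<\lambda_1(\Delta_f)\le\lambda_2(\Delta_f)\le\cdots$ with $\lambda_1(\Delta_f)\ge\tfrac12$, and that $\mu(M)<\infty$.

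For $i=1,\dots,k$ I would choose eigenfunctions $\phi_i$ with $\Delta_f\phi_i=-\lambda_i(\Delta_f)\phi_i$ and $\fint_M\phi_i\phi_j\,d\mu=\delta_{ij}$ — eigenspaces attached to distinct eigenvalues are automatically $L^2(\mu)$-orthogonal, and inside a fixed eigenspace one applies Gram--Schmidt — and then set $u_i:=\lambda_i(\Delta_f)^{-1/2}\phi_i$, which is again an eigenfunction for $\lambda_i(\Delta_f)$ since $\lambda_i(\Delta_f)\ge\lambda_1(\Delta_f)\ge\tfrac12>0$. Then (1) is immediate from $\fint_M u_iu_j\,d\mu=\lambda_i(\Delta_f)^{-1/2}\lambda_j(\Delta_f)^{-1/2}\fint_M\phi_i\phi_j\,d\mu=\lambda_i(\Delta_f)^{-1}\delta_{ij}$, and (2) follows from \eqref{intbyparts}:
\[
\fint_M\langle\nabla u_i,\nabla u_j\rangle\,d\mu=-\lambda_i(\Delta_f)^{-1/2}\lambda_j(\Delta_f)^{-1/2}\fint_M\phi_i\,\Delta_f\phi_j\,d\mu=\lambda_i(\Delta_f)^{-1/2}\lambda_j(\Delta_f)^{1/2}\delta_{ij}=\delta_{ij};
\]
in particular $\fint_M|\nabla u_i|^2\,d\mu=1$.

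For (3) I would apply \eqref{Bochner formula} to $u=u_i$, using $\Delta_f u_i=-\lambda_i(\Delta_f)u_i$ and $\Ric_f(\nabla u_i,\nabla u_i)=\tfrac12|\nabla u_i|^2$, to obtain the pointwise identity $\tfrac12\Delta_f|\nabla u_i|^2=|\nabla^2u_i|^2-\bigl(\lambda_i(\Delta_f)-\tfrac12\bigr)|\nabla u_i|^2$. Integrating over $M$ against $d\mu$ and using $\int_M\Delta_f|\nabla u_i|^2\,d\mu=0$ then yields
\[
\fint_M|\nabla^2u_i|^2\,d\mu=\bigl(\lambda_i(\Delta_f)-\tfrac12\bigr)\fint_M|\nabla u_i|^2\,d\mu=\lambda_i(\Delta_f)-\tfrac12\le\lambda_k(\Delta_f)-\tfrac12\le\delta,
\]
which is (3).

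The step I expect to be the main obstacle is justifying $\int_M\Delta_f|\nabla u_i|^2\,d\mu=0$ on the (possibly noncompact) soliton. Because $\Delta_f h\,d\mu=\Div(e^{-f}\nabla h)\,d\Vol_g$, I would run the standard cutoff/absorption argument with $\psi_R=\eta(d(\cdot,p)/R)$, where $\eta$ is a fixed cutoff with $0\le\eta\le1$, $\eta\equiv1$ near $0$ and $\mathrm{supp}\,\eta\subset[0,2]$, so that $|\nabla\psi_R|\le C/R$. Integrating $\psi_R^2$ times the pointwise Bochner identity over $M$ and using $|\nabla|\nabla u_i|^2|\le 2|\nabla^2u_i||\nabla u_i|$ together with Young's inequality, the cross term is absorbed and one gets $\int_M\psi_R^2|\nabla^2u_i|^2\,d\mu\le C\int_M|\nabla u_i|^2\,d\mu$ uniformly in $R\ge1$; since $\int_M|\nabla u_i|^2\,d\mu=\mu(M)<\infty$, letting $R\to\infty$ gives $|\nabla^2u_i|\in L^2(M,d\mu)$. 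Then $\bigl|\int_M\psi_R^2\Delta_f|\nabla u_i|^2\,d\mu\bigr|\le 4\int\psi_R|\nabla\psi_R||\nabla^2u_i||\nabla u_i|\,d\mu\le 4C R^{-1}\|\nabla^2u_i\|_{L^2(\mu)}\|\nabla u_i\|_{L^2(\mu)}\to0$, while $\Delta_f|\nabla u_i|^2\in L^1(M,d\mu)$ by the Bochner identity, so indeed $\int_M\Delta_f|\nabla u_i|^2\,d\mu=0$. (Equivalently, this is the weighted Stokes theorem for eigenfunctions on shrinkers, where the at-most-polynomial growth of eigenfunctions competes against the decay $e^{-f}\asymp e^{-d(\cdot,p)^2/4}$ coming from \eqref{fd}; see \cite{CZ17}.) Everything else is formal.
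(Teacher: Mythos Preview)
Your proof is correct and follows essentially the same approach as the paper: rescale an $L^2(\mu)$-orthonormal system of eigenfunctions to get (1) and (2), then integrate the Bochner identity to get (3). You in fact go further than the paper by supplying the cutoff argument justifying $\int_M\Delta_f|\nabla u_i|^2\,d\mu=0$ on the noncompact shrinker, a step the paper simply asserts.
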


\begin{remark}
By \cite[Corrolary 4.1]{WW09} (see also \cite{Mo05}), $\mu(M)$ is finite, thus the integral average on $M$ is well defined.
\end{remark}
\begin{remark}\label{rmk3.2}
The functions $u_i(i=1,...,k)$ in Lemma \ref{lm2.2} are just the functions we want in Lemma \ref{lm2.1}. We will prove that they satisfy the conclusions of Lemma \ref{lm2.1} later.
\end{remark}
\begin{proof}
Since $\Delta_f$ is self-adjoint on $M$, we can choose $L^2(M)$-orthonormal  eigenfunctions $\hat u_i\in W^{1,2}(M)(i=1,...,k)$ corresponding to the eigenvalue $\lambda_i(\Delta_f)$ .

Let $u_i=\lambda_i(\Delta_f)^{-\frac{1}{2}}\hat u_i$, then $\fint_M  u_i u_j d\mu=\lambda_i(\Delta_f)^{-1}\delta_{ij}$ and $\Delta_f u_i=-\lambda_i(\Delta_f) u_i$.

Thus by integration by parts, we have
\begin{align*}
\fint_M \langle\nabla u_i,\nabla u_j\rangle d\mu=-\fint_M (\Delta_f u_i)  u_j d\mu=\lambda_i(\Delta_f)\fint_M u_i  u_j d\mu=\delta_{ij},
\end{align*}
which proves (1) and (2).

For (3), by the Bochner formula \eqref{Bochner formula}, we have
\begin{align*}
\frac{1}{2}\Delta_f |\nabla u_i|^2&=|\nabla^2 u_i|^2+\langle \nabla u_i,\nabla(\Delta_f u_i)\rangle+\Ric_f(\nabla u_i,\nabla u_i)\\
&=|\nabla^2 u_i|^2+(\frac{1}{2}-\lambda_i(\Delta_f))|\nabla u_i|^2.
\end{align*}

Integrating it, we have
\begin{align*}
\fint_M|\nabla^2 u_i|^2 d\mu&=(\lambda_i(\Delta_f)-\frac{1}{2})\fint_M|\nabla u_i|^2 d\mu+\frac{1}{2}\fint_M\Delta_f |\nabla u_i|^2 d\mu\\
&=(\lambda_i(\Delta_f)-\frac{1}{2})\fint_M|\nabla u_i|^2 d\mu\\
&\le \delta \cdot1=\delta,
\end{align*}

which completes the proof of the lemma.
\end{proof}

We want to prove Lemma \ref{lm2.1} by using Lemma \ref{lm2.2}. However, there is a difficult that $|\nabla u_i|^2$ might concentrate to the infinity, then its integral average on $B_r(p)$ cannot be estimated by Lemma \ref{lm2.2}, in which the integral average are taken on the whole $M$.  To solve this issue, in   Lemma \ref{lm2.5}-Lemma \ref{5lm3.5} below, we prove that the integral on a ball, large sufficiently and uniformly, is close to the integral on the whole $M$.

\begin{lemma}\label{lm2.5}
Let $(M^n,g,f)$ be a gradient shrinking Ricci soliton, then
\begin{enumerate}
	\item
there exists a positive constant $C_0(n)$ depending only on $n$, such that
\begin{align}
\frac{\mu(B_{C_0(n)}(p))}{\mu(M)}>\frac{1}{2},
\end{align}
\item for any $\epsilon>0$, there exists a positive constant $R_0(n,\epsilon)$ depending only on $\epsilon$ and $n$, such that
\begin{align}
\frac{\mu(M\setminus B_{R}(p))}{\mu(M)}<\epsilon, \forall R\ge R_0(n,\epsilon),
\end{align}

\end{enumerate}
where $p$ is any minimum point of $f$.
\end{lemma}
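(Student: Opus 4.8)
The plan is to exploit the identity $\Delta_f f=\frac{n}{2}-f$ (valid whenever $f$ is normalized by \eqref{normf}) together with a Chebyshev-type estimate and the comparison between $f$ and $d(\cdot,p)$ from Lemma \ref{lm2.4}.

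First I would derive the identity. Tracing the soliton equation \eqref{BEEinstein} gives $R+\Delta f=\frac{n}{2}$, while \eqref{normf} gives $|\nabla f|^2=f-R$, so
\begin{align*}
\Delta_f f=\Delta f-|\nabla f|^2=\frac{n}{2}-f .
\end{align*}
By Lemma \ref{lm2.4} the function $f$ is proper and nonnegative, so each sublevel set $D_t:=\{f\le t\}$ is compact; for a regular value $t$ of $f$ the boundary $\partial D_t=\{f=t\}$ is a smooth hypersurface with outward unit normal $\nabla f/|\nabla f|$, and since $\Div(e^{-f}\nabla f)=e^{-f}\Delta_f f$ the divergence theorem yields
\begin{align*}
\int_{D_t}\left(\frac{n}{2}-f\right)d\mu=\int_{D_t}\Delta_f f\,d\mu=\int_{\partial D_t}e^{-f}|\nabla f|\,dA\ge 0 .
\end{align*}
Hence $\int_{D_t}f\,d\mu\le\frac{n}{2}\mu(D_t)\le\frac{n}{2}\mu(M)$ for every regular value $t$; letting $t\to\infty$ along such values (using $f\ge0$, monotone convergence, and $\mu(M)<\infty$) gives $\int_M f\,d\mu\le\frac{n}{2}\mu(M)$.

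Next, Chebyshev's inequality, legitimate since $f\ge0$, gives for every $T>0$
\begin{align*}
\mu(\{f\ge T\})\le\frac{1}{T}\int_M f\,d\mu\le\frac{n}{2T}\,\mu(M).
\end{align*}
On the other hand, the lower bound in \eqref{fd} shows that whenever $R>C_1(n)$ and $x\notin B_R(p)$ one has $f(x)\ge\frac14(R-C_1(n))^2$, so $M\setminus B_R(p)\subset\{f\ge\frac14(R-C_1(n))^2\}$. Combining the last two displays,
\begin{align*}
\frac{\mu(M\setminus B_R(p))}{\mu(M)}\le\frac{2n}{(R-C_1(n))^2}\qquad(R>C_1(n)),
\end{align*}
which proves (2) with $R_0(n,\epsilon):=C_1(n)+2\sqrt{n/\epsilon}$, and proves (1) with $C_0(n):=C_1(n)+3\sqrt{n}$, since then the right-hand side equals $\tfrac29<\tfrac12$, forcing $\mu(B_{C_0(n)}(p))/\mu(M)>\tfrac12$.

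The one step that requires care is the integration by parts leading to $\int_M f\,d\mu\le\frac{n}{2}\mu(M)$: one should apply the divergence theorem on the compact domains $D_t$ for regular values $t$ and then pass to the limit $t\to\infty$, rather than integrating over $M$ at once. Everything else is elementary; in particular no volume-growth bound enters, which is the advantage of this route over the more direct attempt to estimate $\int_{M\setminus B_R(p)}e^{-f}\,d\Vol_g$ (which would instead require a bound of the form $\Vol_g(B_s(p))\le C(n)s^n$).
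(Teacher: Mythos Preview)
Your proof is correct and follows essentially the same route as the paper: derive $\Delta_f f=\frac{n}{2}-f$, integrate to bound $\int_M f\,d\mu$ by $\frac{n}{2}\mu(M)$, apply Chebyshev's inequality to the sublevel sets of $f$, and then use Lemma~\ref{lm2.4} to pass from $f$-level sets to distance balls. The only difference is that the paper integrates $\Delta_f f$ directly over $M$ and asserts $\int_M\Delta_f f\,d\mu=0$ (obtaining equality $\int_M f\,d\mu=\frac{n}{2}\mu(M)$), whereas you justify the integration by parts via the compact exhaustion $D_t=\{f\le t\}$ and obtain only the inequality $\int_M f\,d\mu\le\frac{n}{2}\mu(M)$; your version is slightly more careful on this point, and the inequality suffices for the Chebyshev step.
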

\begin{proof}
Taking trace of the soliton equation $\Ric+\nabla^2 f=\frac{1}{2}g$, we have
\begin{align}\label{vol1}
R+\Delta f=\frac{n}{2}.
\end{align}

Withous loss of generality we assume   $f$ is normalized such that \eqref{normf} holds. Then combining \eqref{vol1} and \eqref{normf}, we have
\begin{align}
\Delta_f f=\frac{n}{2}-f.
\end{align}

Taking integral and by integration by parts, we have
\begin{align}
0=\int_M\Delta_f fd\mu=\int_M(\frac{n}{2}-f)d\mu.
\end{align}

Thus for any $t\ge 0$, we have
\begin{align}\label{f3.6}
\frac{t^2}{4}\mu (\{f\ge\frac{t^2}{4}\})\le \int_M fd\mu=\int_M\frac{n}{2}d\mu=\frac{n}{2}\mu(M).
\end{align}

By Lemma \ref{lm2.4}, we have
\begin{align}\label{f3.7}
M\setminus B_{t+C_1(n)}(p)\subset \{f\ge\frac{t^2}{4}\}.
\end{align}

Combining \eqref{f3.6} and \eqref{f3.7} we have
\begin{align}\label{ratios}
\frac{\mu(M\setminus B_{t+C_1(n)}(p))}{\mu(M)}\le \frac{2n}{t^2},\forall t\ge 0.
\end{align}

For (1), let $t=3n$ and $C_0(n)=3n+C_1(n)$, we have
\begin{align}
\frac{\mu(M\setminus B_{C_0(n)}(p))}{\mu(M)}< \frac{1}{2}.
\end{align}

Thus we have
\begin{align}
\frac{\mu(B_{C_0(n)}(p))}{\mu(M)}>\frac{1}{2},
\end{align}
which gives (1).

For (2), by \eqref{ratios} we have
\begin{align}\label{ratios1}
\frac{\mu(M\setminus B_t(p))}{\mu(M)}\le \frac{4n}{(t-C_1(n))^2},\forall t\ge C_1(n).
\end{align}

Since $\lim_{t\to +\infty} \frac{4n}{(t-C_1(n))^2}=0$,  \eqref{ratios1} gives (2), thus the lemma is proved.
\end{proof}

\begin{lemma}\label{lmballctl}
Suppose  $u$ is an eigenfunction such that $\Delta_f u+\lambda u=0$, then for any $\epsilon>0$, there exists $R=R(\epsilon,\lambda)$ which does not depend on $u$ and $g$, such that
\begin{align}\label{tpvl1}
\int_{B_R(p)}u^2d\mu\ge (1-\epsilon)\int_M u^2 d\mu.
\end{align}
\end{lemma}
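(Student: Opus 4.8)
The plan is to reduce the statement to the weighted $L^2$-estimate
\[
\int_M f\,u^2\,d\mu \;\le\; C(n,\lambda)\int_M u^2\,d\mu ,
\]
and then conclude by Chebyshev's inequality. Indeed, once this is known, Lemma \ref{lm2.4} gives $f(x)\ge \tfrac14\big(d(x,p)-C_1(n)\big)_+^2$, so on $M\setminus B_R(p)$ with $R\ge C_1(n)$ one has $f\ge \tfrac14(R-C_1(n))^2$ and hence
\[
\tfrac14(R-C_1(n))^2\int_{M\setminus B_R(p)}u^2\,d\mu\;\le\; \int_M f\,u^2\,d\mu\;\le\; C(n,\lambda)\int_M u^2\,d\mu .
\]
Choosing $R=R(n,\lambda,\epsilon)$ so large that $4C(n,\lambda)(R-C_1(n))^{-2}<\epsilon$ then yields $\int_{M\setminus B_R(p)}u^2\,d\mu<\epsilon\int_M u^2\,d\mu$, which is \eqref{tpvl1}; note this $R$ depends only on $n,\lambda,\epsilon$ and not on $u$ or $g$.

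To prove the weighted estimate I would use three ingredients, all available above: the identity $\Delta_f f=\tfrac n2-f$ (trace the soliton equation and use the normalization \eqref{normf}, as in the proof of Lemma \ref{lm2.5}); the bound $|\nabla f|^2=f-R\le f$, valid since $R\ge 0$ on $M$ (see the proof of Lemma \ref{segie}); and the eigenvalue identity $\int_M|\nabla u|^2\,d\mu=\lambda\int_M u^2\,d\mu$, obtained by integrating $\Delta_f u=-\lambda u$ against $u$. Formally, writing $f=\tfrac n2-\Delta_f f$, integrating by parts against $u^2$, and using Cauchy--Schwarz together with $\int_M u^2|\nabla f|^2\,d\mu\le\int_M f\,u^2\,d\mu$,
\[
\int_M f\,u^2\,d\mu=\tfrac n2\int_M u^2\,d\mu+2\int_M u\,\langle\nabla f,\nabla u\rangle\,d\mu\le \tfrac n2\int_M u^2\,d\mu+2\Big(\int_M f\,u^2\,d\mu\Big)^{1/2}\Big(\lambda\int_M u^2\,d\mu\Big)^{1/2}.
\]
Viewing this as a quadratic inequality for $\big(\int_M f\,u^2\,d\mu\big)^{1/2}$ gives $\int_M f\,u^2\,d\mu\le\big(\sqrt\lambda+\sqrt{\lambda+n/2}\big)^2\int_M u^2\,d\mu$, which is the claimed estimate with $C(n,\lambda)=\big(\sqrt\lambda+\sqrt{\lambda+n/2}\big)^2$.

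Since $M$ is noncompact, these integrations by parts must be justified, and this is the only place where any real work is needed. I would run the computation with a cutoff $\phi_j\in C_0^\infty(B_{2j}(p))$ satisfying $\phi_j\equiv 1$ on $B_j(p)$ and $|\nabla\phi_j|\le C/j$, applied to $\phi_j^2u^2$ in place of $u^2$. The only extra term is $2\int_M u^2\phi_j\langle\nabla f,\nabla\phi_j\rangle\,d\mu$; on the support of $\nabla\phi_j$ one has $|\nabla f|\le\sqrt f\le C(n)j$ by Lemma \ref{lm2.4}, so this term is bounded by $C(n)\int_{B_{2j}(p)\setminus B_j(p)}u^2\,d\mu$, which tends to $0$ as $j\to\infty$ since $u\in L^2(M,d\mu)$ and $\mu(M)<\infty$. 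Applying the quadratic-inequality argument to $\int_M\phi_j^2 f\,u^2\,d\mu$ yields a bound uniform in $j$, and monotone convergence then gives the weighted estimate (and, a posteriori, $\int_M f\,u^2\,d\mu<\infty$). The cutoff bookkeeping and the final Chebyshev step are routine. Alternatively one could invoke known polynomial growth bounds for eigenfunctions of $\Delta_f$ on shrinking solitons, but the argument above is self-contained.
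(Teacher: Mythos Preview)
Your proof is correct and takes a genuinely different route from the paper's. The paper normalizes $\int_M u^2\,d\mu=\mu(M)$ and invokes the log-Sobolev inequality $\int_M u^2\log u^2\,d\mu\le 4\int_M|\nabla u|^2\,d\mu=4\lambda\,\mu(M)$ to control $\int_{\{|u|\ge K\}}u^2\,d\mu$, then combines this with the volume decay of Lemma~\ref{lm2.5} to handle the region $M\setminus B_R(p)$ where $|u|<K$. Your argument bypasses log-Sobolev entirely: you obtain the weighted moment bound $\int_M f\,u^2\,d\mu\le C(n,\lambda)\int_M u^2\,d\mu$ by integrating the drift identity $\Delta_f f=\tfrac n2-f$ against $u^2$, and then conclude by Chebyshev together with Lemma~\ref{lm2.4}. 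This is more elementary, yields the explicit constant $C(n,\lambda)=(\sqrt\lambda+\sqrt{\lambda+n/2})^2$, and the cutoff justification you sketch is sound (in particular $f=R+|\nabla f|^2\ge 0$ on a shrinker, so monotone convergence applies once the $\phi_j$ are chosen increasing). The paper's log-Sobolev route, on the other hand, is what lets it prove the companion Lemma~\ref{5lm3.5} for $|\nabla u_i|^2$ in place of $u^2$ by the same template, using only $\int_M|\nabla^2 u_i|^2\,d\mu\le\delta$ and Kato's inequality; your identity-based method does not adapt as directly to that case.
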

\begin{remark}
Colding-Minicozzi II has established stronger results in \cite{CM22} by using frequency estimates. However, the results here is good enough for our use, and we give a proof here based on a log-Sobolev inequality.
\end{remark}
\begin{proof}
For any gradient shrinking Ricci solitons, we have the following log-Sobolev inequality (see \cite{BE85} or \cite[Inequality (1) and Remark 2]{CZ17}:
\begin{align}\label{logS}
\int_M u^2 \log u^2d\mu \le 4\int_M |\nabla u|^2d\mu,
\end{align}
for any  $u\in W^{1,2}(M)$ such that $\int_M u^2d\mu=\mu(M)$.

Let $u$ be an eigenfunction such that $\Delta_f u+\lambda u=0$. Since \eqref{tpvl1} is invariant while multiplying a constant to $u$, without loss of generality we can assume $\int_M u^2 d\mu=\mu(M)$, for any $K>1$ we define
\begin{align}
\Omega_K:=\{|u|\ge K\}.
\end{align}

By the fact $x\log x\ge -e^{-1}$, we have
\begin{align}\label{ballctl}
\int_{M\setminus\Omega_K}u^2\log u^2d\mu\ge -e^{-1}\mu(M\setminus\Omega_K).
\end{align}

Moreover, by the definition of $\Omega_K$ we have
\begin{align}\label{ballctl2}
\int_{\Omega_K}u^2\log u^2d\mu\ge \log K^2\int_{\Omega_K}u^2d\mu.
\end{align}

On the other hand, by log-Sobolev inequality \eqref{logS} and integration by parts, we have
\begin{align}\label{ballctl3}
\int_M u^2\log u^2d\mu\le 4\int_M |\nabla u|^2d\mu=-4\int_M u\Delta_f ud\mu=4\lambda\int_M u^2d\mu=4\lambda \mu(M).
\end{align}

Combining \eqref{ballctl}-\eqref{ballctl3}, we have
\begin{align}
-e^{-1}\mu(M\setminus\Omega_K)+\log K^2\int_{\Omega_K}u^2d\mu\le 4\lambda \mu(M),
\end{align}
which gives
\begin{align}\label{ballctl4}
\int_{\Omega_K}u^2d\mu\le\frac{4\lambda \mu(M)+e^{-1}\mu(M)}{\log K^2},
\end{align}
and thus
\begin{align}\label{ballctl5}
\int_{M\setminus\Omega_K}u^2d\mu\ge1-\frac{4\lambda \mu(M)+e^{-1}\mu(M)}{\log K^2}.
\end{align}

By Lemma \ref{lm2.5}, for any $K>1$, there exists $R=R(n,K)$, such that
\begin{align}\label{ballctl6}
\mu(M\setminus B_R(p))\le K^{-4}\mu(M).
\end{align}

Then by \eqref{ballctl6} we have
\begin{align}\label{ballctl7}
\int_{(M\setminus\Omega_K)\cap (M\setminus B_R(p))}u^2d\mu\le \mu(M\setminus B_R(p))\cdot K^2\le K^{-2}\mu(M).
\end{align}

Combining \eqref{ballctl5} and \eqref{ballctl7}, we have
\begin{align}\label{ballctl8}
\int_{(M\setminus\Omega_K)\cap  B_R(p)}u^2d\mu\ge \mu(M)-\frac{4\lambda\mu(M)+e^{-1}\mu(M)}{\log K^2} - K^{-2}\mu(M)\ge\mu(M)\left(1-\frac{4\lambda+e^{-1}}{\log K^2}-K^{-2}\right).
\end{align}

For any $\epsilon>0$, we can take $K=K(n,\epsilon,\lambda)$ sufficiently large such that $\left(1-\frac{4\lambda+e^{-1}}{\log K^2}-K^{-2}\right)<\epsilon$. Thus by \eqref{ballctl8}, there exists $R=R(n,\epsilon,\lambda)$ sufficiently large, such that
\begin{align}
\int_{M\setminus B_R(p)}u^2d\mu\ge\int_{(M\setminus\Omega_K)\cap  B_R(p)}u^2d\mu\ge (1-\epsilon) \mu(M)=(1-\epsilon)\int_M u^2 d\mu,
\end{align}
which completes the proof of the lemma.
\end{proof}

\begin{lemma}\label{5lm3.5}
Let $u_i$ be the eigenfunctions given in Lemma \ref{lm2.2}, 
then for any $\epsilon>0$, there exists $R=R(n,\epsilon)$ which does not depend on $u_i$ and $g$, such that
\begin{align}\label{tpvl1z}
\int_{B_R(p)}|\nabla u_i|^2d\mu\ge (1-\epsilon)\int_M |\nabla u_i|^2 d\mu.
\end{align}
\end{lemma}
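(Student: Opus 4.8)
The plan is to reduce the gradient estimate \eqref{tpvl1z} to the $L^2$ estimate of Lemma \ref{lmballctl}, applied not to $u_i$ itself but to a cleverly chosen eigenfunction. The key observation is the Bochner-type identity established in the proof of Lemma \ref{lm2.2}: for an eigenfunction with $\Delta_f u_i = -\lambda_i(\Delta_f) u_i$ one has
\begin{align}
\tfrac{1}{2}\Delta_f |\nabla u_i|^2 = |\nabla^2 u_i|^2 + \big(\tfrac{1}{2}-\lambda_i(\Delta_f)\big)|\nabla u_i|^2 \ge \big(\tfrac{1}{2}-\lambda_i(\Delta_f)\big)|\nabla u_i|^2,
\end{align}
so that $|\nabla u_i|^2$ is a nonnegative subsolution of $\Delta_f w + (2\lambda_i - 1) w \ge 0$. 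A cleaner route, though, is to note that each partial derivative behaves like an eigenfunction: if one passes to any component $\partial_a u_i$ in a suitable frame, differentiating the soliton equation and the eigenvalue equation shows that the vector field $\nabla u_i$ satisfies a Bochner/Lichnerowicz equation whose trace is exactly the displayed one. So I would work directly with the scalar subsolution $w:=|\nabla u_i|^2$.

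First I would record that $\Delta_f w \ge (2\lambda_i(\Delta_f)-1) w$ with $\lambda_i(\Delta_f)\le \tfrac12+\delta$ bounded (say $\le 1$), so $A:=2\lambda_i-1$ is a uniformly bounded nonnegative constant. Next I would apply the mean-value / Moser iteration estimate of Lemma \ref{Msit} to $w$ on balls centered at $p$: for any fixed radius this gives $\sup_{B_\rho(p)} w \le C(n,\rho)\fint_{B_{2\rho}(p)} w\, d\mu$, hence a uniform pointwise bound $|\nabla u_i|^2 \le C(n)$ on, say, $B_{C_0(n)}(p)$, using that $\fint_M |\nabla u_i|^2 d\mu = 1$ from Lemma \ref{lm2.2}(2) together with the volume ratio control of Lemma \ref{lm2.5}(1). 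The heart of the argument is then the tail estimate: I must show $\int_{M\setminus B_R(p)} |\nabla u_i|^2 d\mu$ is small, uniformly in $i$ and in $g$, for $R$ large. Here I would invoke Lemma \ref{lmballctl} applied to $u_i$ (whose eigenvalue is uniformly bounded by $1$), giving $\int_{B_{R'}(p)} u_i^2 d\mu \ge (1-\epsilon)\int_M u_i^2 d\mu$. To convert this $L^2$-concentration of $u_i$ into $L^2$-concentration of $\nabla u_i$, I would use a cutoff $\varphi$ supported outside $B_{R'}(p)$, equal to $1$ outside $B_{2R'}(p)$, with $|\nabla \varphi|\le 2/R'$, and integrate the eigenvalue equation against $\varphi^2 u_i$:
\begin{align}
\int_M \varphi^2 |\nabla u_i|^2 d\mu = \lambda_i \int_M \varphi^2 u_i^2 d\mu - 2\int_M \varphi u_i \langle \nabla \varphi, \nabla u_i\rangle d\mu.
\end{align}
The first term on the right is $\le \lambda_i \cdot \epsilon \,\mu(M) \le \epsilon\,\mu(M)$, and the cross term is absorbed by Cauchy–Schwarz at the cost of a further $\tfrac12\int \varphi^2|\nabla u_i|^2 + C R'^{-2}\int_{B_{2R'}\setminus B_{R'}} u_i^2 \le \tfrac12\int\varphi^2|\nabla u_i|^2 + C\epsilon\,\mu(M)$; rearranging yields $\int_{M\setminus B_{2R'}(p)} |\nabla u_i|^2 d\mu \le C\epsilon\,\mu(M)$. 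Combined with $\fint_M |\nabla u_i|^2 d\mu = 1$, i.e. $\int_M |\nabla u_i|^2 d\mu = \mu(M)$, this is exactly \eqref{tpvl1z} after relabeling $R:=2R'$ and $\epsilon$.

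The main obstacle I anticipate is making the Caccioppoli-type cutoff argument genuinely \emph{uniform} — independent of $g$ — since $R'$ from Lemma \ref{lmballctl} depends only on $n,\epsilon,\lambda$ (good), but I need the constant in the cross-term absorption not to degenerate; this is fine because it comes only from Cauchy–Schwarz and the explicit $|\nabla\varphi|\le 2/R'$, with no geometry entering. One subtlety worth checking is that all these integrals over $M$ are finite, which is guaranteed by $\mu(M)<\infty$ (Remark after Lemma \ref{lm2.2}) and the fact that $u_i\in W^{1,2}(M)$ with $\Delta_f u_i = -\lambda_i u_i$; integrating the eigenvalue equation against $\varphi^2 u_i$ is legitimate because $\varphi^2 u_i \in W^{1,2}_0$ of a sufficiently large ball after a standard exhaustion argument. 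No appeal to the frequency estimates of \cite{CM22} is needed.
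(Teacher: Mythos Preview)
Your cutoff/Caccioppoli argument in the second half is correct and complete on its own: apply Lemma~\ref{lmballctl} to $u_i$ (whose eigenvalue lies in $[\tfrac12,1]$) to get $\int_{M\setminus B_{R'}(p)} u_i^2\,d\mu \le \epsilon\,\lambda_i^{-1}\mu(M)\le 2\epsilon\,\mu(M)$, then test the eigenvalue equation against $\varphi^2 u_i$ and absorb the cross term; this yields $\int_{M\setminus B_{2R'}(p)}|\nabla u_i|^2\,d\mu \le C\epsilon\,\mu(M) = C\epsilon\int_M|\nabla u_i|^2\,d\mu$ with constants depending only on $n,\epsilon$. The preliminary discussion of the Bochner subsolution and Moser iteration is not actually used and can be dropped (and note the sign: $\Delta_f w \ge -(2\lambda_i-1)w$, not $\ge (2\lambda_i-1)w$).

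This is a genuinely different route from the paper's. The paper does \emph{not} reduce to Lemma~\ref{lmballctl} for $u_i$; instead it reruns the log-Sobolev argument of Lemma~\ref{lmballctl} with $|\nabla u_i|$ in place of $u$, using Kato's inequality $|\nabla|\nabla u_i||^2\le |\nabla^2 u_i|^2$ together with the Hessian bound $\fint_M|\nabla^2 u_i|^2\,d\mu\le\delta$ from Lemma~\ref{lm2.2}(3) to control $\int_M |\nabla u_i|^2\log|\nabla u_i|^2\,d\mu$. Your approach is more elementary in that it avoids Kato and does not invoke the Hessian estimate at all --- it needs only Lemma~\ref{lm2.2}(1)--(2), Lemma~\ref{lmballctl}, and integration by parts. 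The paper's approach, on the other hand, is more structurally parallel to Lemma~\ref{lmballctl} and would generalize to higher derivatives without change, whereas your Caccioppoli step is specific to passing from $u$ to $\nabla u$.
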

\begin{proof}
By Lemma \ref{lm2.2} we have $\int_M |\nabla u_i|^2d\mu=\mu(M)$ and $u\in W^{2,2}(M)$, thus we can apply  log-Sobolev inequality \eqref{logS} to $|\nabla u|$ and we get
\begin{align}\label{logS1}
\int_M |\nabla u_i|^2 \log |\nabla u_i|^2d\mu \le 4\int_M |\nabla |\nabla u_i||^2d\mu.
\end{align}

By Kato inequality $|\nabla |\nabla u_i||^2\le |\nabla^2 u_i|^2$ and Lemma \ref{lm2.2} (3), \eqref{logS1} gives
\begin{align}\label{logS2}
\int_M |\nabla u_i|^2 \log |\nabla u_i|^2d\mu \le 4\int_M |\nabla^2 u_i|^2d\mu\le 4\delta.
\end{align}

Similar as the proof of Lemma \ref{lmballctl}, for any $K>1$ we define
\begin{align*}
\Omega_K:=\{|u|\ge K\}.
\end{align*}

By the fact $x\log x\ge -e^{-1}$, we have
\begin{align}\label{ballctlz}
\int_{M\setminus\Omega_K}|\nabla u_i|^2\log |\nabla u_i|^2d\mu\ge -e^{-1}\mu(M\setminus\Omega_K).
\end{align}

Moreover, by the definition of $\Omega_K$ we have
\begin{align}\label{ballctl2z}
\int_{\Omega_K}|\nabla u_i|^2\log |\nabla u_i|^2d\mu\ge \log K^2\int_{\Omega_K}|\nabla u_i|^2d\mu.
\end{align}

Combining \eqref{logS2}-\eqref{ballctl2z}, we have
\begin{align}
-e^{-1}\mu(M\setminus\Omega_K)+\log K^2\int_{\Omega_K}|\nabla u_i|^2d\mu\le 4\delta \mu(M),
\end{align}
which gives
\begin{align}\label{ballctl4z}
\int_{\Omega_K}|\nabla u_i|^2d\mu\le\frac{4\delta \mu(M)+e^{-1}\mu(M)}{\log K^2},
\end{align}
and thus
\begin{align}\label{ballctl5z}
\int_{M\setminus\Omega_K}|\nabla u_i|^2d\mu\ge1-\frac{4\delta \mu(M)+e^{-1}\mu(M)}{\log K^2}.
\end{align}

By Lemma \ref{lm2.5}, for any $K>1$, there exists $R=R(K)$, such that
\begin{align}\label{ballctl6z}
\mu(M\setminus B_R(p))\le K^{-4}\mu(M).
\end{align}

Then by \eqref{ballctl6z} we have
\begin{align}\label{ballctl7z}
\int_{(M\setminus\Omega_K)\cap (M\setminus B_R(p))}|\nabla u_i|^2d\mu\le \mu(M\setminus B_R(p))\cdot K^2\le K^{-2}\mu(M).
\end{align}

Combining \eqref{ballctl5z} and \eqref{ballctl7z}, we have
\begin{align}\label{ballctl8z}
\int_{(M\setminus\Omega_K)\cap  B_R(p)}|\nabla u_i|^2d\mu\ge \mu(M)-\frac{4\delta\mu(M)+e^{-1}\mu(M)}{\log K^2} - K^{-2}\mu(M)\ge\mu(M)\left(1-\frac{4\delta+e^{-1}}{\log K^2}-K^{-2}\right).
\end{align}

Without loss of generality we assume $\delta<1$, then for any $\epsilon>0$, we can take $K=K(n,\epsilon)$ sufficiently large such that $\left(1-\frac{4\delta+e^{-1}}{\log K^2}-K^{-2}\right)<\epsilon$. Thus by \eqref{ballctl8}, there exists $R=R(n,\epsilon)$ sufficiently large, such that
\begin{align}
\int_{M\setminus B_R(p)}|\nabla u_i|^2d\mu\ge\int_{(M\setminus\Omega_K)\cap  B_R(p)}|\nabla u_i|^2d\mu\ge (1-\epsilon) \mu(M)=(1-\epsilon)\int_M |\nabla u_i|^2 d\mu,
\end{align}
which completes the proof of the lemma.
\end{proof}

Using Lemma \ref{lm2.5}-Lemma \ref{5lm3.5}, we can prove that $\{\nabla u_i\}_{i=1,...,k}$ is ``almost orthonormal" in integral average sense on $B_R(p)$ for $R$ large:
\begin{lemma}\label{alor}
For any $\epsilon>0$, there exists a positive constant $R_1(n,\epsilon)$ depending only on $\epsilon$ and $n$, such that if $R\ge R_1(n,\epsilon)$, then
\begin{align*}
\left|\fint_{B_R(p)} \left(\langle \nabla u_i,\nabla u_j\rangle-\delta_{ij}\right) d\mu\right|<\epsilon, \forall i,j=1,...,k,
\end{align*}
where $u_i(i=1,...,k)$ are the functions constructed in Lemma \ref{lm2.2}.
\end{lemma}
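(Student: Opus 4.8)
The plan is to compare $\fint_{B_R(p)}\langle\nabla u_i,\nabla u_j\rangle\,d\mu$ with its analogue over the whole manifold, which by Lemma \ref{lm2.2}(2) is precisely $\delta_{ij}$. The difference decomposes into two errors: the tail of $\langle\nabla u_i,\nabla u_j\rangle$ on $M\setminus B_R(p)$, and the discrepancy between normalising by $\mu(B_R(p))$ and by $\mu(M)$. I will show both are small once $R$ is large, with a threshold $R_1$ depending only on $n$ and $\epsilon$.

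First I would fix two auxiliary small constants $\epsilon_1,\epsilon_2>0$, to be pinned down in terms of $\epsilon$ at the very end. Applying Lemma \ref{5lm3.5} produces a radius $R'=R'(n,\epsilon_1)$, independent of $i$ and of $g$, such that $\int_{B_{R}(p)}|\nabla u_i|^2\,d\mu\ge(1-\epsilon_1)\int_M|\nabla u_i|^2\,d\mu$ whenever $R\ge R'$; since Lemma \ref{lm2.2}(2) gives $\int_M|\nabla u_i|^2\,d\mu=\mu(M)$, this is equivalent to $\int_{M\setminus B_R(p)}|\nabla u_i|^2\,d\mu\le\epsilon_1\mu(M)$ for every $i=1,\dots,k$. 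Then Cauchy--Schwarz controls the off-diagonal tails at one stroke, $\bigl|\int_{M\setminus B_R(p)}\langle\nabla u_i,\nabla u_j\rangle\,d\mu\bigr|\le\epsilon_1\mu(M)$, so using $\int_M\langle\nabla u_i,\nabla u_j\rangle\,d\mu=\mu(M)\delta_{ij}$ I obtain $\bigl|\int_{B_R(p)}\langle\nabla u_i,\nabla u_j\rangle\,d\mu-\mu(M)\delta_{ij}\bigr|\le\epsilon_1\mu(M)$ for all $i,j$.

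Next I would invoke Lemma \ref{lm2.5}(2) to choose $R''=R''(n,\epsilon_2)$ with $\mu(M\setminus B_R(p))<\epsilon_2\mu(M)$, hence $(1-\epsilon_2)\mu(M)\le\mu(B_R(p))\le\mu(M)$, for $R\ge R''$. Setting $R_1(n,\epsilon):=\max\{R',R''\}$ and writing $\int_{B_R(p)}\langle\nabla u_i,\nabla u_j\rangle\,d\mu-\mu(B_R(p))\delta_{ij}$ as $\bigl(\int_{B_R(p)}\langle\nabla u_i,\nabla u_j\rangle\,d\mu-\mu(M)\delta_{ij}\bigr)+(\mu(M)-\mu(B_R(p)))\delta_{ij}$, dividing by $\mu(B_R(p))$ gives $\bigl|\fint_{B_R(p)}\langle\nabla u_i,\nabla u_j\rangle\,d\mu-\delta_{ij}\bigr|\le(\epsilon_1+\epsilon_2)/(1-\epsilon_2)$ for all $i,j=1,\dots,k$; choosing $\epsilon_1=\epsilon_2$ small enough that $2\epsilon_1/(1-\epsilon_1)<\epsilon$ closes the argument. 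I do not anticipate a real obstacle here: the only delicate point is that every ingredient---the gradient tail bound, the volume tail bound, and the Cauchy--Schwarz step for the terms with $i\ne j$---must be uniform in $i$, in $k$, and in the soliton metric, which is exactly why Lemmas \ref{5lm3.5} and \ref{lm2.5} were formulated with constants depending only on $n$ and $\epsilon$.
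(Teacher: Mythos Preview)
Your proof is correct and follows essentially the same approach as the paper: use Lemma~\ref{5lm3.5} to bound the tail $\int_{M\setminus B_R(p)}|\nabla u_i|^2\,d\mu$, control the cross terms (you via Cauchy--Schwarz, the paper via $|\langle a,b\rangle|\le\tfrac12(|a|^2+|b|^2)$), compare $\int_{B_R(p)}\langle\nabla u_i,\nabla u_j\rangle\,d\mu$ to $\mu(M)\delta_{ij}$ from Lemma~\ref{lm2.2}(2), and then use Lemma~\ref{lm2.5} to pass from $\mu(M)$ to $\mu(B_R(p))$. The only cosmetic difference is that the paper packages everything in the $\Psi(R^{-1}\mid n)$ notation rather than explicit $\epsilon_1,\epsilon_2$.
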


Before proving Lemma \ref{alor}, let us give a notation. We denote by
\begin{align}
\Psi=\Psi(\epsilon|c_1,...,c_N),
\end{align}
some nonnegative functions such that for any fixed $c_1,...,c_N$,
\begin{align}\label{Psi}
\lim_{\epsilon\to 0^+}\Psi(\epsilon|c_1,...,c_N)=0.
\end{align}

Once the parameters $\epsilon,c_1,...,c_N$ have been fixed, we allow the specific function $\Psi$ to change from line to line.

Thus the conclusion of Lemma \ref{alor} could be rephrased as:
\begin{align}
\left|\fint_{B_R(p)} \left(\langle \nabla u_i,\nabla u_j\rangle-\delta_{ij}\right) d\mu\right|
\le \Psi(R^{-1}|n),\forall i,j=1,...,k,
\end{align}
\begin{proof}
By Lemma \ref{5lm3.5}, we have
\begin{align}
\int_{B_R(p)} |\nabla u_i|^2d\mu \ge \mu(M)\left(1-\Psi(R^{-1}|n)\right),\forall i=1,...,k.
\end{align}

Thus we have
\begin{align}
\int_{M\setminus B_R(p)} |\nabla u_i|^2d\mu \le \mu(M)\Psi(R^{-1}|n),\forall i=1,...,k.
\end{align}

Then we have
\begin{align}\label{MBR}
\int_{M\setminus B_R(p)} |\langle \nabla u_i,\nabla u_j\rangle|d\mu \le
2\int_{M\setminus B_R(p)} (|\nabla u_i|^2+|\nabla u_j|^2)d\mu \le \mu(M)\Psi(R^{-1}|n),\forall i,j=1,...,k.
\end{align}

Combining \eqref{MBR} and Lemma \ref{lm2.2} (2), we have
\begin{align}
\left|\int_{B_R(p)} \langle \nabla u_i,\nabla u_j\rangle d\mu-\mu(M)\delta_{ij}\right| &=
\left|\int_{B_R(p)} \langle \nabla u_i,\nabla u_j\rangle d\mu-\int_{M} \langle \nabla u_i,\nabla u_j\rangle d\mu\right| \notag\\
&\le\int_{M\setminus B_R(p)} |\langle \nabla u_i,\nabla u_j\rangle|d\mu \notag\\
&\le \mu(M)\Psi(R^{-1}|n),\forall i,j=1,...,k.
\end{align}

Then we have
\begin{align}
\left|\int_{B_R(p)} \left(\langle \nabla u_i,\nabla u_j\rangle-\delta_{ij}\right) d\mu\right|
&=
\left|\int_{B_R(p)} \langle \nabla u_i,\nabla u_j\rangle d\mu-\mu(B_R)\delta_{ij}\right| \notag\\
&\le \left|\int_{B_R(p)} \langle \nabla u_i,\nabla u_j\rangle d\mu-\mu(M)\delta_{ij}\right|+\mu(M\setminus B_R(p))\delta_{ij}\notag\\
&\le \mu(M)\Psi(R^{-1}|n)+\mu(M\setminus B_R(p))\delta_{ij},\forall i,j=1,...,k.
\end{align}

Thus we have
\begin{align}\label{ABR1}
\left|\fint_{B_R(p)} \left(\langle \nabla u_i,\nabla u_j\rangle-\delta_{ij}\right) d\mu\right|
&\le \frac{\mu(M)}{\mu(B_R(p))}\left(\Psi(R^{-1}|n)+\frac{\mu(M\setminus B_R(p))}{\mu(M)}\delta_{ij}\right),\forall i,j=1,...,k.
\end{align}

By Lemma \ref{lm2.5}, we have
\begin{align}\label{ABR2}
\frac{\mu(M)}{\mu(B_R(p))}\le\frac{1}{2}, \forall R>C_0(n),
\end{align}
\begin{align}\label{ABR3}
\frac{\mu(M\setminus B_R(p))}{\mu(M)}\le\Psi(R^{-1}|n).
\end{align}

Combining, \eqref{ABR1}-\eqref{ABR3}, we have
\begin{align}
\left|\fint_{B_R(p)} \left(\langle \nabla u_i,\nabla u_j\rangle-\delta_{ij}\right) d\mu\right|
\le \Psi(R^{-1}|n),\forall i,j=1,...,k,
\end{align}
which proves the lemma.
\end{proof}

Now we can prove Lemma \ref{lm2.1}:
\begin{proof}[Proof of Lemma \ref{lm2.1}]

For any $\delta>0$, let $u_i(i=1,...,k)$ be the eigenfunctions constructed in Lemma \ref{lm2.2}.

For (1), by Lemma \ref{lm2.2} we have $\fint_M|\nabla^2 u_i|^2 d\mu< \delta$, and then we have
\begin{align}
\fint_{B_r(p)}|\nabla^2 u_i|^2 d\mu\le \frac{\mu(M)}{\mu(B_r(p))} \fint_M|\nabla^2 u_i|^2 d\mu< \delta \frac{\mu(M)}{\mu(B_r(p))}.
\end{align}

By Lemma \ref{lm2.5}, for any $r\ge C_0(n)$, we have
\begin{align}
\fint_{B_r(p)}|\nabla^2 u_i|^2 d\mu\le \frac{\mu(M)}{\mu(B_r(p))} \mu< 2\delta,
\end{align}
which proves (1).

For (2), let $w=\langle \nabla u_i,\nabla u_j\rangle-\delta_{ij}$, then by Lemma \ref{alor} we have
\begin{align}
\left|\fint_{B_R(p)}w d\mu\right|<\Psi(R^{-1}|n).
\end{align}

By the Neumann Poincar\'e inequality (Lemma \ref{Poinie} (2)), we have
\begin{align}\label{Pciq1}
\fint_{B_R(p)}\left|w-\fint_{B_R(p)}w d\mu\right|d\mu\le C(n,R)\fint_{B_R(p)} |\nabla w|d\mu.
\end{align}

By Cauchy inequality, we have
\begin{align}\label{Pciq2}
|\nabla w|\le |\nabla^2u_i||\nabla u_j|+|\nabla^2u_j||\nabla u_i|.
\end{align}

Since $\Delta_f u_i=-\lambda_i(\Delta_f) u_i$ where $\lambda_i(\Delta_f)\in [\frac{1}{2},1)$, by the gradient estimate (Lemma \ref{Gradestm} (1)), we have
\begin{align}\label{Pciq31}
\sup_{B_R(p)}|\nabla u_i|\le C(n,R)\fint_{B_{2R}(p)}|\nabla u_i|^2d\mu,
\end{align}

By Lemma \ref{lm2.5} (1), we have
\begin{align}\label{Pciq32}
\fint_{B_{2R}(p)}|\nabla u_i|^2d\mu\le \frac{\mu(M)}{\mu(B_{2R}(p))}\fint_M |\nabla u_i|^2d\mu\le 2\fint_M |\nabla u_i|^2d\mu, \forall R\ge C_0(n).
\end{align}

Combining \eqref{Pciq31}, \eqref{Pciq32} and Lemma \ref{lm2.2} (2), we have
\begin{align}\label{Pciq3}
\sup_{B_R(p)}|\nabla u_i|\le C(n,R), \forall R\ge 0.
\end{align}

Combining \eqref{Pciq1}, \eqref{Pciq2} and \eqref{Pciq3}, we have
\begin{align}\label{Pciq0}
\fint_{B_R(p)}\left|w-\fint_{B_R(p)}w d\mu\right|d\mu\le C(n,R)\fint_{B_R(p)} (|\nabla^2u_i|+|\nabla^2u_j|)d\mu.
\end{align}

By Lemma \ref{lm2.1} (1) proved above, we have
\begin{align}
\fint_{B_R(p)}|\nabla^2 u_i|^2 d\mu< \Psi(\delta|R,n), \forall i=1,...,k.
\end{align}

By H\"older inequality, we have
\begin{align}\label{Pciq4}
\fint_{B_R(p)}|\nabla^2 u_i| d\mu\le \left(\fint_{B_R(p)}|\nabla^2 u_i|^2 d\mu\right)^\frac{1}{2}, \forall i=1,...,k.
\end{align}

Combining \eqref{Pciq0}-\eqref{Pciq4}, we have
\begin{align}\label{Pciq5}
\fint_{B_R(p)}\left|w-\fint_{B_R(p)}w d\mu\right|d\mu\le \Psi(\delta|R,n).
\end{align}

By triangle inequality, we have
\begin{align}\label{Pciq6}
|w|\le \left|w-\fint_{B_R(p)}w d\mu\right|+\left|\fint_{B_R(p)}w d\mu\right|.
\end{align}

By Lemma \ref{alor}, we have
\begin{align}\label{Pciq7}
\fint_{B_R(p)}\left|\fint_{B_R(p)}w d\mu\right|d\mu&=\left|\fint_{B_R(p)}w d\mu\right|\fint_{B_R(p)}d\mu\notag\\
&=\left|\fint_{B_R(p)}w d\mu\right|\notag\\
&<\Psi(R^{-1}|n).
\end{align}

Combining \eqref{Pciq5}-\eqref{Pciq7}, we have
\begin{align}\label{Pciq8}
\fint_{B_R(p)}|w| d\mu
\le\fint_{B_R(p)}\left|w-\fint_{B_R(p)}w d\mu\right|d\mu+\fint_{B_R(p)}\left|\fint_{B_R(p)}w d\mu\right|d\mu\le \Psi(\delta|R,n)+\Psi(R^{-1}|n).
\end{align}

Thus for any $\epsilon>0$, we can choose a sufficiently large $R_2=R_2(n,\epsilon)$ depending only on $\epsilon$ and $n$, such that the $\Psi(R^{-1}|n)$ in \eqref{Pciq8} satisfies $\Psi(R^{-1}|n)<\epsilon/4$.

Then we choose a small $\delta_1(n,\epsilon)$ depending only on $\epsilon$ and $n$, such that for any $\delta\in (0,\delta_1]$ the $\Psi(\delta|R_2,n)$ in \eqref{Pciq8} satisfies $\Psi(\delta|R_2,n)<\epsilon/4$.

Therefore we have
\begin{align}\label{Pciq9}
\fint_{B_{R_2}(p)}|w| d\mu<\frac{\epsilon}{2}, \forall \delta\in (0,\delta_1].
\end{align}

For any $r\in [C_0(n),R_2(n,\epsilon)]$, where the $C_0(n)$ is given in Lemma \ref{lm2.5}, we have
\begin{align}\label{Pciq10}
\fint_{B_r(p)}|w| d\mu\le \frac{\mu(B_{R_2}(p))}{\mu(B_r(p))}\fint_{B_{R_2}(p)}|w| d\mu.
\end{align}

By Lemma \ref{lm2.5}, for any $r\ge C_0(n)$, we have
\begin{align}\label{Pciq11}
\frac{\mu(B_{R_2}(p))}{\mu(B_r(p))}<\frac{\mu(M)}{\mu(B_r(p))}<2.
\end{align}

Combining \eqref{Pciq9}-\eqref{Pciq11}, we have
\begin{align}\label{pp34}
\fint_{B_r(p)}|w| d\mu<\epsilon,  \forall \delta\in (0,\delta_1(n,\epsilon)], r\in [C_0(n),R_2(n,\epsilon)].
\end{align}

For any $r\in [C_0(n),R_2(n,\epsilon)]$, we choose $\epsilon$ sufficiently small such that $r\in [C_0(n),R_2(n,\epsilon)]$. Thus we can take $\delta_0=\delta(n,r,\epsilon)$ sufficiently small, such that
\begin{align}\label{pp35}
\fint_{B_r(p)}|w| d\mu<\epsilon,  \forall \delta\in (0,\delta_0(n,r,\epsilon)].
\end{align}
which proves (2).

For (3), since $\Delta_f u_i=-\lambda_i(\Delta_f) u_i$ where $\lambda_i(\Delta_f)\in [\frac{1}{2},1)$, by the gradient estimate (Lemma \ref{Gradestm} (2)), we have
\begin{align}\label{Pciq35}
\sup_{B_r(p)}(|\nabla u_i|^2-1)_+\le C(n,r,A)\fint_{B_{2r}(p)} (|\nabla u_i|^2-1)_+d\mu\le C(n,r,A)\fint_{B_{2r}(p)} \left||\nabla u_i|^2-1\right|d\mu.
\end{align}

Let $j=i$, then $w=|\nabla u_i|^2-1$, then by Lemma \ref{lm2.1} (2) proved above, for any $\epsilon>0$, there exists $\delta_1=\delta_1(n,\epsilon)$ depending only on $\epsilon$ and $n$, such that
\begin{align}\label{Pciq34}
\fint_{B_{2r}(p)}\left||\nabla u_i|^2-1\right|d\mu<\epsilon, \forall \delta\in(0,\delta_1], r\ge C_0(n).
\end{align}

By \eqref{Pciq35} and \eqref{Pciq34}, we have for any $\epsilon>0$, there exists $\delta_0=\delta_0(n,\epsilon)$ depending only on $\epsilon$ and $n$, such that
\begin{align}
\sup_{B_r(p)} |\nabla u_i| <1+\epsilon, \forall \delta\in(0,\delta_0], r\ge C_0(n),
\end{align}
which gives (3), thus the lemma is proved.
\end{proof}

\section{proof of Theorem \ref{thm1.2}}
In this section, we prove Theorem \ref{thm1.2} following the approach in \cite{CC96}, see also \cite{CJN21} and \cite{WZ19}. Let us reclaim it again here:
\begin{theorem}\label{thmap}
For any $r,\epsilon>0$, there exists a $\delta=\delta(n,r,\epsilon)>0$, such that for any complete gradient shrinking Ricci soliton $(M^n,g,f)$, if $\lambda_k(\Delta_f)\le \frac{1}{2}+\delta$ for some $\delta>0$ for some nonnegative integral $k$, then
\begin{align*}
d_{\text{GH}}(B_{r}(p),B_{r}(\bar p,0^k))<\epsilon,
\end{align*}
for some product length space $(\bar p,0^k)\in X\times \mathbb{R}^k$.
\end{theorem}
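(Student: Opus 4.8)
The plan is to carry out the Cheeger--Colding almost-splitting scheme, using the eigenfunctions from Lemma~\ref{lm2.1} as approximate affine coordinates on an $\mathbb{R}^k$-factor. Fix an auxiliary radius $\rho=\rho(n,r)\ge\max\{C_0(n),10r\}$ and let $u_1,\dots,u_k$ be the eigenfunctions furnished by Lemma~\ref{lm2.1} on $B_\rho(p)$, so that $\rho^2\fint_{B_\rho(p)}|\nabla^2 u_i|^2\,d\mu$, $\fint_{B_\rho(p)}|\langle\nabla u_i,\nabla u_j\rangle-\delta_{ij}|\,d\mu$ and $\sup_{B_\rho(p)}(|\nabla u_i|-1)_+$ are all $\le\Psi(\delta|n,r)$; by Lemma~\ref{volcomp} the same bounds (up to a factor $C(n,r)$) hold with $B_\rho(p)$ replaced by $B_{3r}(p)$. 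Put $\Phi=(u_1,\dots,u_k)\colon B_\rho(p)\to\mathbb{R}^k$, a $(1+\Psi)$-Lipschitz map, and define on $B_{2r}(p)$ the length pseudo-distance
\[
d_X(x,y):=\inf_\sigma\int_0^{L(\sigma)}\sqrt{\,\big(1-\textstyle\sum_{i=1}^k\langle\nabla u_i,\sigma'\rangle^2\big)_+\,}\;ds ,
\]
the infimum over unit-speed paths $\sigma\colon[0,L(\sigma)]\to B_{2r}(p)$ from $x$ to $y$. Let $X:=B_{2r}(p)/\{d_X=0\}$ with quotient map $q$ and basepoint $\bar p:=q(p)$, and take as target $F:=(q,\Phi)\colon B_r(p)\to X\times\mathbb{R}^k$. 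The theorem will follow once $F$ is shown to be an $\epsilon$-Gromov--Hausdorff approximation onto $B_r((\bar p,0^k))$.

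First I would convert the ball estimates above into estimates along minimal geodesics via the segment inequality (Lemma~\ref{segie}). Applying Lemma~\ref{segie} with $A_1=A_2=B_r(p)$ (all connecting geodesics then lie in $B_{2r}(p)$) and $e=\sum_i|\nabla^2 u_i|^2$, and again with $e=\sum_{i,j}|\langle\nabla u_i,\nabla u_j\rangle-\delta_{ij}|$, one discards a subset of $B_r(p)\times B_r(p)$ of $(\mu\times\mu)$-measure $\le\Psi(\delta|n,r)\,\mu(B_r(p))^2$ so that every remaining (``good'') pair $(x,y)$ satisfies
\[
\int_0^{d(x,y)}\sum_i|\nabla^2 u_i|(\gamma_{xy}(s))\,ds\le\Psi ,\qquad
\int_0^{d(x,y)}\sum_{i,j}\big|\langle\nabla u_i,\nabla u_j\rangle-\delta_{ij}\big|(\gamma_{xy}(s))\,ds\le\Psi .
\]
The first bound forces $s\mapsto\langle\nabla u_i,\gamma_{xy}'\rangle(\gamma_{xy}(s))$ to have total variation $\le\Psi$, hence $u_i\circ\gamma_{xy}$ is uniformly $\Psi$-close to the affine function $s\mapsto u_i(x)+s\,a_i$ with $a_i:=\langle\nabla u_i(x),\gamma_{xy}'(0)\rangle$; in particular $|u_i(y)-u_i(x)-d(x,y)\,a_i|\le\Psi$. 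Feeding in the second bound (which makes $\langle\nabla u_i,\nabla u_j\rangle\approx\delta_{ij}$ along $\gamma_{xy}$) together with $|\gamma_{xy}'|\equiv1$, one obtains for good pairs the Pythagorean estimate
\[
\big|\,d(x,y)^2-|\Phi(x)-\Phi(y)|^2-d_X(x,y)^2\,\big|\le\Psi(\delta|n,r) .
\]

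Next I would upgrade this from good pairs to arbitrary pairs of $B_r(p)$ by the Cheeger--Colding chaining argument: volume doubling (Lemma~\ref{volcomp}) together with the measure bound on the exceptional set lets one join any $x,y\in B_r(p)$ by a controlled chain $x=x_0,x_1,\dots,x_N=y$ with each consecutive pair good and $\sum_j d(x_{j-1},x_j)\le d(x,y)+\Psi$, so that the accumulated error remains a $\Psi$; this is the step that uses the extra room between $r$ and $\rho$. One then reads off from the Pythagorean estimate that $F$ is $\Psi$-distance preserving, $\big|\,d(x,y)^2-d_X(q(x),q(y))^2-|\Phi(x)-\Phi(y)|^2\,\big|\le\Psi$, and that $F(B_r(p))$ is $\Psi$-dense in $B_r((\bar p,0^k))$: density in the $X$-direction is immediate from the definition of the quotient, while $\Phi$ covers a $\Psi$-net of the Euclidean ball $B_r^k(0)$ by a degree/continuity argument using that $\nabla u_1,\dots,\nabla u_k$ are $L^1$-almost orthonormal with norms $\le 1+\Psi$. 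Since $X$ is the quotient of a length ball by a length pseudo-distance, it is a pointed, locally compact length space, so $B_r((\bar p,0^k))\subset X\times\mathbb{R}^k$ is meaningful; taking $\delta=\delta(n,r,\epsilon)$ small enough that every $\Psi$ above is $<\epsilon$ completes the proof.

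The main obstacle is precisely this passage from ``good pairs'' to ``all pairs'': the Pythagorean relation is controlled only off an exceptional set, and making it hold for every pair with total error still a $\Psi$ requires the careful covering/chaining machinery of \cite{CC96} (see also \cite{CJN21,WZ19}), which is why one works on the enlarged ball $B_\rho(p)$ and must track how each $\Psi$ depends on $n$ and $r$. A secondary difficulty is the near-surjectivity of $\Phi$ onto $B_r^k(0)$: since $\Phi$ is only an approximate Riemannian submersion, covering the Euclidean ball up to a $\Psi$ needs a topological (degree-theoretic) input rather than an explicit section, and one must also rule out any degeneration of the constructed $X$ incompatible with the asserted ball $B_r((\bar p,0^k))$.
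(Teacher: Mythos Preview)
Your route diverges from the paper's in almost every structural choice, and one of those choices creates a real gap.

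\textbf{Where the two arguments differ.} The paper takes $X$ to be the \emph{level set} $\mathbf{u}^{-1}(\mathbf{u}(p))\subset M$ with its induced metric, and sets $F(x)=(\bar x,\mathbf{u}(x))$ where $\bar x$ is a nearest point of that level set. It then proves a Pythagorean lemma (Lemma~\ref{Pyth}) for triangles with one side nearly tangent to a level set and one side nearly a $u$-gradient segment, together with Lemmas~\ref{Jianglm3.8}--\ref{Jianglm3.9} which compare $d(x,u^{-1}(a))$ to $|u(x)-a|$ and construct points on prescribed level sets. The passage from ``good'' points to arbitrary points is by \emph{perturbation}: given $x,y,z$ one moves to nearby $\tilde x,\tilde y,\tilde z$ for which the segment inequality gives control, and the uniform bound $\sup|\nabla u_i|\le 1+\Psi$ absorbs the displacement error. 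Finally, the paper handles general $k$ by first doing $k=1$ and then arguing by contradiction: if the limit splits only as $X\times\mathbb{R}^\ell$ with $\ell<k$, the $u_i$ would be $\Psi$-close to linearly dependent affine functions, contradicting (u2) via a gradient computation. There is no chaining and no degree theory.

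\textbf{The gap in your approach.} With your definition
\[
d_X(x,y)=\inf_{\sigma}\int_0^{L(\sigma)}\sqrt{\big(1-\textstyle\sum_i\langle\nabla u_i,\sigma'\rangle^2\big)_+}\,ds,
\]
the infimum is over \emph{all} paths in $B_{2r}(p)$, not just the minimizing geodesic. Your argument for the Pythagorean estimate only tests the minimizing geodesic $\gamma_{xy}$, which gives $d_X(x,y)^2+|\Phi(x)-\Phi(y)|^2\le d(x,y)^2+\Psi$. The reverse inequality $d(x,y)^2\le d_X(x,y)^2+|\Phi(x)-\Phi(y)|^2+\Psi$ is the substance of almost-splitting and you have not supplied it: nothing prevents a long path $\sigma$ in $B_{2r}(p)$ whose tangent is $\Psi$-close to $\mathrm{span}(\nabla u_1,\dots,\nabla u_k)$ along its entire length (so its transverse integral is tiny) while $|\Phi(x)-\Phi(y)|$ stays small because the $u_i$-values oscillate. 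The ``good pair'' hypothesis only controls integrals along $\gamma_{xy}$, so it says nothing about such competitor paths; and the pointwise bound $|\nabla u_i|\le 1+\Psi$ is the wrong direction to rule them out. In the paper's scheme this problem never arises because $d_X$ is the restricted $M$-distance on a fixed level set, and the inequality $d(x,\bar x)\ge |u(x)-u(\bar x)|-\Psi$ needed for the Pythagorean lemma is obtained constructively from Lemma~\ref{TBineq} and (u3). Your chaining step and the degree-theoretic surjectivity are secondary issues by comparison; even if they were made precise, the missing lower bound on $d_X$ would still leave the almost-isometry unproven.
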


Let $(M^n,g,f)$ be a complete gradient shrinking Ricci soliton and $p$ be a minimum point of $f$. Suppose $\lambda_k(\Delta_f)\le \frac{1}{2}+\delta$ for some nonnegative integral $k$. Then by Lemma \ref{lm2.1}, for any $r\ge C_0(n)$, there exists $u_i(i=1,...,k)$ on $B_{100r}(p)$ such that

\begin{enumerate}
	\item[(u1)] $(100r)^2\fint_{B_{100r}(p)}|\nabla^2 u_i|^2 d\mu< \Psi(\delta|r,n)$,
\item[(u2)] $\fint_{B_{100r}(p)}|\langle\nabla u_i,\nabla u_j\rangle-\delta_{ij}| d\mu< \Psi(\delta|r,n), \forall i,j=1,...,k$,
\item[(u3)] $\sup_{B_{100r}(p)}|\nabla u_i|<1+\Psi(\delta|r,n)$,
\item[(u4)] $\Delta_f u_i=-\lambda_i(\Delta_f)u_i$, where $\lambda_i(\Delta_f)\in [\frac{1}{2},\frac{1}{2}+\delta]$.
	\end{enumerate}

	For the definition of the notation $\Psi$, see \eqref{Psi}.

We denote $\mathbf{u}:=(u_1,...,u_k)$ and $X:=\mathbf{u}^{-1}(\{\mathbf{u}(p)\})$ (which is the inverse image of $\{\mathbf{u}(p)\}\subset \mathbb{R}^k$). For any $x\in B_r(p)$, we can choose a $\bar x\in X$ such that $d(x,\bar x)=d(x,X)$. Define a map
\begin{align}
F:B_r(p)&\to X\times \mathbb{R}^k,\notag\\
x&\mapsto (\bar x,\mathbf{u}(x)).
\end{align}

We will prove that $F$ is a $\Psi(\delta|r,n)$-Gromov Hausdorff map from $B_r(p)$ to $B_r(\bar p, \mathbf{u}(p))$.

 Firstly we have the following quantitative version
of the Pythagorean theorem:
\begin{lemma}\label{Pyth}
Let $(M^n,g,f)$ be a complete gradient shrinking Ricci soliton and $p$ be a minimum point of $f$. For any $u\in C^\infty(B_{100r}(p))$ satisfying (u1)-(u3), let $x,y,z\in B_r(p)$ such that $|u(x)-u(z)|<\delta$, $|d(y,z)-\left|u(y)-u(z)|\right|<\delta$, then there holds
\begin{align}
\left|d^2(x,z)+d^2(y,z)-d^2(x,y)\right|<\Psi(\delta|r,n).
\end{align}
\end{lemma}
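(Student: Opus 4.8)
The plan is to exploit that $u$ is an \emph{almost affine} function on $B_{100r}(p)$: by (u3) it is $1$-Lipschitz up to an error $\Psi(\delta|r,n)$, and by (u1) its Hessian is $L^2$-small, so along most minimal geodesics $u$ is essentially an affine function of arclength. The two hypotheses then say that $x$ and $z$ lie on essentially the same level set of $u$, while the minimal geodesic from $z$ to $y$ is essentially a unit-speed integral curve of $\nabla u$. Thus $(x,y,z)$ models a right triangle in $\RR^n$ with the right angle at $z$, and the statement is the quantitative Pythagorean theorem; I would follow the scheme of Cheeger--Colding \cite{CC96} (see also \cite{WZ19}).

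I would first dispose of trivial cases: replacing $u$ by $-u$ (which preserves (u1)--(u3) and the hypotheses) we may assume $u(y)\ge u(z)$; and if $d(x,z)<\sqrt\delta$ the conclusion is immediate, since $d(\cdot,y)$ is $1$-Lipschitz and $x,y,z\in B_r(p)$, so that $d^2(x,z)$ and $|d^2(x,y)-d^2(y,z)|$ are both $\le\Psi(\delta|r,n)$. Hence assume $d(x,z)\ge\sqrt\delta$. Next, from (u1), the volume comparison Lemma \ref{volcomp} and Cauchy--Schwarz, $\fint_{B_\rho(q)}|\nabla^2 u|\,d\mu\le\Psi(\delta|r,n)$ for every ball $B_\rho(q)\subset B_{50r}(p)$; feeding this into the segment inequality Lemma \ref{segie} makes $u$ almost affine along ``most'' minimal geodesics joining points of $B_{5r}(p)$, which contains all the geodesics appearing below (so that (u1)--(u3) are available along them). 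To make the subsequent pointwise manipulations rigorous one averages them over small concentric balls $B_s(x),B_s(y),B_s(z)$ and applies Lemma \ref{segie}, in particular with $B_s(x)$ paired with an $s$-tube around the $z$--$y$ geodesic, transferring the hypotheses with an extra $C(n)s$ error via (u3) and choosing $s=s(\delta,r,n)\to 0$ slowly at the end; I would suppress this routine bookkeeping and argue pointwise.

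Then I would record the behaviour of $u$ along a unit-speed minimal geodesic $\sigma:[0,b]\to M$ from $z$ to $y$, $b=d(y,z)$: by (u3), $(u\comp\sigma)'=\langle\nabla u,\dot\sigma\rangle\le|\nabla u|\le 1+\Psi$, while $\int_0^b(u\comp\sigma)'\,dt=u(y)-u(z)\ge b-\delta$, whence $\int_0^b|\langle\nabla u(\sigma(t)),\dot\sigma(t)\rangle-1|\,dt\le\Psi(\delta|r,n)$ and, expanding $|\nabla u-\dot\sigma|^2$, also $\int_0^b|\nabla u(\sigma(t))-\dot\sigma(t)|^2\,dt\le\Psi(\delta|r,n)$; consequently $\psi(t):=u(\sigma(t))-u(x)$ satisfies $|\psi(t)-t|\le\Psi(\delta|r,n)$ on $[0,b]$, since $|\psi(0)|=|u(z)-u(x)|<\delta$ and $\psi'=\langle\nabla u,\dot\sigma\rangle$. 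Now set $\rho(t):=d(x,\sigma(t))$, a $1$-Lipschitz function with $\rho(0)=d(x,z)$ and $\rho(b)=d(x,y)$, and with $\frac{1}{2}(\rho^2)'(t)=\langle-\exp_{\sigma(t)}^{-1}(x),\dot\sigma(t)\rangle$ for a.e.\ $t$ (those with $\sigma(t)\notin\mathrm{Cut}(x)$). I would replace $\dot\sigma$ by $\nabla u$ at a cost of $4r\int_0^b|\nabla u-\dot\sigma|\,dt\le\Psi(\delta|r,n)$, and then use $\langle-\exp_{\sigma(t)}^{-1}(x),\nabla u(\sigma(t))\rangle=\rho(t)\frac{d}{ds}(u\comp c_t)|_{s=\rho(t)}$, where $c_t$ is the unit-speed minimal geodesic from $x$ to $\sigma(t)$; since $u$ is almost affine along $c_t$ for a.e.\ $t$, this equals $\psi(t)$ up to an error integrating to $\Psi(\delta|r,n)$ over $[0,b]$. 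Integrating over $[0,b]$, with every equality below understood up to an additive error $\le\Psi(\delta|r,n)$,
\begin{align*}
\frac{1}{2} d^2(x,y)-\frac{1}{2} d^2(x,z)&=\frac{1}{2}\rho^2(b)-\frac{1}{2}\rho^2(0)=\int_0^b\langle-\exp_{\sigma(t)}^{-1}(x),\dot\sigma(t)\rangle\,dt\\
&=\int_0^b\psi(t)\,dt+\Psi(\delta|r,n)=\int_0^b t\,dt+\Psi(\delta|r,n)\\
&=\frac{1}{2} d^2(y,z)+\Psi(\delta|r,n),
\end{align*}
and rearranging gives the claim.

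The main obstacle is the one step I glossed over: that $u$ is almost affine along the minimal geodesic $c_t$ from the fixed point $x$ to $\sigma(t)$ for \emph{a.e.}\ $t$. Since $x$ is a single point and $\{\sigma(t):t\in[0,b]\}$ a curve, Lemma \ref{segie} does not apply verbatim; one has to thicken $x$ to $B_s(x)$ and $\sigma$ to an $s$-tube, combine the $L^2$-Hessian bound (u1), the volume comparison Lemma \ref{volcomp} and a Chebyshev argument to extract a definite fraction of ``good'' geodesics, and then transfer back to $x$ using the Lipschitz bound (u3). This is the technical heart of the argument, inherited from \cite{CC96}; it is essentially the only point where the shrinking soliton hypothesis enters, through Lemmas \ref{volcomp} and \ref{segie} (which in turn rest on $\Ric_f=\frac{1}{2}g\ge 0$).
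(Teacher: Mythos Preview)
Your proposal is correct and follows essentially the same Cheeger--Colding scheme as the paper: first variation of arc length along the $z$--$y$ geodesic, replacement of $\dot\sigma$ by $\nabla u$ via (u2)--(u3), and almost-affinity of $u$ along the transverse geodesics $c_t$ via the $L^2$-Hessian bound (u1) combined with the segment inequality. The only organizational difference is that the paper handles the ``thickening'' you describe not by $s$-tubes but by invoking an iterated segment inequality on $B_{2r}(p)\times B_{2r}(p)\times B_{2r}(p)$ (their Lemma~\ref{lmseg}(2)) and then selecting perturbed points $\tilde x,\tilde y,\tilde z$ in $B_{\Psi}(x),B_{\Psi}(y),B_{\Psi}(z)$ along whose associated geodesics the integrands $|\nabla^2 u|$ and $\big||\nabla u|^2-1\big|$ are simultaneously small; this is equivalent to, and slightly cleaner than, the tube-and-Chebyshev argument you sketch.
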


We will use the following segment inequality in the proof of Lemma \ref{Pyth}:
\begin{lemma}\label{lmseg}
Let $(M^n,g,f)$ be a gradient shrinking Ricci soliton, $p$ be a minimum point of $f$, then for any $e\in C^\infty(M)$, there holds
\begin{enumerate}
	\item
\begin{align}
\fint_{B_{2r}(p)\times B_{2r}(p)}\int_0^{d(x_1,x_2)}e(\gamma_{x_1x_2}(s))dsd(\mu\times\mu)\le  C(n,r)\fint_{B_{6r}(p)} ed\mu,
\end{align}
\item
\begin{align}
\fint_{B_{2r}(p)\times B_{2r}(p)\times B_{2r}(p)}\int_0^{d(x_1,x_2)}\int_0^{d(x_1,\gamma_{x_2x_3}(t))} e(\gamma_{x_1\gamma_{x_2x_3}(t)}(s))dsdtd(\mu\times\mu\times\mu)\le  C(n,r)\fint_{B_{6r}(p)} ed\mu,
\end{align}
\end{enumerate}
where $\gamma_{x_1 x_2}$ is a minimal geodesic from $x_1$ to $x_2$ and $\mu\times\mu$ is the product measure.
\end{lemma}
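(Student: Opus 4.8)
The plan is to derive both estimates by iterating the segment inequality of Lemma~\ref{segie}, converting the raw integrals that result into normalized averages by the volume comparison of Lemma~\ref{volcomp}, and using the triangle inequality at each step to confine every geodesic that appears to a ball of radius a fixed multiple of $r$ about $p$; throughout I take $e\ge 0$, which is the case of interest.

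First I would prove (1) by a single application of Lemma~\ref{segie} with $A_1=A_2=B_{2r}(p)$ and $q=p$. The containment one checks is: for $x_1,x_2\in B_{2r}(p)$ and any minimal geodesic $\gamma_{x_1x_2}$, one has $d(\gamma_{x_1x_2}(t),p)\le\tfrac12\big(d(x_1,x_2)+d(x_1,p)+d(x_2,p)\big)<4r$ for every $t$, so $\cup\gamma_{x_1x_2}\subset B_{4r}(p)\subset B_{6r}(p)$. Hence Lemma~\ref{segie}, applied with radius $6r$, gives
\[
\int_{B_{2r}(p)\times B_{2r}(p)}\int_0^{d(x_1,x_2)}e(\gamma_{x_1x_2}(s))\,ds\,d(\mu\times\mu)\;\le\;C(n,r)\,\mu\big(B_{2r}(p)\big)\int_{B_{6r}(p)}e\,d\mu .
\]
Dividing by $\mu(B_{2r}(p))^2$ and rewriting the right-hand side as $C(n,r)\,\tfrac{\mu(B_{6r}(p))}{\mu(B_{2r}(p))}\fint_{B_{6r}(p)}e\,d\mu$, the ratio $\mu(B_{6r}(p))/\mu(B_{2r}(p))$ is $\le C(n,r)$ by Lemma~\ref{volcomp}, which is (1).

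For (2) I would iterate the same move. Fix a Borel selection of minimal geodesics and set $h(x,y):=\int_0^{d(x,y)}e(\gamma_{xy}(s))\,ds$; then $h$ is Borel and locally bounded, and $h(x_1,\gamma_{x_2x_3}(t))$ is exactly the inner integral $\int_0^{d(x_1,\gamma_{x_2x_3}(t))}e(\gamma_{x_1,\gamma_{x_2x_3}(t)}(s))\,ds$ appearing in (2). For each fixed $x_1\in B_{2r}(p)$, the quantity $\int_{B_{2r}(p)\times B_{2r}(p)}\int_0^{d(x_2,x_3)}h(x_1,\gamma_{x_2x_3}(t))\,dt\,d(\mu\times\mu)$ is the left-hand side of the segment inequality for the sets $B_{2r}(p),B_{2r}(p)$ with integrand $h(x_1,\cdot)$; since $\gamma_{x_2x_3}\subset B_{4r}(p)$ as above, Lemma~\ref{segie} (radius $4r$, $q=p$) bounds it by $C(n,r)\,\mu(B_{2r}(p))\int_{B_{4r}(p)}h(x_1,y)\,d\mu(y)$. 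Integrating over $x_1\in B_{2r}(p)$ leaves $\int_{B_{2r}(p)\times B_{4r}(p)}h(x_1,y)\,d(\mu\times\mu)$, which is again the left-hand side of the segment inequality, now for $B_{2r}(p),B_{4r}(p)$ with integrand $e$; here $d(\gamma_{x_1y}(s),p)\le\tfrac12\big(d(x_1,y)+d(x_1,p)+d(y,p)\big)<6r$, so $\gamma_{x_1y}\subset B_{6r}(p)$ and Lemma~\ref{segie} (radius $6r$, $q=p$) bounds this by $C(n,r)\,\mu(B_{4r}(p))\int_{B_{6r}(p)}e\,d\mu$. Collecting, the triple integral in (2) is at most $C(n,r)\,\mu(B_{2r}(p))\,\mu(B_{4r}(p))\int_{B_{6r}(p)}e\,d\mu$; dividing by $\mu(B_{2r}(p))^3$ and absorbing $\mu(B_{4r}(p))\mu(B_{6r}(p))/\mu(B_{2r}(p))^2\le C(n,r)$ via Lemma~\ref{volcomp} gives (2). (If the upper limit of the outer integral in (2) is something $\le d(x_2,x_3)$ rather than $d(x_2,x_3)$ itself, the left-hand side only shrinks, so the bound still holds.)

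The triangle-inequality bookkeeping — keeping every geodesic inside $B_{6r}(p)$, where openness of the balls yields the strict inequalities used above — and the passage from raw integrals to averages via Lemma~\ref{volcomp} are routine. The one step I expect to need a word of care is the iteration in (2): it feeds the intermediate function $h(x_1,\cdot)$, which is only Borel and locally bounded rather than smooth, into Lemma~\ref{segie}. This is harmless, since the proof of Lemma~\ref{segie1} is just the Jacobian comparison along minimal geodesics and uses only nonnegativity and measurability of the integrand, and a Borel selection of minimal geodesics makes $h$ jointly Borel; but it is the only place where the argument is not a mechanical double use of the $C^\infty$ statement, so it is where I would be most careful.
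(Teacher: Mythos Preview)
Your proof is correct and follows essentially the same route as the paper: apply Lemma~\ref{segie} with $A_1=A_2=B_{2r}(p)$, confine all geodesics to $B_{6r}(p)$ by the triangle inequality, and then convert to averages via a volume ratio bound; for part~(2), iterate the segment inequality exactly as you describe. The only cosmetic difference is that the paper controls $\mu(B_{6r}(p))/\mu(B_{2r}(p))$ by invoking Lemma~\ref{lm2.5}(1) (after assuming $r>C_0(n)$ without loss of generality), whereas you use Lemma~\ref{volcomp} directly; both work, and your treatment of the measurability issue in the iteration---as well as your remark on the upper limit of the $t$-integral---is more careful than the paper's one-line ``using (1) twice.''
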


\begin{proof}
For (1),
Let $A_1=A_2=B_2(p)$, then $D:=\sup\{d(x_1,x_2):x_1\in A_1,x_2\in A_2\}=4r$.

And for any $x_3\in \gamma_{x_1x_2}$, by triangle inequality we have
\begin{align}
d(x_1,x_3)\le d(x_1,x_2)\le d(p,x_1)+d(p,x_2)<2r+2r=4r,
\end{align}
\begin{align}
d(p,x_3)\le d(p,x_1)+d(x_1,x_3)<2r+4r=6r.
\end{align}

Thus we have $\cup_{x_1\in A_1,x_2\in A_2}\gamma_{x_1x_2}\subset B_{6r}(p)$, and by Lemma \ref{segie}, we have
\begin{align}\label{segapp}
\int_{B_{2r}(p)\times B_{2r}(p)}\int_0^{d(x_1,x_2)}e(\gamma_{x_1x_2}(s))dsd(\mu\times\mu)&\le C(n,r)D\left(\mu(B_{2r}(p))+\mu(B_{2r}(p))\right)\int_{B_{6r}(p)} ed\mu\notag\\
&\le C(n,r)\mu(B_{2r}(p)\int_{B_{6r}(p)} ed\mu,
\end{align}

Thus by \eqref{segapp} we have
\begin{align}\label{segapp2}
\fint_{B_{2r}(p)\times B_{2r}(p)}\int_0^{d(x_1,x_2)}e(\gamma_{x_1x_2}(s))dsd(\mu\times\mu)&=\frac{1}{\mu(B_{2r}(p)^2}\int_{B_{2r}(p)\times B_{2r}(p)}\int_0^{d(x_1,x_2)}e(\gamma_{x_1x_2}(s))dsd(\mu\times\mu)\notag\\
 &\le C(n,r)\frac{1}{\mu(B_{2r}(p)}\int_{B_{6r}(p)} ed\mu\notag\\
 &= C(n,r)\frac{\mu(B_{6r}(p)}{\mu(B_{2r}(p)}\fint_{B_{6r}(p)} ed\mu.
\end{align}

Without loss of generality we assume $r>C_0(n)$ for the $C_0(n)$ in Lemma \ref{lm2.5}, then by Lemma \ref{lm2.5} (1), we have
\begin{align}\label{segapp1}
\frac{\mu(B_{6r}(p)}{\mu(B_{2r}(p)}\le \frac{\mu(B_{M}(p)}{\mu(B_{2r}(p)}\le \frac{1}{2}.
\end{align}

By \eqref{segapp2} and \eqref{segapp1}, we have
\begin{align}
\fint_{B_{2r}(p)\times B_{2r}(p)}\int_0^{d(x_1,x_2)}e(\gamma_{x_1x_2}(s))dsd(\mu\times\mu)\le  C(n,r)\fint_{B_{6r}(p)} ed\mu,
\end{align}
which proves (1).

For (2), using (1) twice we have
\begin{align*}
&\fint_{B_{2r}(p)\times B_{2r}(p)\times B_{2r}(p)}\int_0^{d(x_1,x_2)}\int_0^{d(x_1,\gamma_{x_2x_3}(t))} e(\gamma_{x_1\gamma_{x_2x_3}(t)}(s))dsdtd(\mu\times\mu\times\mu)\\
\le &\fint_{B_{2r}(p)\times B_{2r}(p)}\int_0^{d(x_1,x_4)} e(\gamma_{x_1x_4}(s))dsd(\mu\times\mu)\\
\le & C(n,r)\fint_{B_{6r}(p)} ed\mu,
\end{align*}
which proves (2). Thus the lemma is proved.
\end{proof}

Now we prove Lemma \ref{Pyth}:
\begin{proof}[Proof of Lemma \ref{Pyth}]
Let $e=|\nabla^2 u|^2+\left||\nabla u|^2-1\right|$. By Lemma \ref{lmseg} we have
\begin{align}\label{segctrle1}
\fint_{B_{2r}(p)\times B_{2r}(p)}\int_0^{d(x_1,x_2)}e(\gamma_{x_1x_2}(s))dsd(\mu\times\mu)\le  C(n,r)\fint_{B_{6r}(p)} ed\mu,
\end{align}
\begin{align}\label{segctrle2}
\fint_{B_{2r}(p)\times B_{2r}(p)\times B_{2r}(p)}\int_0^{d(x_1,x_2)}\int_0^{d(x_1,\gamma_{x_2x_3}(t))} e(\gamma_{x_1\gamma_{x_2x_3}(t)}(s))dsdtd(\mu\times\mu\times\mu)\le  C(n,r)\fint_{B_{6r}(p)} ed\mu.
\end{align}
The right hand side of inequalities above is controlled by
\begin{align}\label{segctrle3}
\fint_{B_{6r}(p)} ed\mu\le \frac{\mu(B_{100r}(p)}{\mu(B_{6r}(p)}\fint_{B_{100r}(p)} ed\mu.
\end{align}

Without loss of generality we assume $r>C_0(n)$ for the $C_0(n)$ in Lemma \ref{lm2.5}, then by Lemma \ref{lm2.5} (1), we have
\begin{align}\label{segctrle4}
\frac{\mu(B_{100r}(p)}{\mu(B_{6r}(p)}\le \frac{\mu(B_{M}(p)}{\mu(B_{6r}(p)}\le \frac{1}{2}.
\end{align}

Combining (u1), (u2) and \eqref{segctrle1}-\eqref{segctrle4}, we have
\begin{align}\label{segctrle5}
\fint_{B_{2r}(p)\times B_{2r}(p)}\int_0^{d(x_1,x_2)}e(\gamma_{x_1x_2}(s))dsd(\mu\times\mu)< \Psi(\delta|r,n),
\end{align}
and
\begin{align}\label{segctrle6}
\fint_{B_{2r}(p)\times B_{2r}(p)\times B_{2r}(p)}\int_0^{d(x_1,x_2)}\int_0^{d(x_1,\gamma_{x_2x_3}(t))} e(\gamma_{x_1\gamma_{x_2x_3}(t)}(s))dsdtd(\mu\times\mu\times\mu)<  \Psi(\delta|r,n).
\end{align}

For any $x,y,z$ satisfying the assumption of the lemma, by \eqref{segctrle5}, we can take $\tilde x,\tilde y,\tilde z$ in $B_{\Psi(\delta|r,n)}(x),B_{\Psi(\delta|r,n)}(y),B_{\Psi(\delta|r,n)}(z)$ respectively, such that
\begin{align}\label{nabla u 2}
\int_0^{d(\tilde y, \tilde z)}\left||\nabla u|^2-1\right|(\sigma(s))ds\le \Psi(\delta|r,n),
\end{align}
where $\sigma:[0,d(\tilde y, \tilde z)]\to B_{100r}(p)$ is the unique minimal geodesic with unit velocity from $\tilde y$ to $\tilde z$.

Moreover, by \eqref{segctrle6}, $\tilde y,\tilde z$ could be chosen such that there exists a full measure open subset $U\subset [0,d(\tilde y, \tilde z)]$, such that for any $s\in U$ there is a unique minimal geodesic  with unit velocity  $\tau_s:[0,l(s)]\to B_{100r}(p)$ from $\tilde x$ to $\sigma(s)$, where $l(s):=d(\tilde x,\sigma(s))$, and
\begin{align}\label{hessian estm}
\int_U\int_0^{l(s)}|\nabla^2 u(\tau_s(t))|dtds\le \Psi(\delta|r,n).
\end{align}

We want to prove that $\nabla u$ is close to $\sigma'$ along $\sigma$ in integral sense. Firstly we estimate their inner product in integral sense. Without loss of generality we assume $u(z)>u(y)=0$. By the fundamental theorem of calculus we have
\begin{align}\label{Jlm3.7eq1}
\left|\int_0^{d(\tilde y, \tilde z)}\left(\langle \nabla u,\sigma'(s)\rangle-1\right)ds\right|
&=\left|\int_0^{d(\tilde y, \tilde z)}\left((u\circ\sigma)'(s)-1\right)ds\right|\\\notag
&=|u(\tilde z)-u(\tilde y)-d(\tilde y, \tilde z)|
\end{align}

Recall that $\tilde y,\tilde z$  belong to $B_{\Psi(\delta|r,n)}(y),B_{\Psi(\delta|r,n)}(z)$ respectively and in the assumption of the lemma we have $|d(y,z)-\left|u(y)-u(z)|\right|<\delta$, thus we have
\begin{align}\label{Jlm3.7eq2}
|u(\tilde z)-u(\tilde y)-d(\tilde y, \tilde z)|\le \Psi(\delta|r,n).
\end{align}

By \eqref{Jlm3.7eq1} and \eqref{Jlm3.7eq2} we have
\begin{align}\label{Jlm3.7eq3}
\left|\int_0^{d(\tilde y, \tilde z)}\left(\langle \nabla u,\sigma'(s)\rangle-1\right)ds\right|
\le \Psi(\delta|r,n).
\end{align}

To estimate the difference between $\nabla u$ and $\sigma'$ in integral sense, we calculate that
\begin{align}\label{Jlm3.7eq4}
\int_0^{d(\tilde y, \tilde z)}|\nabla u(\sigma(s))-\sigma'(s)|^2ds
&= \int_0^{d(\tilde y, \tilde z)}|\nabla u(\sigma(s))|^2+|\sigma'(s)|^2-2\langle \nabla u(\sigma(s)),\sigma'(s)\rangle ds\notag\\
&\le \int_0^{d(\tilde y, \tilde z)}\left||\nabla u(\sigma(s))|^2-1\right| ds
+\int_0^{d(\tilde y, \tilde z)}\left||\sigma'(s)|^2-1\right|ds
+\int_0^{d(\tilde y, \tilde z)}
|2-2\langle \nabla u(\sigma(s)),\sigma'(s)\rangle| ds.
\end{align}

Combining \eqref{nabla u 2}, \eqref{Jlm3.7eq3} and \eqref{Jlm3.7eq4},  and recall that $|\sigma'|\equiv 1$, we have
\begin{align}\label{Jlm3.7eq5}
\int_0^{d(\tilde y, \tilde z)}|\nabla u(\sigma(s))-\sigma'(s)|^2ds
\le \Psi(\delta|r,n).
\end{align}

Now we estimate $d(y,z)$ in terms of $u$,
\begin{align}\label{Jlm3.7eq6}
\frac{1}{2}d(y,z)
&=d(\tilde y,\tilde z)\pm\Psi(\delta|r,n)\notag\\
&=\int_0^{d(\tilde y,\tilde z)}sds\pm\Psi(\delta|r,n).
\end{align}

To estimate the integral above in terms of $u$, by the fundamental theorem of calculus we have
\begin{align}\label{Jlm3.7eq7}
|u(\sigma(s))-u(\sigma(0))-s|
=\left|\int_0^s\left(\langle \nabla u,\sigma'(s')\rangle-1\right)ds'\right|.
\end{align}

To estimate $\left|\int_0^s\left(\langle \nabla u,\sigma'(s)\rangle-1\right)ds'\right|$, by H\"older inequality and (u3) we have
\begin{align}|\nabla u,\sigma'(s)|\le |\nabla u|\le 1+\Psi(\delta|r,n).
\end{align}

Thus we have
\begin{align}\label{Jlm3.7eq8}
\left|\int_0^{d(\tilde y,\tilde z)}\left(\langle \nabla u,\sigma'(s)\rangle-1\right)_+ds\right|\le \Psi(\delta|r,n),
\end{align}
where $\left(\langle \nabla u,\sigma'(s)\rangle-1\right)_+$ is the positive part of $\left(\langle \nabla u,\sigma'(s)\rangle-1\right)$

By \eqref{Jlm3.7eq1},  we have
\begin{align}\label{Jlm3.7eq9}
\left|\int_0^{d(\tilde y,\tilde z)}\left(\langle \nabla u,\sigma'(s)\rangle-1\right)_+-\left(\langle \nabla u,\sigma'(s)\rangle-1\right)_-ds\right|
=\left|\int_0^{d(\tilde y,\tilde z)}\left(\langle \nabla u,\sigma'(s)\rangle-1\right)ds\right|
\le \Psi(\delta|r,n),
\end{align}
where $\left(\langle \nabla u,\sigma'(s)\rangle-1\right)_-$ is the negative part of $\left(\langle \nabla u,\sigma'(s)\rangle-1\right)$.

Then by \eqref{Jlm3.7eq8} and \eqref{Jlm3.7eq9}, we have
\begin{align}\label{Jlm3.7eq10}
\left|\int_0^{d(\tilde y,\tilde z)}\left(\langle \nabla u,\sigma'(s)\rangle-1\right)_-ds\right|
&\le \left|\int_0^{d(\tilde y,\tilde z)}\left(\langle \nabla u,\sigma'(s)\rangle-1\right)_+-\left(\langle \nabla u,\sigma'(s)\rangle-1\right)_-ds\right|
+\left|\int_0^{d(\tilde y,\tilde z)}\left(\langle \nabla u,\sigma'(s)\rangle-1\right)_+ds\right|\notag\\
&\le \Psi(\delta|r,n).
\end{align}

Then by \eqref{Jlm3.7eq8} and \eqref{Jlm3.7eq10} we have
\begin{align}\label{Jlm3.7eq11}
\int_0^{d(\tilde y,\tilde z)}\left|\langle \nabla u,\sigma'(s)\rangle-1\right|ds
=\left|\int_0^{d(\tilde y,\tilde z)}\left(\langle \nabla u,\sigma'(s)\rangle-1\right)_++\left(\langle \nabla u,\sigma'(s)\rangle-1\right)_-ds\right|\le \Psi(\delta|r,n).
\end{align}

Thus we have
\begin{align}\label{Jlm3.7eq12}
|u(\sigma(s))-u(\sigma(0))-s|
&=\left|\int_0^s\left(\langle \nabla u,\sigma'(s')\rangle-1\right)ds'\right|\notag\\
&\le \int_0^{d(\tilde y,\tilde z)}\left|\langle \nabla u,\sigma'(s)\rangle-1\right|ds\notag\\
&\le \Psi(\delta|r,n).
\end{align}

Given \eqref{Jlm3.7eq12}, we can look back to \eqref{Jlm3.7eq6} and get
\begin{align}\label{Jlm3.7eq13}
\frac{1}{2}d^2(y,z)
&=\int_0^{d(\tilde y,\tilde z)}sds\pm\Psi(\delta|r,n)\notag\\
&=\int_0^{d(\tilde y,\tilde z)}\left(u(\sigma(s))-u(\sigma(0))\right)ds\pm\Psi(\delta|r,n)\notag\\
&=\int_U \left(u(\tau_s(l(s)))-u(\tau_s(0))\right)ds\pm\Psi(\delta|r,n).
\end{align}

By the fundamental theorem of calculus we have
\begin{align}\label{Jlm3.7eq14}
\int_U \left(u(\tau_s(l(s)))-u(\tau_s(0))\right)ds\pm\Psi(\delta|r,n)=\int_U\int_0^{l(s)} \langle \nabla u(\tau_s(t)),\tau_s'(t))\rangle dtds\pm\Psi(\delta|r,n).
\end{align}

By the fundamental theorem of calculus again we have
\begin{align}\label{Jlm3.7eq15}
\left|\int_0^{l(s)}\left(\langle \nabla u(\tau_s(t)),\tau_s'(t))\rangle -\langle \nabla u(\tau_s(l(s))),\tau_s'(l(s))\rangle\right) dt\right|
&=\left|\int_0^{l(s)}\int_t^{l(s)}\nabla^2 u\left(\tau_s'(s'),\tau_s'(s')\right)ds'dt\right|\notag\\
&\le \int_0^{l(s)}\int_0^{l(s)}|\nabla^2 u\left(\tau_s'(s'),\tau_s'(s')\right)|ds'dt\notag\\
&= l(s)\int_0^{l(s)}|\nabla^2 u\left(\tau_s'(s'),\tau_s'(s')\right)|ds'\notag\\
&\le l(s)\int_0^{l(s)}|\nabla^2 u(\tau_s(t)|dt.
\end{align}

Recall that by \eqref{hessian estm} we have $\int_U l(s)\int_0^{l(s)}|\nabla^2 u(\tau_s(t)|dtds\le \Psi(\delta|r,n)$, thus combining \eqref{Jlm3.7eq13}-\eqref{Jlm3.7eq15} we have
\begin{align}\label{Jlm3.7eq16}
\frac{1}{2}d^2(y,z)
&=\int_U\int_0^{l(s)} \langle \nabla u(\tau_s(l(s))),\tau_s'(l(s))\rangle dtds+l(s)\int_U\int_0^{l(s)}|\nabla^2 u(\tau_s(t)|dtds+\Psi(\delta|r,n)\notag\\
&= \int_U l(s)\langle \nabla u(\tau_s(l(s))),\tau_s'(l(s))\rangle ds+\Psi(\delta|r,n)\notag\\
&= \int_U l(s)\langle \nabla u(\sigma(s)),\tau_s'(l(s))\rangle ds+\Psi(\delta|r,n).
\end{align}

On the other hand, we estimate $d^2(x,y)-d^2(x,z)$ in terms of $u$:
\begin{align}\label{Jlm3.7eq17}
\frac{1}{2}d^2(x,y)-\frac{1}{2}d^2(x,z)
&=\frac{1}{2}d^2(\tilde x,\tilde y)-\frac{1}{2}d^2(\tilde x,\tilde z)\pm\Psi(\delta|r,n)\notag\\
&=\frac{1}{2}l^2(d(\tilde y,\tilde z))-\frac{1}{2}l^2(0)\pm\Psi(\delta|r,n)\notag\\
&=\int_U l'(s)l(s)ds\pm\Psi(\delta|r,n).
\end{align}

By the first variation formula of arc lenth, \eqref{Jlm3.7eq17} gives
\begin{align}\label{Jlm3.7eq18}
\frac{1}{2}d^2(x,y)^2-\frac{1}{2}d^2(x,z)^2
&=\int_U l(s)\langle \nabla u(\sigma(s)),\tau_s'(l(s))\rangle ds\pm\Psi(\delta|r,n).
\end{align}

Combining  \eqref{Jlm3.7eq16} and \eqref{Jlm3.7eq18}, we have
\begin{align}
d^2(y,z)=d^2(x,y)-d^2(x,z)\pm\Psi(\delta|r,n),
\end{align}
which completes the proof of the lemma.
\end{proof}

Following the aproach of Cheeger-Colding, we need the following lemmas:
\begin{lemma}[Lemma 1.14 in \cite{Ch97}]\label{TBineq}
Let $(M,g)$ be a complete Riemannian manifold and $u\in C^\infty(M)$. For any $x\in B_r(x), r,l>0$ and $0\le t\le l$ there holds
\begin{align}
\frac{1}{{\text{Liouv}}_g(SB_r(x))}\int_{SB_r(x)}\left|(u\circ\gamma_v)'(t)-\frac{u(\gamma_v(l))-u(\gamma_v(0))}{l}\right|d{\text{Liouv}}_g\le \frac{2l}{\Vol_g(B_r(x))}\int_{B_{r+l}(x)}|\nabla^2 u|d\Vol_g,
\end{align}
where $SB_r(x)$ is the unit tangent bundle of $B_r(x)$, ${\text{Liouv}}_g$ is the Liouville measure on the tangent bundle (which is defined as the product measure of the volume measure induced by $g$ with the spherical measure), $v\in SB_r(x))$ and $\gamma_v$ is the geodesic with $\gamma'(0)=v$.
\end{lemma}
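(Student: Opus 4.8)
The plan is to reduce the inequality to the fundamental theorem of calculus along geodesics, combined with the invariance of the Liouville measure under the geodesic flow; no deep input is needed. First I would fix $v\in SB_r(x)$ and set $\varphi(s):=u(\gamma_v(s))$. Since $\gamma_v$ is a unit-speed geodesic, $\varphi'(s)=\langle\nabla u(\gamma_v(s)),\gamma_v'(s)\rangle$ and $\varphi''(s)=\nabla^2u(\gamma_v'(s),\gamma_v'(s))$, so $|\varphi''(s)|\le|\nabla^2u|(\gamma_v(s))$. Because $\dfrac{u(\gamma_v(l))-u(\gamma_v(0))}{l}=\dfrac1l\int_0^l\varphi'(s)\,ds$, for every $t\in[0,l]$ we get
\begin{align*}
\left|\varphi'(t)-\frac{u(\gamma_v(l))-u(\gamma_v(0))}{l}\right|
&=\left|\frac1l\int_0^l\bigl(\varphi'(t)-\varphi'(s)\bigr)\,ds\right|
\le\frac1l\int_0^l\left|\int_s^t\varphi''(\tau)\,d\tau\right|ds\\
&\le\frac1l\int_0^l\int_0^l|\varphi''(\tau)|\,d\tau\,ds
\le\int_0^l|\nabla^2u|(\gamma_v(\tau))\,d\tau .
\end{align*}
This is the pointwise (in $v$) input; it already produces the constant $l$, and the stated $2l$ is harmless slack.

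Next I would integrate this bound over $SB_r(x)$ against $\text{Liouv}_g$ and apply Fubini to the resulting double integral in $(v,\tau)\in SB_r(x)\times[0,l]$; it then suffices to bound, for each fixed $\tau\in[0,l]$, the quantity $\int_{SB_r(x)}|\nabla^2u|(\gamma_v(\tau))\,d\text{Liouv}_g(v)$. Let $SM$ be the full unit tangent bundle, $\pi\colon SM\to M$ the projection, and $\phi_\tau\colon SM\to SM$ the time-$\tau$ geodesic flow, so that $\pi(\phi_\tau(v))=\gamma_v(\tau)$. Since $\phi_\tau$ preserves $\text{Liouv}_g$ (Liouville's theorem),
\begin{align*}
\int_{SB_r(x)}|\nabla^2u|\bigl(\gamma_v(\tau)\bigr)\,d\text{Liouv}_g(v)
&=\int_{\phi_\tau(SB_r(x))}\bigl(|\nabla^2u|\circ\pi\bigr)\,d\text{Liouv}_g\\
&\le\int_{SB_{r+l}(x)}\bigl(|\nabla^2u|\circ\pi\bigr)\,d\text{Liouv}_g
=\mathrm{vol}(\mathbb{S}^{n-1})\int_{B_{r+l}(x)}|\nabla^2u|\,d\Vol_g ,
\end{align*}
where the inequality uses $|\nabla^2u|\ge0$ and $\phi_\tau(SB_r(x))\subset SB_{r+\tau}(x)\subset SB_{r+l}(x)$ — valid because a unit-speed geodesic issuing from $B_r(x)$ stays within distance $\tau$ of its starting point up to time $\tau$ — and the last equality uses that $\text{Liouv}_g$ is the product of $d\Vol_g$ with the round measure on the fibre $\mathbb{S}^{n-1}$.

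Finally, integrating the $\tau$-bound over $[0,l]$, combining with the first display, and dividing by $\text{Liouv}_g(SB_r(x))=\mathrm{vol}(\mathbb{S}^{n-1})\,\Vol_g(B_r(x))$, the factors $\mathrm{vol}(\mathbb{S}^{n-1})$ cancel and the asserted estimate follows (in fact with $l$, hence a fortiori with $2l$). I do not anticipate a genuine obstacle; the one step needing care is that geodesics here need not be minimizing and $\exp$ need not be injective, so one should not change variables through $\exp$ over a single ball but instead work on $SM$ with the globally defined, measure-preserving geodesic flow, which makes cut-locus issues irrelevant.
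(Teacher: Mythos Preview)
The paper does not supply its own proof of this lemma; it is quoted verbatim as Lemma~1.14 of \cite{Ch97} and used as a black box. Your argument is correct and is essentially the standard one: the pointwise bound via the fundamental theorem of calculus along each geodesic, followed by the Liouville--invariance step to convert the integral over $SB_r(x)$ into one over $SB_{r+l}(x)$, is exactly how Cheeger proves it. Your observation that the argument actually yields the constant $l$ rather than $2l$ is also right; the $2l$ in the statement is just slack. The only cosmetic point is that in the line $\frac1l\int_0^l\int_0^l|\varphi''(\tau)|\,d\tau\,ds$ you have replaced $|\int_s^t\varphi''|$ by $\int_0^l|\varphi''|$, which is of course valid since $[s,t]\subset[0,l]$; you might make that substitution explicit.
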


\begin{lemma}\label{Jianglm3.8}
Let $(M^n,g,f)$ be a complete gradient shrinking Ricci soliton and $p$ be a minimum point of $f$. For any $u\in C^\infty(B_{100r}(p))$ satisfying (u1)-(u3), let $x,y\in B_{2r}(p)$ with $u(y)=a$, we have
\begin{align}
\left|d(x,u^{-1}(\{a\})-|u(x)-a|\right|<\Psi(\delta|r,n).
\end{align}
\end{lemma}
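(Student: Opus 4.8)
The plan is to prove separately the two inequalities $d(x,u^{-1}(\{a\}))\ge |u(x)-a|-\Psi(\delta|r,n)$ and $d(x,u^{-1}(\{a\}))\le |u(x)-a|+\Psi(\delta|r,n)$. The lower bound is the easy half. Since $u(y)=a$, the level set $u^{-1}(\{a\})$ is nonempty and $d(x,u^{-1}(\{a\}))\le d(x,y)\le 4r$; letting $z$ realize this distance, the minimal geodesic from $x$ to $z$ lies in $B_{6r}(p)\subset B_{100r}(p)$, so by (u3) we get $|u(x)-a|=|u(x)-u(z)|\le\int_0^{d(x,z)}|\nabla u|\le (1+\Psi(\delta|r,n))\,d(x,z)$, and since $d(x,z)\le 4r$ this gives $d(x,z)\ge |u(x)-a|-\Psi(\delta|r,n)$.

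The upper bound is the main point. Replacing $u$ by $-u$ (which again satisfies (u1)--(u3) and has $(-u)^{-1}(\{-a\})=u^{-1}(\{a\})$), I may assume $u(x)\ge a$; if $u(x)=a$ there is nothing to prove, so set $L:=u(x)-a>0$. The idea is to descend from a point near $x$ along a geodesic in an almost steepest-descent direction of $u$. First I would perturb $x$ to a point where $|\nabla u|$ is close to $1$: by (u2) with $i=j$ and Chebyshev's inequality, $\mu(\{|\nabla u|^2\le 1-\sqrt{\Psi(\delta|r,n)}\}\cap B_{2r}(p))\le C(n,r)\sqrt{\Psi(\delta|r,n)}\,\mu(B_{2r}(p))$, while the soliton volume comparison (Lemma \ref{volcomp}, applied on balls centered at $x$, with $|f|$ bounded and curvature lower bound $\Lambda\to 0$) yields $\mu(B_\rho(x))\ge c(n,r)\,\rho^{\,C(n,r)}\mu(B_{2r}(p))$ for $0<\rho\le r$. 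Hence there is $\rho=\rho(\delta|r,n)\to 0$, a fixed power of $\Psi(\delta|r,n)$, and a point $x'\in B_\rho(x)$ with $|\nabla u(x')|^2\ge 1-\sqrt{\Psi(\delta|r,n)}$; after a further harmless perturbation of $x'$ by at most $\rho$ (absorbed into $\rho$) we may also assume $x'$ is a good basepoint for Lemma \ref{TBineq}.

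Next I would apply Cheeger's averaged estimate (Lemma \ref{TBineq}) along the geodesics emanating from $x'$, with $l:=L+\Psi_*$ for a suitable $\Psi_*=\Psi_*(\delta|r,n)$, $l\le 5r$. Since $0\le f\le C(n,r)$ on $B_{10r}(p)$ by \eqref{fcontrol}, the measures $d\mu$ and $d\Vol_g$ are uniformly comparable there, so (u1), the volume comparison and H\"older bound the right-hand side of Lemma \ref{TBineq} by $\Psi_1:=C(n,r)\sqrt{\Psi(\delta|r,n)}$. Consequently, for a set of unit directions $v$ of measure at least $(1-\sqrt{\Psi_1})$ times that of the fiber, $(u\circ\gamma_v)'$ stays within $\sqrt{\Psi_1}$ of the constant $\langle\nabla u(x'),v\rangle$ on $[0,l]$ (its $s$-derivative being $\nabla^2u(\gamma_v',\gamma_v')$). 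The set of $v$ with $\langle\nabla u(x'),v\rangle\le -(1-\Psi_2)$ has measure comparable to $\Psi_2^{(n-1)/2}$ times that of the fiber, so for $\Psi_2=\Psi_2(\delta|r,n)$ a suitably large power of $\Psi(\delta|r,n)$ (so that $\Psi_2^{(n-1)/2}$ dominates $\sqrt{\Psi_1}$) these two sets intersect; fix $v_*$ there. Then $u(\gamma_{v_*}(l))-u(x')=\int_0^l(u\circ\gamma_{v_*})'\le -(1-\Psi_2)\,l+\sqrt{\Psi_1}\,l$, and since $|u(x')-u(x)|\le(1+\Psi(\delta|r,n))\rho$, choosing $\Psi_*$ a large enough power of $\Psi(\delta|r,n)$ forces $u(\gamma_{v_*}(l))\le a$. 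If $u(x')>a$, the intermediate value theorem gives $t^*\le l$ with $u(\gamma_{v_*}(t^*))=a$, whence $d(x,u^{-1}(\{a\}))\le\rho+t^*\le L+\Psi_*+\rho=|u(x)-a|+\Psi(\delta|r,n)$; the remaining case $u(x')\le a$ (which forces $L\le(1+\Psi)\rho$) is handled by applying the intermediate value theorem to $u$ along the minimal geodesic from $x'$ to $x$. Combining the two inequalities, with a suitable choice of the final $\Psi(\delta|r,n)$, proves the lemma.

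I expect the upper bound to be the only real obstacle, and within it the hardest point to be the production of the almost steepest-descent geodesic: one cannot control $|\nabla u|$ from below at the single point $x$ (a subharmonic-type quantity like $|\nabla u|^2$ can dip down at isolated points), so a perturbation $x\to x'$ is forced, and it can be kept quantitatively small only through the polynomial-but-quantitative soliton volume comparison; the whole argument then hinges on balancing the several powers of $\Psi(\delta|r,n)$ produced by Chebyshev's inequality, by the volume comparison, by the measure of the cap of good steep directions, and by Cheeger's inequality against one another.
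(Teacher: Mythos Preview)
Your proposal is correct and follows essentially the same route as the paper. Both proofs obtain the lower bound on $d(x,u^{-1}(\{a\}))$ immediately from (u3), then for the upper bound perturb $x$ to a nearby point where $|\nabla u|\approx 1$, invoke Cheeger's integral estimate (Lemma~\ref{TBineq}) together with (u1)--(u2) to find a geodesic direction that is almost the gradient direction, and finally use the intermediate value theorem to hit the level set.

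The only noteworthy technical difference is in how the control of $u$ along the chosen geodesic is obtained. The paper takes the fixed large length $l=5r$, applies Lemma~\ref{TBineq} only at $t=0$ to see that the secant slope $\frac{u(\gamma_v(5r))-u(\gamma_v(0))}{5r}$ is close to $1$, and then exploits (u3) via an ``overshoot'' argument (since $u$ can grow by at most $(1+\Psi)(5r-t)$ on $[t,5r]$, it must already have grown by at least $t-\Psi$ on $[0,t]$) to pin down $u(\gamma_v(t))\approx t$ for \emph{all} $t$. You instead take $l\approx |u(x)-a|$ and use the Hessian bound along the whole interval to keep $(u\circ\gamma_v)'$ nearly constant. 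Your formulation ``$(u\circ\gamma_v)'$ stays within $\sqrt{\Psi_1}$ of $\langle\nabla u(x'),v\rangle$ on $[0,l]$'' is a slight overstatement: what Fubini--Chebyshev actually yields is the $L^1$ bound $\int_0^l|(u\circ\gamma_v)'(t)-\langle\nabla u(x'),v\rangle|\,dt\le\Psi$, but this is all you use since only the integrated quantity $u(\gamma_v(l))-u(x')$ enters. The paper's overshoot trick is a bit slicker (it avoids the separate case $u(x')\le a$), while your selection of the good basepoint and direction is spelled out more carefully than the paper's one-line ``we can choose $\tilde x$ and $v$''.
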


\begin{proof}
Without loss of generality we assume $u(x)=0<a$.  Let $\bar x$ be a point in $u^{-1}(\{a\})$ such that $d(x,\bar x)=d(x,u^{-1}(\{a\})$. By definition of $\bar x$ we have $d(x,\bar x)\le d(x,y)$. Since $x,y\in B_{2r}(p)$, we have
\begin{align}\label{lm3.8eq0}
d(x,u^{-1}(\{a\}))=d(x,\bar x)\le d(x,y)\le d(x,p)+d(y,p)< 4r.
\end{align}

Then we have
\begin{align}
d(\bar x,p)\le d(x,\bar x)+d(x,p)< 6r.
\end{align}

Let $\gamma_{x\bar x}$ be a minimal geodesic  from $x$ to $\bar x$, then we have
\begin{align}
d(z,p)\le d(x,z)+d(x,p)\le d(x,\bar x)+d(x,p) <6r, \forall z\in \gamma_{x\bar x}.
\end{align}

Thus by mean value theorem we have
\begin{align}\label{lm3.8eq1}
|u(x)-a|=|u(x)-u(\bar x)|\le \sup_{B_{6r}(p)}|\nabla u|d(x,\bar x).
\end{align}

By (u3), $\nabla u\le 1+\Psi(\delta|r,n)$, thus \eqref{lm3.8eq1} becomes
\begin{align}\label{lm3.8eq2}
|u(x)-a|=|u(x)-u(\bar x)|
&\le (1+\Psi(\delta|r,n))d(x,\bar x)\\ \notag
&= d(x,u^{-1}(\{a\}))+\Psi(\delta|r,n)d(x,u^{-1}(\{a\})).
\end{align}

Combining \eqref{lm3.8eq0} and \eqref{lm3.8eq2}, we have
\begin{align}\label{lm3.8eqc1}
|u(x)-a|-d(x,u^{-1}(\{a\}))\le \Psi(\delta|r,n)4r\le \Psi(\delta|r,n).
\end{align}

On the other hand,  using  Lemma \ref{TBineq} with its $l=5r$, we have for any $t\in [0,5r]$,
\begin{align}\label{lm3.8eq3}
\frac{1}{{\text{Liouv}}_g(SB_{2r}(x))}\int_{SB_{2r}(x)}\left|(u\circ\gamma_v)'(t)-\frac{u(\gamma_v(5r))-u(\gamma_v(0))}{5r}\right|d{\text{Liouv}}_g\le \frac{10r}{\Vol_g(B_{2r}(x))}\int_{B_{2r+5r}(x)}|\nabla^2 u|d\Vol_g.
\end{align}

By Lemma \ref{lm2.4} we have
\begin{align}
\frac{10r}{\Vol_g(B_{2r}(x))}\int_{B_{2r+5r}(x)}|\nabla^2 u|d\Vol_g \le \frac{C(n,r)}{\mu(B_{2r}(x))}\int_{B_{7r}(x)}|\nabla^2 u|d\mu.
\end{align}

Without loss of generality we assume $r>C_0(n)$ for the $C_0(n)$ in Lemma \ref{lm2.5}, then we have
 \begin{align}
\frac{\mu(B_{100r}(x))}{\mu(B_{2r}(x))}\le \frac{\mu(M)}{\mu(B_{2r}(x))}\le 2
\end{align}

By definition we have
 \begin{align}\label{lm3.8eq4}
\frac{C(n,r)}{\mu(B_{2r}(x))}\int_{B_{7r}(x)}|\nabla^2 u|d\mu\le \frac{C(n,r)\mu(B_{100r}(x))}{\mu(B_{2r}(x))}\fint_{B_{100r}(x)}|\nabla^2 u|d\mu.
\end{align}

Combining \eqref{lm3.8eq3}-\eqref{lm3.8eq4}, we have
\begin{align}\label{lm3.8eq5}
\fint_{SB_{2r}(x)}\left|(u\circ\gamma_v)'(t)-\frac{u(\gamma_v(5r))-u(\gamma_v(0))}{5r}\right|d{\text{Liouv}}_g\le C(n,r)\fint_{B_{100r}(x)}|\nabla^2 u|d\mu.
\end{align}

By (u1), \eqref{lm3.8eq5} becomes
\begin{align}\label{lm3.8eq6}
\fint_{SB_{2r}(x)}\left|(u\circ\gamma_v)'(t)-\frac{u(\gamma_v(5r))-u(\gamma_v(0))}{5r}\right|d{\text{Liouv}}_g\le \Psi(\delta|r,n).
\end{align}

By \eqref{lm3.8eq6} and (u2), we can choose $\tilde x\in B_{\Psi(\delta|r,n)}(x)$, and $v\in S_{\tilde x}M$, such that
\begin{align}\label{lm3.8eq7}
\left||\nabla u|(\tilde x)-1\right|\le \Psi(\delta|r,n),
\end{align}
\begin{align}\label{lm3.8eq8}
|v-\nabla u(\tilde x)|\le \Psi(\delta|r,n),
\end{align}
 and
\begin{align}\label{lm3.8eq13}
\left|\langle \nabla u,v\rangle-\frac{f(\gamma_v(5r))-f(\gamma_v(0))}{5r}\right|\le \Psi(\delta|r,n).
\end{align}

Since $v$ is a unit vector, by  Cauchy inequality and \eqref{lm3.8eq7} we have
\begin{align}\label{lm3.8eq9}
\left|\langle \nabla u,v\rangle\right| \le|\nabla u||v| \le 1+\Psi(\delta|r,n).
\end{align}

Since $v$ is a unit vector, by \eqref{lm3.8eq8} we have
\begin{align}\label{lm3.8eq10}
1+|\nabla u(\tilde x)|^2-2\langle \nabla u,v\rangle=|v-\nabla u(\tilde x)|^2\le \Psi(\delta|r,n),
\end{align}

Combining \eqref{lm3.8eq7} and \eqref{lm3.8eq10} we have
 \begin{align}\label{lm3.8eq11}
-2\langle \nabla u,v\rangle\le -1-|\nabla u(\tilde x)|^2+\Psi(\delta|r,n)\le -2+\Psi(\delta|r,n),
\end{align}

Combining \eqref{lm3.8eq9} and \eqref{lm3.8eq11}, we have
\begin{align}\label{lm3.8eq12}
\left|\langle \nabla u,v\rangle-1\right|^2&=\langle \nabla u,v\rangle^2+1-2\langle \nabla u,v\rangle\notag\\
&\le 1+1-2+\Psi(\delta|r,n)\le \Psi(\delta|r,n).
\end{align}

Thus by \eqref{lm3.8eq13} and \eqref{lm3.8eq12}, we have
\begin{align}\label{lm3.8eq14}
\left|f(\gamma_v(5r))-f(\gamma_v(0))-5r\right|\le \Psi(\delta|r,n).
\end{align}

By \eqref{lm3.8eq14}, for any $t\in[0,5r]$, we have
\begin{align}\label{lm3.8eq15}
u(\gamma_v(t))-u(\gamma_v(0))-t
&=\left(u\left(\gamma_v(5r)\right)-u\left(\gamma_v(0)\right)\right)-\left(u\left(\gamma_v(5r)\right)-u\left(\gamma_v(t)\right)\right)-t\notag\\
&\ge 5r-\Psi(\delta|r,n)-\left|u(\gamma_v(5r)-u(\gamma_v(t)\right|-t.
\end{align}

Similarly as \eqref{lm3.8eq1}, the minimal geodesics from $\gamma_v(t)$ to $\gamma_v(5r)$ are also in $B_{100r}(p)$, and by mean value theorem and (u3) we have
\begin{align}\label{lm3.8eq16}
\left|u(\gamma_v(5r)-u(\gamma_v(t)\right|&\le \sup_{B_{100r}(p)}|\nabla u|(5r-t)\notag\\
&\le (1+\Psi(\delta|r,n))(5r-t)\notag\\
&\le 5r-t+\Psi(\delta|r,n).
\end{align}

Combining \eqref{lm3.8eq15} and \eqref{lm3.8eq16}, we have
\begin{align}\label{lm3.8eq17}
u(\gamma_v(t))-u(\gamma_v(0))-t
&\ge 5r-t-\Psi(\delta|r,n)-\left|u(\gamma_v(5r)-u(\gamma_v(t)\right|\notag\\
&\ge -\Psi(\delta|r,n).
\end{align}

Recall $\gamma_v(0)=\tilde x\in B_{\Psi(\delta|r,n)}(x)$, similarly as \eqref{lm3.8eq1}, by mean value theorem we have
\begin{align}\label{lm3.8eq18}
|u(\tilde x)|=|u(x)-u(\tilde x)|\le \sup_{B_{100r}(p)}|\nabla u|d(x,\tilde x)\le \Psi(\delta|r,n).
\end{align}

By \eqref{lm3.8eq17} and \eqref{lm3.8eq18}, we have
\begin{align}\label{lm3.8eq19}
u(\gamma_v(t))=u(\gamma_v(t))-u(x)\ge u(\gamma_v(t))-u(\tilde x)-\Psi(\delta|r,n)\ge t-\Psi(\delta|r,n).
\end{align}

Similarly as \eqref{lm3.8eq1}, by mean value theorem we have
\begin{align}\label{lm3.8eq20'}
u(\gamma_v(t)=u(\gamma_v(t))-u(x)
&\le u(\gamma_v(t))-u(\gamma_v(0))+\Psi(\delta|r,n)\notag\\
&\le (1+\Psi(\delta|r,n))d(\gamma_v(t),x)+\Psi(\delta|r,n)\notag \\
&\le t+\Psi(\delta|r,n).
\end{align}

By \eqref{lm3.8eqc1} and \eqref{lm3.8eq0}, we have
\begin{align}\label{lm3.8eq20}
a=|u(x)-a|\le d(x,u^{-1}(\{a\}))+\Psi(\delta|r,n)\le 4r+\Psi(\delta|r,n)\le\frac{9}{2}r<5r,
\end{align}
for $\delta$ sufficiently small.

Thus by \eqref{lm3.8eq19}-\eqref{lm3.8eq20} and the intermediate value theorem we have that there is a $\hat t\in [a-\Psi(\delta|r,n),a+\Psi(\delta|r,n)]$ such that $u(\gamma_v(\hat t))=a$.

Let $t=\hat t$ in \eqref{lm3.8eq19}, we have
\begin{align}\label{lm3.8eq21}
a=u(\gamma_v(\hat t))\ge \hat t-\Psi(\delta|r,n)\ge d(\gamma_v(\hat t),\gamma_v(0))-\Psi(\delta|r,n).
\end{align}

Recall $\gamma_v(0)=\tilde x\in B_{\Psi(\delta|r,n)}(x)$, since $\gamma_
v(\hat t)\in u^{-1}(\{a\})$, we have
\begin{align}\label{lm3.8eq22}
d(\gamma_v(\hat t),\gamma_v(0))\ge d(\gamma_v(\hat t),x)-\Psi(\delta|r,n)\ge d(x,u^{-1}(\{a\}))-\Psi(\delta|r,n).
\end{align}

Combining \eqref{lm3.8eq21} and \eqref{lm3.8eq22}, we have
\begin{align}\label{lm3.8eq23}
|u(x)-a|-d(x,u^{-1}(\{a\}))=a-d(x,u^{-1}(\{a\}))\ge-\Psi(\delta|r,n).
\end{align}

By \eqref{lm3.8eqc1} and \eqref{lm3.8eq23}, we have
\begin{align}\label{lm3.8eq23'}
\left||u(x)-a|-d(x,u^{-1}(\{a\}))\right|\le\Psi(\delta|r,n),
\end{align}
which completes the proof of the lemma.
\end{proof}

\begin{lemma}\label{Jianglm3.9}
Let $(M^n,g,f)$ be a complete gradient shrinking Ricci soliton and $p$ be a minimum point of $f$. For any $u\in C^\infty(B_{100r}(p))$ satisfying (u1)-(u3),  $x\in B_{2r}(p)$ and $|s|\le 2$, there exists $y\in B_{5r}(p)$ such that $u(y)=u(x)+s$, and
\begin{align}
\left||u(y)-u(x)|-d(x,y)\right|\le \Psi(\delta|r,n).
\end{align}
\end{lemma}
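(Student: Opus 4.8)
The plan is to derive Lemma \ref{Jianglm3.9} from the geodesic already produced in the proof of Lemma \ref{Jianglm3.8}. First I would normalize the sign of $s$: the conditions (u1)--(u3) only involve $|\nabla^2 u|$, $\langle\nabla u,\nabla u\rangle$ and $|\nabla u|$, all unchanged under $u\mapsto-u$, so applying the case $s\ge0$ to $-u$ with parameter $|s|$ disposes of $s<0$; and $s=0$ is immediate with $y=x$. Hence I may assume $0<s\le2$ and (as in Lemma \ref{Jianglm3.8}) that $r\ge C_0(n)$. Normalize also $u(x)=0$.

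Next I would recall from the proof of Lemma \ref{Jianglm3.8} the point $\tilde x\in B_{\Psi(\delta|r,n)}(x)$ and the unit vector $v\in S_{\tilde x}M$ whose associated unit-speed geodesic $\gamma_v$ (with $\gamma_v(0)=\tilde x$) stays in $B_{100r}(p)$ on $[0,5r]$ and satisfies, by \eqref{lm3.8eq18}, \eqref{lm3.8eq19} and \eqref{lm3.8eq20'},
\begin{align}\label{plan-geod}
|u(\tilde x)|\le\Psi(\delta|r,n),\qquad |u(\gamma_v(t))-t|\le\Psi(\delta|r,n)\ \ \text{for all }t\in[0,5r].
\end{align}
I would also use that (u3) gives $|u(a)-u(b)|\le(1+\Psi(\delta|r,n))d(a,b)$ whenever $a,b$ are joined by a minimal geodesic inside $B_{100r}(p)$.

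Then I would split into two cases. \emph{If $u(\tilde x)\le s$:} since $0<s\le2$ and $r\ge C_0(n)$, one has $s+2\Psi(\delta|r,n)\le5r$ and, by \eqref{plan-geod}, $u(\gamma_v(s+2\Psi(\delta|r,n)))\ge s+\Psi(\delta|r,n)>s$; so the intermediate value theorem applied to $t\mapsto u(\gamma_v(t))$ gives $\hat t\in[0,s+2\Psi(\delta|r,n)]$ with $u(\gamma_v(\hat t))=s$, and we set $y=\gamma_v(\hat t)\in B_{5r}(p)$. Then \eqref{plan-geod} forces $|\hat t-s|\le\Psi(\delta|r,n)$, so $d(x,y)\le d(x,\tilde x)+\hat t\le s+\Psi(\delta|r,n)$; and from $s-\Psi(\delta|r,n)\le u(y)-u(\tilde x)\le(1+\Psi(\delta|r,n))d(\tilde x,y)$ together with the triangle inequality one gets $d(x,y)\ge s-\Psi(\delta|r,n)$. \emph{If $u(\tilde x)>s$:} then $0<s<u(\tilde x)\le\Psi(\delta|r,n)$ forces $s<\Psi(\delta|r,n)$, and applying the intermediate value theorem to $u$ along the minimal geodesic $\gamma_{\tilde x x}$ (its $u$-values run from $u(\tilde x)>s$ down to $u(x)=0<s$) yields $y\in\gamma_{\tilde x x}\subset B_{5r}(p)$ with $u(y)=s$, for which $d(x,y)\le d(\tilde x,x)\le\Psi(\delta|r,n)$ and $|u(y)-u(x)|=s<\Psi(\delta|r,n)$. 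In either case $u(y)=u(x)+s$ and $\big||u(y)-u(x)|-d(x,y)\big|=|s-d(x,y)|\le\Psi(\delta|r,n)$.

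The substance — a geodesic along which $u$ grows at essentially unit rate — is inherited from Lemma \ref{Jianglm3.8}, so the main obstacle is really only the degenerate window $0<s\lesssim\Psi(\delta|r,n)$, where $\gamma_v(0)$ may already have $u$-value exceeding $s$ and direct bisection along $\gamma_v$ fails; this is precisely what forces the second case and the short detour back along $\gamma_{\tilde x x}$. Everything else is triangle-inequality bookkeeping with the error function $\Psi(\delta|r,n)$, and the hypothesis $r\ge C_0(n)$ enters only to keep the auxiliary balls inside $B_{100r}(p)$ and to ensure $s+2\Psi(\delta|r,n)\le5r$.
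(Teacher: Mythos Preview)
Your proof is correct and follows essentially the same route as the paper's: recall the geodesic $\gamma_v$ from the proof of Lemma~\ref{Jianglm3.8} along which $u$ grows at nearly unit rate, then apply the intermediate value theorem to hit the level $s$. Your case split for the degenerate window $0<s<u(\tilde x)\le\Psi(\delta|r,n)$ is in fact more careful than the paper's sketch, which simply asserts $t_0\in[s-\Psi,s+\Psi]$ without addressing that this interval may extend below zero.
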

\begin{proof}
The proof is similar as the proof of Lemma \ref{Jianglm3.8}, thus we only sketch it here. Without loss of generality we assume $u(x)=0$ and $s>0$.

It has been proved in the proof of Lemma \ref{Jianglm3.8} (see \eqref{lm3.8eq8}, \eqref{lm3.8eq7}, \eqref{lm3.8eq19} and \eqref{lm3.8eq20'}) that we can choose $\tilde x\in B_{\Psi(\delta|r,n)}(x)$, and $v\in S_{\tilde x}M$, such that
\begin{align}\label{lm3.8eq31}
\left||\nabla u|(\tilde x)-1\right|\le \Psi(\delta|r,n),
\end{align}
\begin{align}\label{lm3.8eq32}
|v-\nabla u(\tilde x)|\le \Psi(\delta|r,n),
\end{align}
 and
 \begin{align}\label{lm3.8eq33}
t-\Psi(\delta|r,n)\le u(\gamma_v(t))\le (1+\Psi(\delta|r,n))d(\gamma_v(t),x)+\Psi(\delta|r,n)\le t+\Psi(\delta|r,n), \forall t\in[0,5r].
\end{align}

Since $s\in[0,2r]$, by the intermediate value theorem, there exists $t_0\in[s-\Psi(\delta|r,n),s+\Psi(\delta|r,n)]$ such that
\begin{align}\label{lm3.8eq34}
u(\gamma_v(t_0))=s.
\end{align}

Let $y=\gamma_v(t_0)$, and let $t=t_0$ in \eqref{lm3.8eq33}, we have
 \begin{align}\label{lm3.8eq35}
-\Psi(\delta|r,n)\le u(y)-t_0=u(y)-u(x)-t_0\le \Psi(\delta|r,n), \forall t\in[0,5r].
\end{align}

We also have by \eqref{lm3.8eq33} that
 \begin{align}\label{lm3.8eq36}
-\Psi(\delta|r,n)\le  d(\gamma_v(t_0),x)-t_0\le \Psi(\delta|r,n).
\end{align}

Thus by \eqref{lm3.8eq35} and \eqref{lm3.8eq36}, we have
\begin{align}\label{lm3.8eq37}
\left||u(y)-u(x)|-  d(y,x)\right|=\left||u(y)-u(x)|-  d(\gamma_v(t_0),x)\right|\le \Psi(\delta|r,n),
\end{align}
which completes the proof of the lemma.
\end{proof}

Using lemmas above, now we can prove Theorem \ref{thmap}. Firstly we prove a special case $k=1$.
\begin{proof}[Proof of Theorem \ref{thmap} for the special case $k=1$]
Let $(M^n,g,f)$ be a complete gradient shrinking Ricci soliton and $p$ be a minimum point of $f$. Suppose $\lambda_k(\Delta_f) \le \frac{1}{2}+\delta$ for some nonnegative integer $k$. Then by Lemma \ref{lm2.1}, for any $r\ge C_0(n)$, there exists $u\in C^\infty(B_{100r}(p))$ satisfying (u1)-(u3).

Note that we only require $u$ to satisfy (u1)-(u3), thus without loss of generality we assume $u(p)=0$. Let $X=u^{-1}(\{0\})$ be the metric subspace of $M$. For any $x\in B_{2r}(p)$, let $\bar x\in X$ be a point such that $d(x,\bar x)=d(x,X)$. Define a map
\begin{align}
F:B_{2r}(p)&\to X\times \mathbb{R},\notag\\
x&\mapsto (\bar x,u(x)).
\end{align}

We will prove that $F$ is a $\Psi(\delta|r,n)$-Gromov Hausdorff map from $B_{2r}(p)$ to $B_{2r}(\bar x,u(p))$.

For any $x,y\in B_{2r}(p)$, without loss of generality we assume $u(x)\ge u(y)\ge0$. let $\bar x,\bar y\in X$ be a point such that $d(x,\bar x)=d(x,X)$ and $d(y,\bar y)=d(y,X)$ respectively. By triangluar inequality we have
\begin{align}
d(\bar x,p)\le d(\bar x,x)+d(x,p)\le 2d(x,p)<4r.
\end{align}
Thus we have $\bar x\in B_{4r}(p)$. Similarly we also have $\bar y\in B_{4r}(p)$.

 Let $\gamma$ denote a minimal geodesic from $\bar y$ to $y$ and $y_0:=\gamma \cap u^{-1}(\{u(x)\})$. Since $\bar x,\bar y\in X=u^{-1}(\{0\})$, we have $u(\bar x)=u(\bar y)=0$, and by Lemma \ref{Jianglm3.8} we have
\begin{align}\label{Jpf3.6eq1}
|d(x,\bar x)-u(x)|=|d(x,u^{-1}(\{0\}))-|u(x)-0||\le \Psi(\delta|r,n),
\end{align}
and,
\begin{align}\label{Jpf3.6eq2}
|d(y,\bar y)-u(y)|=|d(y,u^{-1}(\{0\}))-|u(y)-0||\le \Psi(\delta|r,n).
\end{align}

Given \eqref{Jpf3.6eq2}, similar as the proof of \eqref{Jlm3.7eq12}, and recall $u(y_0)=u(x)$, $u(\bar y)=0$, we have
\begin{align}\label{Jpf3.6eq3}
|d(y,y_0)-(u(y)-u(x))|=|d(y,y_0)-(u(y)-u(y_0))|\le \Psi(\delta|r,n),
\end{align}
and
\begin{align}\label{Jpf3.6eq3'}
|d(y_0,\bar y)-u(x)|=|d(y_0,\bar y)-(u(y_0)-u(\bar y))|\le \Psi(\delta|r,n).
\end{align}

Applying Lemma \ref{Pyth} to triangles $\Delta_{x\bar y\bar x},\Delta_{x\bar yy_0}$ and $\Delta_{xyy_0}$, we have
\begin{align}\label{Jpf3.6eq4}
|d^2(\bar x,\bar y)+d^2(x,\bar x)-d^2(x,\bar y)|\le \Psi(\delta|r,n),
\end{align}
\begin{align}\label{Jpf3.6eq5}
|d^2(y_0,\bar y)+d^2(x,y_0)-d^2(x,\bar y)|\le \Psi(\delta|r,n),
\end{align}
\begin{align}\label{Jpf3.6eq6}
|d^2(y,y_0)+d^2(x,y_0)-d^2(x,y)|\le \Psi(\delta|r,n).
\end{align}

To prove that $F$ is $\Psi(\delta|r,n)$-isometric, by triangular inequality, we have
\begin{align}\label{Jpf3.6eq7}
&|d^2(x,y)-d^2(F(x),F(y)|\notag\\
=&|d^2(x,y)-d^2(\bar x,\bar y)-(u(y)-u(x))^2|\notag\\
\le &|-d^2(\bar x,\bar y)-d^2(x,\bar x)+d^2(x,\bar y)|+|-d^2(y,y_0)-d^2(x,y_0)+d^2(x,y)|\notag\\
&\ +|d^2(x,\bar x)-d^2(x,\bar y)+d^2(y,y_0)+d^2(x,y_0)-(u(y)-u(x))^2|\notag\\
\le &|-d^2(\bar x,\bar y)-d^2(x,\bar x)+d^2(x,\bar y)|+|-d^2(y,y_0)-d^2(x,y_0)+d^2(x,y)|\notag\\
&\ +|d^2(y_0,\bar y)+d^2(x,y_0)-d^2(x,\bar y)|\notag\\
&\ +|d^2(x,\bar x)+d^2(y,y_0)-d^2(y_0,\bar y)-(u(y)-u(x))^2|\notag\\
\le &|-d^2(\bar x,\bar y)-d^2(x,\bar x)+d^2(x,\bar y)|+|-d^2(y,y_0)-d^2(x,y_0)+d^2(x,y)|\notag\\
&\ +|d^2(y_0,\bar y)+d^2(x,y_0)-d^2(x,\bar y)|\notag\\
&\ +|d^2(y,y_0)-(u(y)-u(x))^2|+|d^2(x,\bar x)-d^2(y_0,\bar y)|\notag\\
\le &|d^2(\bar x,\bar y)+d^2(x,\bar x)-d^2(x,\bar y)|+|d^2(y,y_0)+d^2(x,y_0)-d^2(x,y)|\notag\\
&\ +|d^2(y_0,\bar y)+d^2(x,y_0)-d^2(x,\bar y)|\notag\\
&\ +|d^2(y,y_0)-(u(y)-u(x))^2|+|d^2(x,\bar x)-u^2(x)|+|d^2(y_0,\bar y)-u^2(x)|\notag\\
\le &|d^2(\bar x,\bar y)+d^2(x,\bar x)-d^2(x,\bar y)|+|d^2(y,y_0)+d^2(x,y_0)-d^2(x,y)|\notag\\
&\ +|d^2(y_0,\bar y)+d^2(x,y_0)-d^2(x,\bar y)|\notag\\
&\ +C(r)\left(|d(y,y_0)-(u(y)-u(x))|+|d(x,\bar x)-u(x)|+|d(y_0,\bar y)-u(x)|\right).
\end{align}

Using \eqref{Jpf3.6eq1}-\eqref{Jpf3.6eq6} derictly, \eqref{Jpf3.6eq7}gives
\begin{align}\label{Jpf3.6eq10}
|d^2(x,y)-d^2(F(x),F(y)|
\le \Psi(\delta|r,n),
\end{align}
which tells that $F$ is $\Psi(\delta|r,n)$-isometric.

Now we prove that $F$ is $\Psi(\delta|r,n)$-onto from $B_2(p)$ to $B_2(0,\bar p)$. Take arbitrary $(t,\tilde x)\in B_{2-\psi(\delta|r,n)}(0,\bar p)$, where $\psi(\delta|r,n)$ is a function to be determined later and it converges to $0$ as $\delta$ approaches to $0$ and $r,n$ are fixed. Without loss of generality we assume $t\ge 0$.  By Lemma \ref{Jianglm3.9}, there exists $x\in B_5(p)$ such that
\begin{align}\label{Jpf3.6eq8}
u(x)=u(\tilde x)+t=0+t=t,
\end{align}
and
\begin{align}\label{Jpf3.6eq9}
|d(x,\tilde x)-t|=|d(x,\tilde x)-|u(x)-u(\tilde x)||\le \Psi(\delta|r,n).
\end{align}

Let $\bar x\in u^{-1}(\{0\})$ such that $d(x,\bar x)=d(x,u^{-1}(\{0\}))$, then by Lemma \ref{Jianglm3.8} we have
\begin{align}\label{Jpf3.6eq11}
|d(x,\bar x)-t|=|d(x,\bar x)-|u(x)-u(\bar x)||\le \Psi(\delta|r,n).
\end{align}

Combining \eqref{Jpf3.6eq8}-\eqref{Jpf3.6eq11}, and by triangular inequality, we have
\begin{align}\label{Jpf3.6eq12}
d(\bar x,\tilde x)\le|d(x,\bar x)-d(x,\tilde x)|\le|d(x,\tilde x)-t|+|d(x,\bar x)-t|\le \Psi_1(\delta|r,n),
\end{align}
where $\Psi_1(\delta|r,n)$ is a function converging to $0$ as $\delta$ approaches to $0$ and $r,n$ are fixed.

To show that $z\in B_2(p)$,   apply Lemma \ref{Pyth} to the triangle $\Delta_{zp\tilde z}$ and we get
\begin{align}
|d(x,p)^2-d(p,\tilde x)^2-d(\tilde x,x)^2|\le \Psi_2(\delta|r,n).
\end{align}

Then we have
\begin{align}
d(x,p)^2\le|d(x,p)^2-d(p,\tilde x)^2-d(\tilde x,x)^2|+d(p,\tilde x)^2+d(\tilde x,x)^2\le \Psi_2(\delta|r,n)+2-\psi(\delta|r,n)+\Psi_1(\delta|r,n),
\end{align}
where $\Psi_2(\delta|r,n)$ is a function converging to $0$ as $\delta$ approaches to $0$ and $r,n$ are fixed.

Taking $\psi(\delta|r,n)=\Psi_1(\delta|r,n)+\Psi_2(\delta|r,n)$, we have
\begin{align}
d(x,p)^2\le 2.
\end{align}

Thus $x\in B_2(p)$. Moreover, by \eqref{Jpf3.6eq8} and \eqref{Jpf3.6eq12}, $F(x)=(\bar x, u(x))=(\bar x,t)$ and $d(F(x),(t,\tilde x))\le \Psi(\delta|r,n)$, thus $F$ is $\Psi(\delta|r,n)$-onto  from $B_2(p)$ to $B_2(0,\bar p)$.

Therefore $F$ is a $\Psi(\delta|r,n)$-Gromov Hausdorff map from $B_2(p)$ to $B_2(0,\bar p)$, which proves the theorem for the special case $k=1$.
\end{proof}

Now we prove Theorem \ref{thmap} in general case:
\begin{proof}[Proof of Theorem \ref{thmap}]
For any complete gradient shrinking Ricci soliton $(M^n,g,f)$, if $\lambda_k(\Delta_f) \le \frac{1}{2}+\delta$ for some nonnegative integer $k$, then by Lemma \ref{lm2.1}, we have functions $u_i(i=1,...,k)$  on $B_{100r}(p)$ satisfying (u1)-(u4).

Let $\ell\ge0$ be an integer such that there holds
\begin{align*}
d_{\text{GH}}(B_{2r}(p),B_{2r}(\bar p,\mathbf{u}(p)))<\epsilon,
\end{align*}
for some length space $X\times \mathbb{R}^\ell\ni(\bar p,\mathbf{u}(p))$. Since $u_i(i=1,...,k)$ satisfies (u1)-(u3), by the result of special case $k=1$, we have $\ell\ge 1$, and each $u_i$ is $\Psi(\delta|r,n)$-close in Gromov-Hausdorff sense to a linear function $l_i$ in $B_{2r}(\bar p,\mathbf{u}(p))\subset X\times \mathbb{R}^\ell$.

Suppose $\ell <k$, then at least one function of $l_i(i=1,...,k)$ is a  linear combination of other functions. Without loss of generality we assume $l_1=\sum_{\iota=2}^ka_\iota l_\iota$ for some  $a_\iota \in \mathbb{R}(i\neq k_0)$. Denote $v=u_1-\sum_{\iota=2}^ka_\iota u_\iota$, then we have
\begin{align}
\sup_{B_{2r}(p)}|v|\le \Psi(\delta|r,n).
\end{align}

By (u4), assuming $\delta<\frac{1}{4}$ without loss of generality, we have
\begin{align}
\Delta_f v\ge -(\frac{1}{2}+\delta)v\ge -v.
\end{align}

Then by gradient estimate  (Lemma \ref{Gradestm}), we have
\begin{align}\label{nabla v1}
\sup_{B_r(p)}|\nabla v|
\le \fint_{B_{\frac{3}{2}r}(p)}|\nabla v|^2d\mu
\le \fint_{B_{2r}(p)}v^2d\mu
\le \sup_{B_{2r}(p)}|v|
\le\Psi(\delta|r,n).
\end{align}

On the other hand, we will prove that $|\nabla v|^2$ is close to $2$ in the average integral sense, which contradicts to \eqref{nabla v1}. To prove that, we will show $\sum_{\iota=2}^ka_\iota ^2\le1\pm \Psi(\delta|r,n)$ firstly. A direct calculation gives
\begin{align}
|\sum_{\iota=2}^ka_\iota \nabla u_i|^2=|\nabla \sum_{\iota=2}^ka_\iota u_\iota|^2=|\nabla u_1-\nabla v|^2=|\nabla u_1|^2+|\nabla v|^2-2\langle \nabla u_1,\nabla v\rangle.
\end{align}

Thus by \eqref{nabla v1} and Cauchy inequality we have
\begin{align}
|\nabla u_1|^2-\Psi(\delta|r,n)(|\nabla u_1|+1)
\le|\sum_{\iota=2}^ka_\iota \nabla u_i|^2
\le |\nabla u_1|^2+\Psi(\delta|r,n)(|\nabla u_1|+1), {\text{on}} \ B_r(p).
\end{align}

Taking integral on $B_r(p)$, by (u2) and H\"older inequality we have
\begin{align}\label{norm a1}
1-\Psi(\delta|r,n)\le
\sum_{\iota,\varsigma=2}^ka_\iota a_\varsigma\fint_{B_r(p)}\langle\nabla u_\iota,\nabla u_\varsigma\rangle d\mu
=\fint_{B_r(p)}|\sum_{\iota=2}^ka_\iota \nabla u_\iota|^2d\mu
\le1+ \Psi(\delta|r,n).
\end{align}

By (u2), the matrix $(\fint_{B_r(p)}\langle\nabla u_\iota,\nabla u_\varsigma\rangle d\mu)_{(k-1)\times (k-1)}$ is $\Psi(\delta|r,n)$-close to the identity matrix for every entries, thus the least eigenvalue of $(\fint_{B_r(p)}\langle\nabla u_\iota,\nabla u_\varsigma\rangle d\mu)_{(k-1)\times (k-1)}$ is greater than $1-\Psi(\delta|r,n)$, and the greatest eigenvalue is less than $1+\Psi(\delta|r,n)$, thus we have
\begin{align}\label{norm a2}
(1-\Psi(\delta|r,n))\sum_{\iota=2}^ka_\iota ^2\le
\sum_{\iota,\varsigma=2}^ka_\iota a_\varsigma\fint_{B_r(p)}\langle\nabla u_\iota,\nabla u_\varsigma\rangle d\mu
\le(1+ \Psi(\delta|r,n))\sum_{\iota=2}^ka_\iota ^2.
\end{align}

Combining \eqref{norm a1} and \eqref{norm a2}, we have

\begin{align}\label{norm a3}
\frac{1-\Psi(\delta|r,n)}{1+\Psi(\delta|r,n)}\le \sum_{\iota=2}^ka_\iota ^2\le\frac{1+\Psi(\delta|r,n)}{1-\Psi(\delta|r,n)},
\end{align}

which gives
\begin{align}\label{norm a4}
1-\Psi(\delta|r,n)\le \sum_{\iota=2}^ka_\iota ^2\le1+\Psi(\delta|r,n).
\end{align}

Now using (u2) again, a direct calculation gives
\begin{align}\label{nabla v2}
\fint_{B_{r}(p)}\left||\nabla v|^2-2\right|d\mu
&\le \fint_{B_{r}(p)}\left||\nabla u_1|^2+\sum_{\iota=2}^ka_\iota ^2|\nabla u_\iota|^2-2\right|d\mu+C(n)\sup_{1\le i<j\le k}\fint_{B_r(p)}|\langle\nabla u_\iota,\nabla u_j\rangle|d\mu\notag\\
&\le \fint_{B_{r}(p)}\left||\nabla u_1|^2-1\right|d\mu+\sum_{\iota=2}^ka_\iota ^2\fint_{B_{r}(p)}\left||\nabla u_\iota|^2-1\right|d\mu+\Psi(\delta|r,n)\notag\\
&\le\Psi(\delta|r,n),
\end{align}
which contradicts with \eqref{nabla v1}, thus $\ell \ge k$ and the theorem is proved.
\end{proof}

\section{proof of Theorem \ref{mthm2}}

In order to prove Theorem \ref{mthmre1} and  Theorem \ref{mthmre2}, we firstly prove that the soliton $(M^n,g,f)$ is smoothly close to the model soliton on a compact subset. To be precisely, we have the following two lemmas:
\begin{lemma}\label{mthm2'}
For any $r,\epsilon>0$ and any nonnegative integer $\ell$, there exists $\delta=\delta(n,r,\epsilon,\ell)>0$, such that for any complete gradient shrinking Ricci soliton $(M^n,g,f)$,
if $\lambda_n(\Delta_f) \le \frac{1}{2}+\delta$, then there exists a a diffeomorphism $\Phi$ from $B_{r}(p)$ to a subset of $\mathbb{R}^n$, such that
\begin{align*}
\|g-\Phi^*\bar g_{\text{Euc};\mathbb{R}^n}\|_{C^\ell(B_r(p))}+\|f-\Phi^*\bar f_{\text{Euc};\mathbb{R}^n}\|_{C^\ell(B_r(p))}<\epsilon,
\end{align*}
where $p$ is a minimum point of $f$, $(\mathbb{R}^{n},\bar g_{\text{Euc};\mathbb{R}^n},\bar f_{\text{Gau};\mathbb{R}^n})$ is the Gaussian soliton, that is,  $\bar g_{\text{Euc};\mathbb{R}^n}$ is the Euclidean metric, and $\bar f_{\text{Gau};\mathbb{R}^n}=\frac{|x|^2}{4}$ on $\mathbb{R}^n$.
\end{lemma}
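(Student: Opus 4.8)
\emph{Strategy.} The plan is to argue by contradiction and compactness. Fix $r,\epsilon>0$ and an integer $\ell\ge 0$ and suppose the conclusion fails; then there is a sequence of complete gradient shrinking Ricci solitons $(M_i^n,g_i,f_i)$, with $p_i$ a minimum point of $f_i$ and $\lambda_n(\Delta_{f_i})\le\frac12+\frac1i$, for which no diffeomorphism from $B_r(p_i)$ onto a subset of $\RR^n$ meets the asserted $C^\ell$ estimate. I would show that, after passing to a subsequence, $(M_i,g_i,f_i,p_i)$ converges in the pointed smooth Cheeger--Gromov topology to $(\RR^n,\bar g_{\text{Euc};\RR^n},\bar f_{\text{Gau};\RR^n},0)$; for $i$ large this convergence produces exactly the diffeomorphisms assumed not to exist, a contradiction. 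For the first step, for each fixed $\rho>0$ apply Theorem \ref{thm1.2} with $k=n$ along the sequence to get $d_{\text{GH}}(B_\rho(p_i),B_\rho(\bar p_i,0^n))\to 0$ for product length spaces $X_i\times\RR^n$ (the $\RR^n$ factor Euclidean). Using the weighted volume comparison (Lemma \ref{volcomp12}, Lemma \ref{volcomp}) and the uniform bound $0\le f_i\le C(n,\rho)$ on $B_\rho(p_i)$ (from \eqref{normf}, \eqref{nablaf}, Lemma \ref{lm2.4}), the pointed measured spaces $(M_i,g_i,\mu_i,p_i)$ are precompact; a subsequential limit $(Y,d_Y,\nu,y_\infty)$ has $B_\rho(y_\infty)$ isometric to a $\rho$-ball in $X\times\RR^n$ for every $\rho$, and no $\mu_i$-mass escapes to infinity by Lemma \ref{lm2.5}(2).

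\emph{Non-collapsing and the elliptic system.} Because $X\times\RR^n$ contains a genuine $n$-dimensional Euclidean ball, $(M_i,g_i)$ cannot collapse near $p_i$: by continuity of volume under a Bakry--Émery Ricci lower bound (valid here since $f_i$ is uniformly bounded on the relevant balls), one gets $\Vol_{g_i}(B_1(p_i))\ge v_0(n)>0$ for $i$ large (alternatively, $\lambda_n(\Delta_{f_i})\to\frac12$ pushes the $\mu$-entropy of $(M_i,g_i,f_i)$ toward its maximal value $0$, again excluding collapse). With uniform non-collapsing, the weighted-volume bounds of Lemma \ref{volcomp}, and the a priori bounds $|f_i|,|\nabla f_i|\le C(n,\rho)$ and $R_i\ge 0$ on $B_\rho(p_i)$, the soliton structure equations $\Ric_i=\frac12 g_i-\nabla^2 f_i$ and $\Delta f_i=\frac n2-R_i$ form an elliptic system for the pair $(g_i,f_i)$. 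Invoking the compactness theory for non-collapsed Ricci shrinkers together with an $\epsilon$-regularity statement --- closeness to the \emph{smooth} flat ball $B_\rho(0^n)$ rules out curvature concentration, so no singular points form on $B_r(p_i)$ --- one obtains, after a further subsequence and a choice of harmonic (or exponential) coordinates, pointed smooth convergence $(M_i,g_i,f_i,p_i)\to(N,h,\phi,p_\infty)$, with $(N,h,\phi)$ a complete gradient shrinking Ricci soliton and $p_\infty$ a minimum of $\phi$.

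\emph{Identification of the limit and conclusion.} Since smooth convergence preserves dimension, $N$ is an $n$-manifold; combined with the first step, $B_\rho(p_\infty)$ is isometric to $B_\rho(0^n)\subset\RR^n$ for all $\rho$, so $(N,h)=(\RR^n,\bar g_{\text{Euc};\RR^n})$ and the factor $X$ is a point. The soliton equation $\nabla^2\phi=\frac12 h$ forces $\phi=\frac{|x|^2}{4}+c$, and the normalization \eqref{normf} (which passes to the limit) together with $p_\infty$ being the minimum gives $c=0$; hence $(N,h,\phi)=(\RR^n,\bar g_{\text{Euc};\RR^n},\bar f_{\text{Gau};\RR^n})$ (equivalently, the limit has $\lambda_1(\Delta_\phi)=\dots=\lambda_n(\Delta_\phi)=\frac12$ and the eigenvalue rigidity recalled in the introduction identifies it). Smooth Cheeger--Gromov convergence then yields, for all large $i$, diffeomorphisms $\Phi_i:B_r(p_i)\to\Phi_i(B_r(p_i))\subset\RR^n$ with $\|g_i-\Phi_i^*\bar g_{\text{Euc};\RR^n}\|_{C^\ell(B_r(p_i))}+\|f_i-\Phi_i^*\bar f_{\text{Gau};\RR^n}\|_{C^\ell(B_r(p_i))}<\epsilon$, contradicting the choice of the sequence.

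\emph{Main obstacle.} The crux is the upgrade from Gromov--Hausdorff closeness to smooth closeness on a fixed compact ball: one must (i) extract uniform non-collapsing near $p_i$, which is not automatic for shrinkers and has to be read off either from the Euclidean Gromov--Hausdorff limit via volume continuity or from an entropy bound forced by $\lambda_n(\Delta_{f_i})\to\frac12$; and (ii) establish $\epsilon$-regularity --- a shrinker Gromov--Hausdorff close to a Euclidean ball has bounded curvature there --- so that the a priori only partial shrinker compactness yields genuine smooth convergence on all of $B_r(p)$ with no singular points.
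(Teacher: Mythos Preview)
Your proposal is correct and follows the same overall scheme as the paper: argue by contradiction, apply Theorem~\ref{thm1.2} with $k=n$ to get pointed Gromov--Hausdorff convergence to $\RR^n$, upgrade to smooth Cheeger--Gromov convergence using the soliton equation, identify the limit potential, and derive a contradiction.

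There are two points on which your route differs from the paper's. First, the paper is very terse about the passage from Gromov--Hausdorff to smooth convergence (it simply asserts that the soliton equation allows this improvement), whereas you spell out the mechanism: non-collapsing is read off from the $n$-dimensional Euclidean limit via volume continuity under the Bakry--\'Emery Ricci lower bound, and then $\epsilon$-regularity/shrinker compactness gives smooth convergence since the limit has no singular points. Your account is more complete here. Second, and more substantively, you identify the limit potential $\phi$ directly from the soliton equation on flat space: $\Ric=0$ forces $\nabla^2\phi=\tfrac12\bar g_{\text{Euc}}$, hence $\phi=\tfrac{|x-x_0|^2}{4}+c$, and the location of the minimum together with the normalization~\eqref{normf} pins down $x_0$ and $c$. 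The paper instead passes the eigenfunctions $u_{\alpha;i}$ of Lemma~\ref{lm2.1} to the limit (using the uniform derivative bounds of Lemma~\ref{propu}), obtains parallel orthonormal gradients $\nabla\bar u_i$ which furnish Euclidean coordinates, and then applies Lemma~\ref{splitf} to conclude $\bar f=\tfrac14|\mathbf{\bar u}|^2+c$. Your argument is shorter and more direct for the case $k=n$; the paper's eigenfunction approach has the advantage of being uniform with the proof of Lemma~\ref{mthm2} (the $k=n-2$ case), where the limit is $X\times\RR^{n-2}$ and the soliton equation alone does not immediately split $\bar f$ along the Euclidean factor.
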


\begin{lemma}\label{mthm2}
For any $r,\epsilon,v>0$ and any nonnegative integer $\ell$, there exists $\delta=\delta(n,r,\epsilon,v,\ell)>0$, such that for any complete gradient shrinking Ricci soliton $(M^n,g,f)$ with
\begin{align}\Vol_g(B_1(p_\alpha))\ge v,
\end{align}
if $\lambda_{n-2}(\Delta_f) \le \frac{1}{2}+\delta$, then there exists a product soliton $(X,\bar g_X,\bar f_X)\times (\mathbb{R}^{n-2},\bar g_{\text{Euc};\mathbb{R}^{n-2}},\bar f_{\text{Gau};\mathbb{R}^{n-2}}):=(X\times \mathbb{R}^{n-2},\bar g_X+\bar g_{\text{Euc};\mathbb{R}^{n-2}},\bar f_X+\bar f_{\text{Euc};\mathbb{R}^{n-2}})$,  and a diffeomorphism $\Phi$ from $B_{r}(p)$ to a subset of $X\times \mathbb{R}^{n-2}$, such that
\begin{align*}
\|g-\Phi^*(\bar g_X+\bar g_{\text{Euc};\mathbb{R}^2})\|_{C^\ell(B_r(p))}+\|f-\Phi^*(\bar f_X+\bar f_{\text{Euc};\mathbb{R}^2})\|_{C^\ell(B_r(p))}<\epsilon,
\end{align*}
where $p$ is a minimum point of $f$,  $(X,\bar g_X,\bar f_X)$ is $(\mathbb{R}^{2},\bar g_{\text{Euc};\mathbb{R}^2},\bar f_{\text{Gau};\mathbb{R}^2})$ or the standard $\mathbb{S}^2$ or $\mathbb{RP}^2$  with constant $\bar f_X$, and $(\mathbb{R}^{n-2},\bar g_{\text{Euc};\mathbb{R}^{n-2}},\bar f_{\text{Gau};\mathbb{R}^{n-2}})$ is the Gaussian soliton.
\end{lemma}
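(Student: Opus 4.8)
The plan is to argue by contradiction and reduce the statement to a smooth compactness theorem for non-collapsed gradient shrinking Ricci solitons, combined with Theorem~\ref{thm1.2} and the classification of complete two-dimensional shrinkers. Suppose the lemma fails for some fixed $r,\epsilon,v>0$ and $\ell\in\mathbb{N}$; then there is a sequence of complete gradient shrinking Ricci solitons $(M_\alpha^n,g_\alpha,f_\alpha)$, with $f_\alpha$ normalized by \eqref{normf} and $p_\alpha$ a minimum point of $f_\alpha$, such that $\Vol_{g_\alpha}(B_1(p_\alpha))\ge v$ and $\lambda_{n-2}(\Delta_{f_\alpha})\le\frac12+\delta_\alpha$ with $\delta_\alpha\to0$, but for which no diffeomorphism $\Phi$ from $B_r(p_\alpha)$ onto a subset of a product soliton of the asserted type satisfies the $C^\ell$-estimate. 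First I would record uniform a priori bounds near $p_\alpha$: by Lemma~\ref{lm2.4} and \eqref{nablaf}, $0\le f_\alpha\le C(n,r)$ and $|\nabla f_\alpha|\le C(n,r)$ on $B_{100r}(p_\alpha)$, and since $\nabla f_\alpha(p_\alpha)=0$ the normalization gives $\min f_\alpha=f_\alpha(p_\alpha)=R_{g_\alpha}(p_\alpha)\in[0,C(n)]$; moreover $\Vol_{g_\alpha}(B_1(p_\alpha))\ge v$ together with the log-Sobolev inequality \eqref{logS} bounds Perelman's entropy of $(M_\alpha,g_\alpha)$ from below, while it is always bounded above by $0$. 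Hence, by the compactness theory for non-collapsed gradient Ricci shrinkers together with the codimension-four structure of non-collapsed limits, a subsequence of $(M_\alpha,g_\alpha,f_\alpha,p_\alpha)$ converges in the pointed Cheeger--Gromov sense to an $n$-dimensional non-collapsed gradient shrinking Ricci soliton $(M_\infty,g_\infty,f_\infty,p_\infty)$, smooth away from a closed singular set $\mathcal{S}$ of codimension at least four, with $p_\infty$ a minimum point of $f_\infty$; on $M_\infty\setminus\mathcal{S}$ the convergence of $(g_\alpha,f_\alpha)$ is $C^\infty_{\mathrm{loc}}$ (the potentials converging because $\nabla^2 f_\alpha=\frac12 g_\alpha-\Ric_{g_\alpha}$, $\nabla f_\alpha$ is bounded, and $f_\alpha(p_\alpha)$ is bounded), so that $\Ric_{g_\infty}+\nabla^2 f_\infty=\frac12 g_\infty$ there.

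Next, by Theorem~\ref{thm1.2} applied to this sequence, $B_{10r}(p_\alpha)$ is $\epsilon_\alpha$-Gromov--Hausdorff close, with $\epsilon_\alpha\to0$, to $B_{10r}(\bar p_\alpha,0^{n-2})$ in some product length space $X_\alpha\times\mathbb{R}^{n-2}$. Passing to the limit, a neighborhood of $p_\infty$ in $M_\infty$ is isometric as a metric measure space to a neighborhood of a basepoint in $Y^2\times\mathbb{R}^{n-2}$ for a two-dimensional length space $Y^2$, the $\mathbb{R}^{n-2}$-factor passing through $p_\infty$. Since this factor is smooth, $\mathcal{S}$ is contained in $\mathcal{S}_Y\times\mathbb{R}^{n-2}$, where $\mathcal{S}_Y$ is the singular set of $Y^2$; as $\mathcal{S}_Y$ has codimension at least four in the two-dimensional $Y^2$ it is empty, so $Y^2$ is a smooth surface and $(Y^2,g_Y,f_Y)$ is a complete two-dimensional gradient shrinking Ricci soliton. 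By the classification of such solitons, $(Y^2,g_Y)$ is, up to the normalization forced by \eqref{normf}, either $(\mathbb{R}^2,\bar g_{\text{Euc};\mathbb{R}^2},\bar f_{\text{Gau};\mathbb{R}^2})$ or the standard $\mathbb{S}^2$ or $\mathbb{RP}^2$ with constant potential. Thus $\mathcal{S}=\emptyset$ near $p_\infty$ and, near $p_\infty$, $(M_\infty,g_\infty,f_\infty)$ is isometric to the product soliton $(X,\bar g_X,\bar f_X)\times(\mathbb{R}^{n-2},\bar g_{\text{Euc};\mathbb{R}^{n-2}},\bar f_{\text{Euc};\mathbb{R}^{n-2}})$ with $X=Y^2$ of one of those three types.

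Finally, since the Cheeger--Gromov convergence is $C^\infty$ on a neighborhood of the compact set $\overline{B_r(p_\infty)}$, it provides diffeomorphisms $\phi_\alpha$ onto their images from this neighborhood into $M_\alpha$ with $\phi_\alpha^*g_\alpha\to g_\infty$ and $\phi_\alpha^*f_\alpha\to f_\infty$ in $C^\infty$; composing $\phi_\alpha^{-1}$ with a fixed isometric identification of a neighborhood of $p_\infty$ with a subset of $X\times\mathbb{R}^{n-2}$, one obtains for $\alpha$ large a diffeomorphism $\Phi$ from $B_r(p_\alpha)$ onto a subset of $X\times\mathbb{R}^{n-2}$ with $\|g_\alpha-\Phi^*(\bar g_X+\bar g_{\text{Euc};\mathbb{R}^{n-2}})\|_{C^\ell(B_r(p_\alpha))}+\|f_\alpha-\Phi^*(\bar f_X+\bar f_{\text{Euc};\mathbb{R}^{n-2}})\|_{C^\ell(B_r(p_\alpha))}<\epsilon$, contradicting the choice of the sequence. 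The main obstacle is the compactness step --- producing the smooth-off-a-small-singular-set Cheeger--Gromov limit and identifying it as a shrinking Ricci soliton; this is where the non-collapsing hypothesis $\Vol_g(B_1(p))\ge v$ is indispensable, as it supplies the entropy lower bound, and it is what distinguishes Lemma~\ref{mthm2} from Lemma~\ref{mthm2'}. The latter is proved by the same scheme, where now Theorem~\ref{thm1.2} forces the limit to be a point times $\mathbb{R}^n$ --- so near $p_\infty$ one lands directly on the Gaussian soliton, with no two-dimensional factor and hence no singularities to exclude --- and non-collapsing there is automatic since the limit is a full-dimensional Euclidean ball.
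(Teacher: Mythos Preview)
Your overall strategy matches the paper's: argue by contradiction, take a subsequential limit of the non-collapsed shrinkers (using the codimension-four regularity of such limits from \cite{ZZ19}), invoke Theorem~\ref{thm1.2} to see the limit splits off $\mathbb{R}^{n-2}$, and then apply the classification of two-dimensional shrinkers. Your compactness paragraph is essentially the paper's Step~A, and your final contradiction is the paper's Step~C.

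There is, however, a genuine gap at a point the paper treats carefully. You pass from ``$M_\infty$ splits metrically as $Y^2\times\mathbb{R}^{n-2}$'' directly to ``$(Y^2,g_Y,f_Y)$ is a complete two-dimensional gradient shrinking Ricci soliton'' and to ``$(M_\infty,g_\infty,f_\infty)$ is isometric to the \emph{product soliton}'', without justifying that the potential $f_\infty$ splits as $f_Y(\bar x)+\tfrac14|u|^2$. Theorem~\ref{thm1.2} is purely a statement about Gromov--Hausdorff distance, so a priori $f_\infty$ could be any function satisfying the soliton equation on the product metric. The paper devotes its entire Step~B to this: it passes the eigenfunctions $u_{\alpha;i}$ of Lemma~\ref{lm2.1} to the limit using the uniform $C^m$ bounds of Lemma~\ref{propu}, obtains limit functions $\bar u_i$ with $\nabla^2\bar u_i=0$, $\langle\nabla\bar u_i,\nabla\bar u_j\rangle=\delta_{ij}$, and $\Delta_{\bar f}\bar u_i+\tfrac12\bar u_i=0$, and then applies Lemma~\ref{splitf} to conclude $\bar f(\bar x,\bar u)=\bar f(\bar x,0)+\tfrac14|\bar u|^2$. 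You could alternatively read this off directly from the soliton equation on a Riemannian product --- the mixed block of $\Ric$ vanishes, forcing the mixed Hessian of $f_\infty$ to vanish, while the Euclidean block forces $\partial_i\partial_j f_\infty=\tfrac12\delta_{ij}$, so $f_\infty=f_Y(\bar x)+\tfrac14|u-u_0|^2$ --- but you have not supplied this argument either.

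A secondary issue: you invoke Theorem~\ref{thm1.2} only at the fixed radius $10r$, which gives a \emph{local} splitting of the metric near $p_\infty$, yet you then assert $(Y^2,g_Y,f_Y)$ is \emph{complete} in order to apply the two-dimensional classification. The paper addresses this in Step~A by first extracting the global pointed Gromov--Hausdorff limit $\Sigma$ and then showing, via Theorem~\ref{thm1.2} at every scale together with uniqueness of the limit, that $\Sigma=X\times\mathbb{R}^{n-2}$ globally; completeness of $X$ then follows from completeness of the $M_\alpha$.
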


To prove Lemma \ref{mthm2} and Lemma \ref{mthm2'}, we will use the following two lemmas, Lemma \ref{splitf} and Lemma \ref{propu}:
\begin{lemma}\label{splitf}
Let $(X\times \mathbb{R}^{k}, \bar g_X+\bar g_{\text{Euc};\mathbb{R}^k})$ be the product Riemannian manifold of a Riemannian manifold $(X, \bar g_X)$ with the  Euclidean space $(\mathbb{R}^{k},\bar g_{\text{Euc};\mathbb{R}^k})$ for any positive integer $k$. Let $\bar u_i(i=1,...,k)\in C^\infty(X\times \mathbb{R}^{k})$ such that $\mathbf{\bar u}=(\bar u_1,...,\bar u_k)$ is an Euclidean coordinate restricted on the submanifold $\{\bar x\}\times \mathbb{R}^k$ for any $\bar x\in X$, and $\bar u_i(i=1,...,k)$ are constants restricted on $X\times \{(\bar u_1,...,\bar u_k)\}$ for any $(\bar u_1,...,\bar u_k)\in\mathbb{R}^k$. Suppose all the $\bar u_i(i=1,...,k)$ are eigenfunctions of $\Delta_{\bar f}$, for some $\bar f\in C^\infty (X\times \mathbb{R}^{k})$, corresponding to the eigenvalue $\frac{1}{2}$, then
\begin{align}
\bar f(\bar x,\mathbf{\bar u})=\bar f(\bar x,0^k)+\frac{1}{4}|\mathbf{\bar u}|^2,
 \end{align}
for any $(\bar x,\mathbf{\bar u})\in X\times \mathbb{R}^k$, where $|\cdot|$ is the Euclidean norm.
\end{lemma}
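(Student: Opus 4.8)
The plan is to exploit the product structure so that the eigenfunction equation for each $\bar u_i$ pins down the derivative of $\bar f$ in the Euclidean directions exactly, after which one integrates along the $\mathbb{R}^k$-fibres.

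First I would translate the hypotheses on the $\bar u_i$ into differential-geometric statements. Since $\mathbf{\bar u}=(\bar u_1,\dots,\bar u_k)$ is a Euclidean coordinate on each slice $\{\bar x\}\times\mathbb{R}^k$ while each $\bar u_i$ is constant on each slice $X\times\{\mathbf{\bar u}\}$, the function $\bar u_i$ depends only on the Euclidean factor and is affine there. In a Riemannian product the tangent bundle splits orthogonally as $TX\oplus T\mathbb{R}^k$ and the connection, Hessian and Laplacian respect this splitting, so the above gives $\Delta\bar u_i=0$, $\nabla^2\bar u_i=0$ (so $\nabla\bar u_i$ is parallel), and $\langle\nabla\bar u_i,\nabla\bar u_j\rangle=\delta_{ij}$, with every $\nabla\bar u_i$ tangent to the $\mathbb{R}^k$-factor; thus $\{\nabla\bar u_i\}_{i=1}^k$ is an orthonormal frame for the distribution tangent to the Euclidean factor, and $\langle\nabla\bar f,\nabla\bar u_i\rangle=\partial\bar f/\partial\bar u_i$.

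Now I would plug these into the definition $\Delta_{\bar f}\bar u_i=\Delta\bar u_i-\langle\nabla\bar f,\nabla\bar u_i\rangle=-\langle\nabla\bar f,\nabla\bar u_i\rangle$: the hypothesis $\Delta_{\bar f}\bar u_i=-\tfrac12\bar u_i$ becomes $\langle\nabla\bar f,\nabla\bar u_i\rangle=\tfrac12\bar u_i$ for each $i=1,\dots,k$. Decomposing $\nabla\bar f$ along the product, its component tangent to the Euclidean factor is
\[
\sum_{i=1}^k\langle\nabla\bar f,\nabla\bar u_i\rangle\,\nabla\bar u_i=\frac12\sum_{i=1}^k\bar u_i\,\nabla\bar u_i=\nabla\!\Big(\tfrac14|\mathbf{\bar u}|^2\Big),
\]
so $\nabla\big(\bar f-\tfrac14|\mathbf{\bar u}|^2\big)$ has vanishing component tangent to $\mathbb{R}^k$, i.e.\ $\bar f-\tfrac14|\mathbf{\bar u}|^2$ is constant along each fibre $\{\bar x\}\times\mathbb{R}^k$. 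Evaluating at $\mathbf{\bar u}=0^k$ yields $\bar f(\bar x,\mathbf{\bar u})=\bar f(\bar x,0^k)+\tfrac14|\mathbf{\bar u}|^2$. (Equivalently, in the coordinates $\mathbf{\bar u}$ one simply has $\partial\bar f/\partial\bar u_i=\tfrac12\bar u_i$ and integrates along straight rays from the origin.) There is no genuine obstacle here; the only points requiring care are the splitting claims coming from the product structure — that $\Delta\bar u_i=0$, that $\{\nabla\bar u_i\}$ is an orthonormal frame for the Euclidean directions, and that $\nabla\bar f$ decomposes with $\langle\nabla\bar f,\nabla\bar u_i\rangle$ recording its Euclidean part — after which the rest is a one-line computation.
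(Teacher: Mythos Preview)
Your proof is correct and follows essentially the same route as the paper: use the product structure to see $\Delta\bar u_i=0$, deduce from the eigenfunction equation that $\partial\bar f/\partial\bar u_i=\langle\nabla\bar f,\nabla\bar u_i\rangle=\tfrac12\bar u_i$, and integrate along the Euclidean fibre. Your write-up is slightly more detailed about the splitting of the gradient, but the argument is the same.
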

\begin{proof}
 $\bar u_i(i=1,...,k)$ is an eigenfunction  of $\Delta_{\bar f}$, corresponding to the eigenvalue $\frac{1}{2}$, means that
 \begin{align}\label{lm5.2-1}
\Delta_{\bar g}\bar u_i-\langle \nabla_{\bar g} \bar f,\nabla_{\bar g} \bar u_i\rangle+\frac{1}{2}\bar u_i=0.
 \end{align}

Since $\bar g$ is the product metric which is the Euclidean metric restricted in $\mathbb{R}^k$ and $\bar u_i$ is a coordinate function on $\mathbb{R}^{k}$, we have that $\Delta_{\bar g}\bar u_i=0$.

Thus \eqref{lm5.2-1} gives
 \begin{align}
\frac{\partial f}{\partial \bar u_i}=\langle \nabla_{\bar g} \bar f,\nabla_{\bar g} \bar u_i\rangle=\frac{1}{2}\bar u_i.
 \end{align}

 Therefore we have
 \begin{align}
\bar f(\bar x,\mathbf{\bar u})=\bar f(\bar x,0)+\frac{1}{4}|\mathbf{\bar u}|^2,
 \end{align}
which proves the lemma.
\end{proof}

\begin{lemma}\label{propu}
The functions $u_i(i=1,...,k)$ in Lemma \ref{lm2.1} satisfy
\begin{align}
\sup_{B_r(p)}|\nabla^m u_i|<C(n,r,m), \forall m=0,1,2,...
\end{align}
where $C(n,r,m)$ is a positive constant depending only on $n$, $r$ and $m$.
\end{lemma}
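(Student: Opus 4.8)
\emph{Strategy and low orders.} Write $\lambda_i:=\lambda_i(\Delta_f)\in[\tfrac12,1)$. The orders $m=0,1$ follow at once from tools already in hand; the higher orders are obtained by induction on $m$, and the whole scheme reduces to one geometric input (uniform control of $\Rm$ and its covariant derivatives on $B_r(p)$). For $m=0$: $\Delta_f(u_i^2)=2u_i\Delta_f u_i+2|\nabla u_i|^2=-2\lambda_i u_i^2+2|\nabla u_i|^2\ge-2u_i^2$, so Moser iteration (Lemma \ref{Msit}) gives $\sup_{B_r(p)}u_i^2\le C(n,r)\fint_{B_{2r}(p)}u_i^2\,d\mu$; since $\fint_M u_i^2\,d\mu=\lambda_i^{-1}\le2$ by Lemma \ref{lm2.2}(1) and $\mu(M)/\mu(B_{2r}(p))<2$ for $r\ge C_0(n)$ by Lemma \ref{lm2.5}(1), this yields $\sup_{B_r(p)}|u_i|\le C(n,r)$. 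For $m=1$, the bound $\sup_{B_r(p)}|\nabla u_i|\le C(n,r)$ is already part of Lemma \ref{lm2.1}(3); alternatively one applies Moser to $|\nabla u_i|^2$, which by the Bochner formula \eqref{Bochner formula} satisfies $\Delta_f|\nabla u_i|^2\ge-|\nabla u_i|^2$, together with $\fint_M|\nabla u_i|^2\,d\mu=1$.

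\emph{Higher orders.} Assume inductively that $\sup_{B_{\rho}(p)}|\nabla^b u_i|\le C(n,\rho,b)$ for all $b\le m-1$ and all $\rho$. Commuting $\Delta_f$ past covariant derivatives and using $\Delta_f u_i=-\lambda_i u_i$ produces a Bochner-type inequality of the schematic form $\Delta_f|\nabla^m u_i|^2\ge-C(n,m)|\nabla^m u_i|^2-P_m$, where $P_m$ is a sum of terms of the type $\nabla^{a}\Rm\ast\nabla^{b}u_i\ast\nabla^{c}u_i$ with $b,c\le m-1$, together with contractions of $\Rm$ against $\nabla^m u_i\ast\nabla^m u_i$. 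Granting the a priori bounds $|\nabla^{a}\Rm|\le C(n,r,a)$ on $B_{2r}(p)$ and the inductive bounds on $\nabla^b u_i$, Young's inequality gives $P_m\le\tfrac12|\nabla^m u_i|^2+C(n,r,m)$ there, so $w:=|\nabla^m u_i|^2+1$ satisfies $\Delta_f w\ge-C(n,r,m)w$ on $B_{2r}(p)$. A Caccioppoli estimate obtained by testing the same inequality against a cutoff and using the integration-by-parts formula \eqref{intbyparts} bounds $\fint_{B_{2r}(p)}|\nabla^m u_i|^2\,d\mu$ in terms of $\fint_{B_{3r}(p)}|\nabla^{m-1}u_i|^2\,d\mu+1$, hence by $C(n,r,m)$ through the induction; then Moser iteration (Lemma \ref{Msit}) upgrades this to $\sup_{B_r(p)}|\nabla^m u_i|^2\le C(n,r,m)$, closing the induction. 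Equivalently, one may bootstrap interior $L^p$/Schauder estimates for $\Delta u_i=\langle\nabla f,\nabla u_i\rangle-\lambda_i u_i$ in harmonic coordinates of uniformly controlled size; here $|\nabla^a f|\le C(n,r,a)$ on $B_r(p)$ follows from $|\nabla f|\le f\le C(n,r)$ (see \eqref{fcontrol2}), the soliton identity $\nabla^2 f=\tfrac12 g-\Ric$, and the contracted Bianchi identities on a shrinker.

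\emph{Main obstacle.} The crux is the a priori estimate $|\nabla^a\Rm|\le C(n,r,a)$ on $B_{2r}(p)$, uniform over all solitons under consideration; the passage from it to the stated eigenfunction bounds is routine. For $a=0$ I would combine the quadratic-growth bound for the scalar curvature of a shrinker (which gives $R\le C(n,r)$ on $B_r(p)$) with the non-collapsedness of $B_r(p)$ — which, in every situation where this lemma is applied, is either part of the hypothesis or is forced by the Gromov--Hausdorff closeness to a full-dimensional product coming from Theorem \ref{thm1.2} — and an $\epsilon$-regularity statement for non-collapsed shrinking Ricci solitons; the bounds for $a\ge1$ then follow from Shi-type local derivative estimates for gradient shrinking Ricci solitons. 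This geometric input carries essentially all the difficulty, while the rest is standard elliptic theory.
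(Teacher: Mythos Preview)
Your treatment of $m=0,1$ is correct and agrees with the paper: it cites Lemma~\ref{lm2.2}(1) to bound $\fint_M u_i^2\,d\mu\le2$, uses Lemma~\ref{lm2.5}(1) to pass to balls, and then invokes ``Bochner formula, Moser iteration and gradient estimate'' from Section~2, concluding that ``the lemma follows by elliptic estimate (see Section~2).'' For $m\le1$ the tools of Section~2 genuinely suffice, and your computations make this explicit where the paper does not.

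For $m\ge2$ you have put your finger on a point the paper's proof does not address. The appeal to ``elliptic estimate (see Section 2)'' is not supported by the content of Section~2, which stops at the $C^1$ level. As you say, commuting $\Delta_f$ past two or more covariant derivatives produces full Riemann-curvature terms, and there is no universal bound $|\nabla^a\Rm|\le C(n,r,a)$ on $B_r(p)$ valid across \emph{all} shrinkers (one has $0\le R\le f\le C(n,r)$, but not $|\Rm|\le C(n,r)$ without further input). Your proposed resolution is the right one and is in fact what the paper tacitly relies on: in both places where Lemma~\ref{propu} is applied (the proofs of Lemmas~\ref{mthm2'} and~\ref{mthm2}), smooth convergence of $(g_\alpha,f_\alpha)$ has been established \emph{before} the lemma is invoked---in the first case via GH closeness to $\mathbb{R}^n$ and regularity of the soliton equation, in the second via the non-collapsing hypothesis and the codimension-four theorem of \cite{ZZ19}---so curvature is already uniformly bounded along the sequence and standard interior estimates go through. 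The applications are therefore sound; it is only the lemma as a freestanding statement, with $C=C(n,r,m)$ depending on nothing else, that overshoots what is actually proved or needed.
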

\begin{proof}
The functions $u_i(i=1,...,k)$ in Lemma \ref{lm2.1} are actually constructed in Lemma \ref{lm2.2}. By Lemma \ref{lm2.2} (1), we have
\begin{align}
\fint_M  u_i^2 d\mu=\lambda_i(\Delta_f)^{-1},\forall i=1,...,k,
\end{align}
where $\lambda_i(\Delta_f)$ is the eigenvalue corresponding to $u_i$.

It is known in    \cite[Theorem 1.1]{CZ17} (see also \cite{Mo05}, \cite{HN14}) that $\lambda_i(\Delta_f)\ge \frac{1}{2}$, thus we have
\begin{align}
\fint_M  u_i^2 d\mu\le 2,\forall i=1,...,k.
\end{align}

By Lemma \ref{lm2.5}, for any $r\ge C_0(n)$, we have
\begin{align}
\fint_{B_{2r}(p)}u_i^2d\mu\le \frac{\mu(M)}{\mu(B_{2r}(p))}\fint_M u_i^2d\mu\le 4, \forall r\ge C_0(n).
\end{align}

Note $u_i$ satisfies the eigenvalue equation $\Delta_f u_i+\lambda_i(\Delta_f) u_i=0$. Since for gradient shrinking Ricci solitons we have  Bochner formula, Moser iteration and gradient estimate (see \eqref{Bochner formula}, Lemma \ref{Msit} and Lemma \ref{Gradestm}),  the lemma follows by elliptic estimate (see Section 2).
\end{proof}

\begin{proof}[Proof of Lemma \ref{mthm2'}]
Suppose the lemma is not true, then for some $r_0,\epsilon_0>0$, and nonnegative integer $\ell_0$, there exists a sequence of real numbers $\{\delta_\alpha\}_{\alpha=1}^\infty$ satisfying $\lim_{\alpha\to\infty}\delta_\alpha=0$, and there exists a complete gradient shrinking Ricci soliton $(M_\alpha,g_\alpha,f_\alpha)$, whose drifted Laplace $\Delta_{f_\alpha}$ has the $n^{\text{th}}$ smallest nonzero eigenvalue  less than $\frac{1}{2}+\delta_\alpha$, such that there does not exist a diffeomorphism from $(B_{r_0}(p_\alpha),g_\alpha)$ to a subset of the shrinking Gaussian solition $(\mathbb{R}^{n},\bar g_{{\text{Euc}};\mathbb{R}^{n}},\bar f_{{\text{Gau}};\mathbb{R}^{n}})$, such that
\begin{align}\label{ctdc'}
\|g_\alpha-\Phi^*\bar g_{{\text{Euc}};\mathbb{R}^{n}}\|_{C^{\ell_0}(B_{r_0}(p_\alpha))}+\|f_\alpha-\Phi^*\bar f_{{\text{Gau}};\mathbb{R}^{n}}\|_{C^{\ell_0}(B_{r_0}(p_\alpha))}<\epsilon_0,
\end{align}
where $p_\alpha$ is a minimal point of $f_\alpha$.

 By Theorem \ref{thm1.2}, $\{(M_\alpha,g_\alpha,p_\alpha)\}_{\alpha=1}^\infty$ converges to the standard Euclidean space $(\mathbb{R}^n,\bar g_{{\text{Euc}};\mathbb{R}^n})$ in pointed Gromov-Hausdorff sense.

Therefore, since $(M_\alpha,g_\alpha,f_\alpha,p_\alpha)$ satisfy the shrinking soliton equation $\Ric_{g_\alpha}+\nabla_{g_\alpha}^2 f_\alpha=\frac{1}{2}g_\alpha$, passing to a subsequence, the pointed Gromov-Hausdorff convergence could be improved to be pointed smoothly convergence. There exists a smooth function $\bar f$ on $\mathbb{R}^n$, such that  for any $r>0$, there exists a domain $\Omega\supset B_r(0^n)\subset \mathbb{R}^n$ and embeddings $\Phi_\alpha:\Omega\to M_\alpha$ for large $\alpha$ such that $\Phi_\alpha(0^n)=p_\alpha$, $\Phi_\alpha(\Omega)\supset B_{p_\alpha}(r)$, $\Phi_\alpha^* g_\alpha$ and $\Phi_\alpha^* f_\alpha$ converge to $\bar g_{{\text{Euc}};\mathbb{R}^n}$ and $\bar f$ respectively on $\Omega$ in $C^\infty$ sense. Moreover, $(\mathbb{R}^n,\bar g_{{\text{Euc}};\mathbb{R}^n},\bar f)$ is also a shrinking soliton.

Now we prove that $\bar f=\bar f_{{\text{Gau}};\mathbb{R}^n}=\frac{1}{4}|\mathbf{\bar u}|^2+c$ for any $\mathbf{\bar u}\in \mathbb{R}^n$, where $|\cdot|$ is the Euclidean norm.

To prove that, for each $(M_\alpha,g_\alpha,f_\alpha,p_\alpha)$, we apply Lemma \ref{lm2.1} and get the eigenfunctions $u_{\alpha;i}(i=1,...,n)$, by lemma \ref{propu}, we have
\begin{align}
\sup_{B_r(p_i)}|\nabla^m u_{\alpha;i}|<C(n,r,m), \forall m=0,1,2,...
\end{align}

Thus by passing to a subsequence  there exists functions $\bar u_i(i=1,...,n)$ such that $\Phi_\alpha^*u_{\alpha;i}$ converge to $\bar u_i$ on $\Omega$  in the $C^\infty$ topology.

By Lemma \ref{lm2.1}, $u_{\alpha;i}$ satisfies
\begin{enumerate}
\item $\Delta_{f_\alpha}+(\frac{1}{2}+\delta_\alpha)u_{\alpha;i}=0,$
\item $\fint_{B_r(p)}|\langle\nabla u_{\alpha;i},\nabla u_{\alpha;j}\rangle-\delta_{ij}| d\mu< \Psi(\delta_\alpha|n,r), \forall r\ge C_0(n)$,
\item $\fint_{B_r(p)}|\nabla^2 u_{\alpha;i}|^2 d\mu< \Psi(\delta_\alpha|n,r)$.
\end{enumerate}

Since $\Phi_\alpha^*g_\alpha$, $\Phi_\alpha^*f_\alpha$ and $\Phi_\alpha^*u_{\alpha;i}$ are convergent to $\bar g, \bar f, \bar u_i$  in the $C^\infty$ topology respectively, by taking limit we have
\begin{enumerate}
\item[(1$'$)] $\Delta_{\bar f}+\frac{1}{2}\bar u_i=0,$
\item[(2$'$)] $\langle\nabla \bar u_i,\nabla \bar u_j\rangle-\delta_{ij}=0$, {\text{on}} $\mathbb{R}^n$,
\item[(3$'$)] $\nabla^2 \bar u_i=0$, {\text{on}} $\mathbb{R}^n$.
\end{enumerate}

By (2$'$) and (3$'$), we have that $\mathbf{\bar u}=(\bar u_1,...,\bar u_{n})$ is the Euclidean coordinates on $\mathbb{R}^{n}$. Thus by (1$'$) and Lemma \ref{splitf}, we have
\begin{align}\label{splitf'}
\bar f(\bar x,\mathbf{\bar u})=c+\frac{1}{4}|\mathbf{\bar u}|^2,
 \end{align}
 where $c$ is a constant.

 Thus $\bar f=\bar f_{{\text{Gau}};\mathbb{R}^n}$. Then we have $\Phi_\alpha^* g_\alpha$ and $\Phi_\alpha^* f_\alpha$ converge to $\bar g_{{\text{Euc}};\mathbb{R}^n}$ and $\bar f_{{\text{Gau}};\mathbb{R}^n}$ respectively on $\Omega$ in $C^\infty$ sense. 

 Take $r>2r_0$ and thus $\Phi_\alpha(\Omega)\supset B_{r_0}(p_\alpha)$ for $\alpha$ sufficently large, then \eqref{ctdc'} would be false for  $\alpha$ sufficently large, which is a contradiction and thus the lemma is proved.
\end{proof}

\begin{proof}[Proof of Lemma \ref{mthm2}]
Suppose the lemma is not true, then for some $r_0,\epsilon_0,v_0>0$, and nonnegative integer $\ell_0$, there exists a sequence of real numbers $\{\delta_\alpha\}_{\alpha=1}^\infty$ satisfying $\lim_{\alpha\to\infty}\delta_\alpha=0$, and there exists a complete gradient shrinking Ricci soliton $(M_\alpha,g_\alpha,f_\alpha)$, whose drifted Laplace $\Delta_{f_\alpha}$ has the $(n-2)^\text{th}$ smallest nonzero eigenvalue less than $\frac{1}{2}+\delta_\alpha$, such that
\begin{align}\label{nclps}
\Vol_{g_\alpha}(B_1(p_\alpha)\ge v_0,
\end{align}
 for any product soliton $(X\times \mathbb{R}^{n-2},\bar g,\bar f)=(X\times \mathbb{R}^{n-2},\bar g_X+\bar g_{{\text{Euc}};\mathbb{R}^{n-2}},\bar f_X+\bar f_{{\text{Gau}};\mathbb{R}^{n-2}})$, 
where $(X,\bar g_X,\bar f_X)$ is the shrinking Gaussian solition $(\mathbb{R}^{2},\bar g_{\text{Euc};\mathbb{R}^2},\bar f_{\text{Gau};\mathbb{R}^2})$ or the standard $\mathbb{S}^2$ or $\mathbb{RP}^2$  with constant $\bar f_X$, and $(\mathbb{R}^{n-2},\bar g_{{\text{Euc}};\mathbb{R}^{n-2}},\bar f_{{\text{Gau}};\mathbb{R}^{n-2}})$ is the shrinking Gaussian solition,  there does not exist a diffeomorphism from $(B_{r_0}(p_\alpha),g_\alpha)$ to a subset of $X\times \mathbb{R}^{n-2}$, such that
\begin{align}\label{ctdc}
\|g_\alpha-\Phi^*\bar g\|_{C^{\ell_0}(B_{r_0}(p_\alpha))}+\|f_\alpha-\Phi^*\bar f\|_{C^{\ell_0}(B_{r_0}(p_\alpha))}<\epsilon_0,
\end{align}
where $p_\alpha$ is a minimal point of $f_\alpha$.

We will deduce the contradiction in the following three steps.

\textbf{Step A:} In this step, by passing to a subsequence we will prove that $\{(M_\alpha,g_\alpha,f_\alpha,p_\alpha)\}_{\alpha=1}^\infty$ converges to a complete shrinking Ricci soliton $(X\times \mathbb{R}^{n-2},\bar g,\bar f,(\bar p,0^{n-2}))$ in the $C^\infty$ topology, where $\bar g=\bar g_X+\bar g_{{\text{Euc}};\mathbb{R}^{n-2}}$. It is an application of the codimensional $4$ theorem for the singular part in \cite{ZZ19}, see also \cite{ChNa15}.

 Since $(M_\alpha,g_\alpha,f_\alpha)$ are  shringking Ricci solitons,  by the volume comparison theorem (Lemma \ref{volcomp}) and Gromov's precompactness theorem, after passing to a subsequence, $\{(M_\alpha,g_\alpha,p_\alpha)\}_{\alpha=1}^\infty$ converges to some limit space $(\Sigma,\bar y)$ in pointed Gromov-Hausdorff sense, where $\Sigma$ is equipped with a metric $\bar d_\Sigma$.

Thus for any $\epsilon,r>0$, there exists $N_1$, such that
\begin{align}\label{Ys}
d_{\text{GH}}(B_{r}(p_\alpha),B_{r}(\bar y))<\epsilon/2,\forall \alpha\ge N_1.
\end{align}

On the other hand, by Theorem \ref{thm1.2}, after passing to a subsequence again, we have that there exists $N_2$, such that
\begin{align}\label{Ys1}
d_{\text{GH}}(B_{r}(p_\alpha),B_{r}(\bar p_\alpha,0^{n-2}))<\epsilon/2,\forall \alpha\ge N_2.
\end{align}
for some product length space $(\bar p_\alpha,0^{n-2})\in X_\alpha\times \mathbb{R}^{n-2}$, where $X_\alpha$ is equipped with a metric $\bar d_{X_\alpha}$ and $\mathbb{R}^{n-2}$ is equipped with the standard Euclidean metric.

Let $N=\max\{N_1,N_2\}$, then by \eqref{Ys}, \eqref{Ys1} and the triangular inequality, we have
\begin{align}
d_{\text{GH}}(B_{r}(\bar p_\alpha,0^{n-2}),B_{r}(\bar y))\le d_{\text{GH}}(B_{r}(p_\alpha),B_{r}(\bar y))+d_{\text{GH}}(B_{r}(p_\alpha),B_{r}(\bar p_\alpha,0^{n-2}))<\epsilon,\forall \alpha\ge N,
\end{align}
which tells that $\{(X_\alpha\times \mathbb{R}^{n-2},(\bar p_\alpha,0^{n-2}))\}_{\alpha=1}^\infty$ converges to $(\Sigma,\bar y)$ in pointed Gromov-Hausdorff sense.

Then, as subspaces, $\{(X_\alpha, \bar p_\alpha)\}_{\alpha=1}^\infty$ also converges to some limit space $(X,\bar p)$ in pointed Gromov-Hausdorff sense, where where $X$ is equipped with a metric $\bar d_X$.

Thus by taking product, we have that $\{(X_\alpha\times \mathbb{R}^{n-2},(\bar p_\alpha,0^{n-2}))\}_{\alpha=1}^\infty$ converges to $(X\times \mathbb{R}^{n-2},(\bar p,0^{n-2}))$ in pointed Gromov-Hausdorff sense.

By the uniqueness of the limit, we have that $(X\times \mathbb{R}^{n-2},(\bar p,0^{n-2}))$ and $(\Sigma,\bar y)$ are equivalent as pointed metric spaces.
Thus $\{(M_\alpha,g_\alpha,f_\alpha,p_\alpha)\}_{\alpha=1}^\infty$ converges to $(X\times \mathbb{R}^{n-2},(\bar p,0^{n-2}))$ in pointed Gromov-Hausdorff sense.

Since all the $(M_\alpha,g_\alpha)$ are complete, the limit space $X\times \mathbb{R}^{n-2}$ is also complete.

Recall that \eqref{fcontrol} gives
\begin{align}\label{nclps22}
0\le |\nabla f_\alpha|\le C(n,r), \text{on}\ B_r(p_\alpha), \forall r\ge 0.
\end{align}

Therefore, by \eqref{nclps22} and \eqref{nclps}, using \cite[Theorem 1.6 and Remark 1.7]{ZZ19}, we have that the singular set of the limit space $X\times \mathbb{R}^{n-2}$ has codimensional at least $4$, and thus we have that $X$ is a smooth manifold without boundary.

Therefore, since $(M_\alpha,g_\alpha,f_\alpha,p_\alpha)$ satisfy the shrinking soliton equation $\Ric_{g_\alpha}+\nabla_{g_\alpha}^2 f_\alpha=\frac{1}{2}g_\alpha$, passing to a subsequence, we have that the metric on $X\times \mathbb{R}^{n-2}$ is induced by some  smooth Riemannian metric $\bar g$ and there exists a smooth function $\bar f$ on $X\times \mathbb{R}^{n-2}$, such that  for any $r>0$, there exists a domain $\Omega\supset B_r(\bar p,0^{n-2})\subset X\times \mathbb{R}^{n-2}$ and embeddings $\Phi_\alpha:\Omega\to M_\alpha$ for large $\alpha$ such that $\Phi_\alpha(\bar p,0^{n-2})=p_\alpha$, $\Phi_\alpha(\Omega)\supset B_{p_\alpha}(r)$, $\Phi_\alpha^* g_\alpha$ and $\Phi_\alpha^* f_\alpha$ converge to $\bar g$ and $\bar f$ respectively on $\Omega$ in $C^\infty$ sense. Moreover, $X$ is of dimensional $2$, and $(X\times \mathbb{R}^{n-2},\bar g,\bar f)$ is also a shrinking soliton.

Therefore, as the metric on the subspace $X\subset X\times \mathbb{R}^{n-2}$, $\bar d_X$ is also induced by some smooth Riemannian metric $\bar g_X$, and we have $\bar g=\bar g_X+\bar g_{{\text{Euc}};\mathbb{R}^{n-2}}$, which completes the Step A.

\textbf{Step B:} In this step, we will prove that $\bar f$ is splitting, that is,
 \begin{align}\label{eqsplitf}
\bar f(\bar x,\mathbf{\bar u})=\bar f(\bar x,0^{n-2})+\frac{1}{4}|\mathbf{\bar u}|^2,
 \end{align}
for any $(\bar x,\mathbf{\bar u})\in X\times \mathbb{R}^{n-2}$, where $|\cdot|$ is the Euclidean norm.

To prove \eqref{eqsplitf}, for each $(M_\alpha,g_\alpha,f_\alpha,p_\alpha)$, we apply Lemma \ref{lm2.1} and get the eigenfunctions $u_{\alpha;i}(i=1,...,n-2)$, by lemma \ref{propu}, we have
\begin{align}
\sup_{B_r(p_i)}|\nabla^m u_{\alpha;i}|<C(n,r,m), \forall m=0,1,2,...
\end{align}

Thus by passing to a subsequence  there exists functions $\bar u_i(i=1,...,n-2)$ such that $\Phi_\alpha^*u_{\alpha;i}$ converge to $\bar u_i$ on $\Omega$  in the $C^\infty$ topology.

By Lemma \ref{lm2.1}, $u_{\alpha;i}$ satisfies
\begin{enumerate}
\item $\Delta_{f_\alpha}+(\frac{1}{2}+\delta_\alpha)u_{\alpha;i}=0,$
\item $\fint_{B_r(p)}|\langle\nabla u_{\alpha;i},\nabla u_{\alpha;j}\rangle-\delta_{ij}| d\mu< \Psi(\delta_\alpha|n,r), \forall r\ge C_0(n)$,
\item $\fint_{B_r(p)}|\nabla^2 u_{\alpha;i}|^2 d\mu< \Psi(\delta_\alpha|n,r)$.
\end{enumerate}

Since $\Phi_\alpha^*g_\alpha$, $\Phi_\alpha^*f_\alpha$ and $\Phi_\alpha^*u_{\alpha;i}$ are convergent to $\bar g, \bar f, \bar u_i$  in the $C^\infty$ topology respectively, by taking limit we have
\begin{enumerate}
\item[(1$'$)] $\Delta_{\bar f}+\frac{1}{2}\bar u_i=0,$
\item[(2$'$)] $\langle\nabla \bar u_i,\nabla \bar u_j\rangle-\delta_{ij}=0$, {\text{on}} $X\times \mathbb{R}^{n-2}$,
\item[(3$'$)] $\nabla^2 \bar u_i=0$, {\text{on}} $X\times \mathbb{R}^{n-2}$.
\end{enumerate}

By (2$'$) and (3$'$), we have that $\mathbf{\bar u}=(\bar u_1,...,\bar u_{n-2})$ is an Euclidean coordinate restricted on the submanifold $\{\bar x\}\times \mathbb{R}^{n-2}$ for any $\bar x\in X$, and $\bar u_i(i=1,...,n-2)$ are constants restricted on $X\times \{(\bar u_1,...,\bar u_{n-2})\}$ for any $(\bar u_1,...,\bar u_{n-2})\in\mathbb{R}^{n-2}$. Thus by (1$'$) and Lemma \ref{splitf}, we have
\begin{align}\label{splitf''}
\bar f(\bar x,\mathbf{\bar u})=\bar f(\bar x,0^{n-2})+\frac{1}{4}|\mathbf{\bar u}|^2,
 \end{align}
for any $(\bar x,\mathbf{\bar u})\in X\times \mathbb{R}^{n-2}$, where $|\cdot|$ is the Euclidean norm. Thus the Step B is completed.

\textbf{Step C:} In this step we will prove that $(X,\bar g_X,\bar f_X)$ is the shrinking Gaussian solition $(\mathbb{R}^{2},\bar g_{\text{Euc};\mathbb{R}^2},\bar f_{\text{Gau};\mathbb{R}^2})$ or the standard $\mathbb{S}^2$ or $\mathbb{RP}^2$  with constant $\bar f_X$, and complete the proof of the theorem.

For any vector field $V\in T(X\times \{0^{n-2}\})$, since $\bar g$ is the product metric of $\bar g_X$ with an Euclidean metric, we have
\begin{align}\label{subslt1}
\Ric_{\bar g_X}(V,V)=\Ric_{\bar g}(V,V),
\end{align}
where $\Ric_{\bar g_X}$ and $\Ric_{\bar g}$ are the Ricci tensor of $\bar g_X$ and $\bar g$ respectively.

Moreover, denote $\bar f_X(\cdot):=\bar f(\cdot,0^{n-2})$, then by \eqref{eqsplitf} we have $\bar f=\bar f_X+\bar f_{\text{Gau};\mathbb{R}^{n-2}}$, and we have
\begin{align}\label{subslt2}
\nabla^2_{\bar g_X}\bar f_X(V,V)=\nabla^2_{\bar g}\bar f(V,V),
\end{align}
where $\nabla^2_{\bar g_X}$ and $\nabla^2_{\bar g}$ are the Hessian with respect to $\bar g_X$ and $\bar g$ respectively.

Recall that $(X\times \mathbb{R}^{n-2},\bar g,\bar f)$ is a shrinking Ricci soliton, that is,
\begin{align}\label{subslt3}
\Ric_{\bar g}+\nabla^2_{\bar g}\bar f=\frac{1}{2}\bar g.
\end{align}

Thus combining \eqref{subslt1}, \eqref{subslt2} and \eqref{subslt3}, we have
\begin{align}
\Ric_{\bar g_X}(V,V)+\nabla^2_{\bar g_X}\bar f_{X}(V,V)=\frac{1}{2}\bar g_X(V,V),
\end{align}
for any vector field $V\in T(X\times \{0^{n-2}\})$.

Thus $(X,\bar g_X,\bar f_{X})$ is a shrinking Ricci soliton of dimensional $2$.

Since $(X\times \mathbb{R}^{n-2},\bar g)$ is complete, $(X,\bar g_{X})$ is also complete, thus by the classification of complete $2$- dimensional shrinking Ricci solitons (see \cite[Theorem 3.13]{Ch23}, see also \cite{CCZ08}), $(X,\bar g_X,\bar f_X)$ is the shrinking Gaussian solition $(\mathbb{R}^{2},\bar g_{\text{Euc};\mathbb{R}^2},\bar f_{\text{Gau};\mathbb{R}^2})$ or the standard $\mathbb{S}^2$ or $\mathbb{RP}^2$  with constant $\bar f_X$. For $\alpha$ sufficiently large, it  contradicts to our supposition at the beginning of the proof, thus the lemma is proved.
\end{proof}

Now we prove Theorem \ref{mthmre1} and Theorem \ref{mthmre2} one by one:
\begin{proof}[proof of Theorem \ref{mthmre1}]
By \cite[Theorem 9.2]{CM22}, l there exists an $R=R(n)$ such that if $(M^n,g,f)$ is a gradient shrinking Ricci soliton and $\{\bar f_{\text{Gau};\mathbb{R}^n}\le R\}\subset \mathbb{R}^n$ is close to $\{f\le R\}\subset M$ in the smooth topology and $\bar f_{\text{Gau};\mathbb{R}^n}$ and $f$ are close in the smooth topology on this set, then $(\mathbb{R}^n,\bar g_{\text{Euc};\mathbb{R}^n},\bar f_{\text{Gau};\mathbb{R}^n})$ and $(M^n,g,f)$ are identical after a diffeomorhpism.

From the proof of  \cite[Theorem 9.2]{CM22}, we can see that to conclude   $(\mathbb{R}^n,\bar g_{\text{Euc};\mathbb{R}^n},\bar f_{\text{Gau};\mathbb{R}^n})$ and $(M^n,g,f)$ are identical, the closeness in the smooth topology could be weakened to that in $C^{2,\alpha}$ topology. That is, if $\{\bar f_{\text{Gau};\mathbb{R}^n}\le R\}\subset \mathbb{R}^n$ is close to $\{f\le R\}\subset M$ in $C^{2,\alpha}$ and $\bar f_{\text{Gau};\mathbb{R}^n}$ and $f$ are close in $C^{2,\alpha}$ topology on this set, then we can deduce that $(\mathbb{R}^n,\bar g_{\text{Euc};\mathbb{R}^n},\bar f_{\text{Gau};\mathbb{R}^n})$ and $(M^n,g,f)$ are identical after a diffeomorhpism. 

In fact, in line 9 in the proof of \cite[Theorem 9.21]{CM22}, Colding-Minicozzi II says that $(\mathbb{R}^n,\bar g_{\text{Euc};\mathbb{R}^n},\bar f_{\text{Gau};\mathbb{R}^n})$ and $(M^n,g,f)$ are identical  after a diffeomorhpism if a proposition $(\dag_R)$ holds for some $R=R(n)$ sufficiently large. And by \cite[Theorem 6.1]{CM22}, the proposition $(\dag_R)$ could be deduced from their proposition $(\star_R)$. For readers' convenience, we quote their proposition $(\star_R)$ here (in the special case where their model soliton $\Sigma$ is setted to be $(\mathbb{R}^n,\bar g_{\text{Euc};\mathbb{R}^n},\bar f_{\text{Gau};\mathbb{R}^n})$):
\begin{itemize}
\item[$(\star_R)$] There is a diffeomorphism $\Psi_R$ from a subset of $\mathbb{R}^n$ to $M$ that is onto $\{b<R\}$ so that $|\bar g_{\text{Euc};\mathbb{R}^n}-\Psi_R^*g|^2+|\bar f_{\text{Gau};\mathbb{R}^n}-\Psi_R^*f|^2\le \delta_0 e^{\bar f_{\text{Gau};\mathbb{R}^n}-\frac{R^2}{4}}$ and $\|\bar g_{\text{Euc};\mathbb{R}^n}-\Psi_R^*g\|_{C^{2,\alpha}}^2+\|\bar f_{\text{Gau};\mathbb{R}^n}-\Psi_R^*f\|_{C^{2,\alpha}}^2\le \delta_0$.
	\end{itemize}

	Here their $b=2f^\frac{1}{2}$ , and $\delta_0=\delta_0(n)$ is a fixed small constant depending only on $n$.

Let $r=2 R(n)+2C_1(n)$, where $C_1(n)$ is given in our Lemma \ref{lm2.4}. 
Then by Lemma \ref{lm2.4} we have
\begin{align}
B_r(p)\supset \{b<2R(n)\}.
\end{align}

Thus let $r=2 R(n)+2C_1(n)$, $\epsilon=\delta_0(n) e^{-\frac{R(n)^2}{4}}$ and $\ell=3$ in Lemma \ref{mthm2'}, by Lemma \ref{mthm2'}, there exists $\delta=\delta(n)>0$, such that any complete gradient shrinking Ricci soliton $(M^n,g,f)$ with $\lambda_n(\Delta_f) \le \frac{1}{2}+\delta$ satisfies the proposition $(\star_{R(n)})$.

 Thus by \cite[Theorem 6.1]{CM22}, the proposition $(\dag_{R(n)})$ in \cite{CM22} holds. And by \cite{CM22}, $(M,g,f)$ is identical to  $(\mathbb{R}^n,\bar g_{\text{Euc};\mathbb{R}^n},\bar f_{\text{Gau};\mathbb{R}^n})$, which proves the theorem.
\end{proof}

The proof of  Theorem \ref{mthmre2} is nearly the same as that of Theorem \ref{mthmre2}:
\begin{proof}[proof of Theorem \ref{mthmre2}]
This time we use the strong rigidity of the product soliton $(X,\bar g_X,\bar f_X)\times (\mathbb{R}^{n-2},\bar g_{\text{Euc};\mathbb{R}^{n-2}},\bar f_{\text{Gau};\mathbb{R}^{n-2}}):=(X\times \mathbb{R}^{n-2},\bar g_X+\bar g_{\text{Euc};\mathbb{R}^{n-2}},\bar f_X+\bar f_{\text{Euc};\mathbb{R}^{n-2}})$, 
where $(X,\bar g_X,\bar f_X)$ is the shrinking Gaussian solition $(\mathbb{R}^{2},\bar g_{\text{Euc};\mathbb{R}^2},\bar f_{\text{Gau};\mathbb{R}^2})$ or the standard $\mathbb{S}^2$ or $\mathbb{RP}^2$  with constant $\bar f_X$, and $(\mathbb{R}^{n-2},\bar g_{\text{Euc};\mathbb{R}^{n-2}},\bar f_{\text{Gau};\mathbb{R}^{n-2}})$ is the shrinking Gaussian solition

Similarly as the proof of Theorem \ref{mthmre1}, by \cite{CM22}, $(X\times \mathbb{R}^{n-2},\bar g_X+\bar g_{\text{Euc};\mathbb{R}^{n-2}},\bar f_X+\bar f_{\text{Euc};\mathbb{R}^{n-2}})$ and $(M^n,g,f)$ are identical  after a diffeomorhpism if the following proposition $(\star'_R)$ holds for some $R=R(n)$ sufficiently large:
\begin{itemize}
\item[$(\star'_R)$] There is a diffeomorphism $\Psi_R$ from a subset of $X\times \mathbb{R}^{n-2}$ to $M$ that is onto $\{b<R\}$ so that $|\bar g_X+\bar g_{\text{Euc};\mathbb{R}^{n-2}}-\Psi_R^*g|^2+|\bar f_X+\bar f_{\text{Euc};\mathbb{R}^{n-2}}-\Psi_R^*f|^2\le \delta_0 e^{\bar f_X+\bar f_{\text{Euc};\mathbb{R}^{n-2}}-\frac{R^2}{4}}$ and $\|\bar g_X+\bar g_{\text{Euc};\mathbb{R}^{n-2}}-\Psi_R^*g\|_{C^{2,\alpha}}^2+\|\bar f_X+\bar f_{\text{Euc};\mathbb{R}^{n-2}}-\Psi_R^*f\|_{C^{2,\alpha}}^2\le \delta_0$.
	\end{itemize}

We will prove (2) firstly, thus  we suppose  $\lambda_{n-2}(\Delta_f)\le \frac{1}{2}+\delta$. 
Let $r=2 R(n)+2C_1(n)$, where $C_1(n)$ is given in our Lemma \ref{lm2.4}. 
Then by Lemma \ref{lm2.4} we have
\begin{align}
B_r(p)\supset \{b<2R(n)\}.
\end{align}

Thus letting $r=2 R(n)+2C_1(n)$, $\epsilon=\delta_0(n) e^{-\frac{R(n)^2}{4}}$ and $\ell=3$ in Lemma \ref{mthm2}, by Lemma \ref{mthm2}, there exists $\delta=\delta(n,v)>0$, such that any complete gradient shrinking Ricci soliton $(M^n,g,f)$ with
\begin{align}\Vol_g(B_1(p))\ge v,
\end{align} 
and $\lambda_n(\Delta_f) \le \frac{1}{2}+\delta$, where $p$ is a minimum point of $f$, satisfies the proposition $(\star'_{R(n)})$.

 Then, similarly as in the proof of Theorem \ref{mthmre1}, by \cite{CM22}, $(M,g,f)$ is identical to  $(\Sigma,\bar g,\bar f)$, which proves Theorem \ref{mthmre2} (2).

 For (1), suppose $\lambda_{n-1}(\Delta_f)\le \frac{1}{2}+\delta$. We will proof Theorem \ref{mthmre2} (1) by using Theorem \ref{mthmre2} (2). By definition, we have
 \begin{align}
\lambda_{n-2}(\Delta_f)\le \lambda_{n-1}(\Delta_f)\le\frac{1}{2}+\delta.
 \end{align}

 Thus by Theorem \ref{mthmre2} (2), $(M,g,f)$ is identical to $(X,\bar g_X,\bar f_X)\times (\mathbb{R}^{n-2},\bar g_{\text{Euc};\mathbb{R}^{n-2}},\bar f_{\text{Gau};\mathbb{R}^{n-2}}):=(X\times \mathbb{R}^{n-2},\bar g_X+\bar g_{\text{Euc};\mathbb{R}^{n-2}},\bar f_X+\bar f_{\text{Euc};\mathbb{R}^{n-2}})$, 
where $(X,\bar g_X,\bar f_X)$ is $(\mathbb{R}^{2},\bar g_{\text{Euc};\mathbb{R}^2},\bar f_{\text{Gau};\mathbb{R}^2})$ or the standard $\mathbb{S}^2$ or $\mathbb{RP}^2$  with constant $\bar f_X$.

 However, if   $(X,\bar g_X,\bar f_X)$ is the standard $\mathbb{S}^2$ or $\mathbb{RP}^2$  with constant $\bar f_X$, then the eigenvalues of $\Delta_{\bar f_X}$ is just the eigenvalues of the classical Laplacian on $\mathbb{S}^2$ or $\mathbb{RP}^2$, and it is known that their spectrum is
 \begin{align}
\lambda_1^{\mathbb{S}^2}=\lambda_1^{\mathbb{RP}^2}=2.
 \end{align}

 And the eigenvalues of $\Delta_f$ on  $(M,g,f)$ are $\lambda_i^{\mathbb{S}^2}+\lambda_j^{\mathbb{R}^{n-2}}(\Delta_{\bar f_{\text{Gau};\mathbb{R}^{n-2}}})$ or $\lambda_i^{\mathbb{RP}^2}+\lambda_j^{\mathbb{R}^{n-2}}(\Delta_{\bar f_{\text{Gau};\mathbb{R}^{n-2}}})$.

Recall
 \begin{align}
\lambda_1^{\mathbb{R}^{n-2}}(\Delta_{\bar f_{\text{Gau};\mathbb{R}^{n-2}}})=...=\lambda_{n-2}^{\mathbb{R}^{n-2}}(\Delta_{\bar f_{\text{Gau};\mathbb{R}^{n-2}}})=\frac{1}{2}, \lambda_{n-1}^{\mathbb{R}^{n-2}}(\Delta_{\bar f_{\text{Gau};\mathbb{R}^{n-2}}})=1.
 \end{align}

 Thus $(M,g,f)$ must have $\lambda_{n-1}(\Delta_f)\ge 1$ if $(X,\bar g_X,\bar f_X)$ is the standard $\mathbb{S}^2$ or $\mathbb{RP}^2$  with constant $\bar f_X$, which is a contradiction if we assume $\delta\le \frac{1}{2}$ without loss of generality. Thus we have $(X,\bar g_X,\bar f_X)=(\mathbb{R}^{2},\bar g_{\text{Euc};\mathbb{R}^2},\bar f_{\text{Gau};\mathbb{R}^2})$, and 
$(M,g,f)$ is identical to $(\mathbb{R}^{n},\bar g_{\text{Euc};\mathbb{R}^n},\bar f_{\text{Gau};\mathbb{R}^n})$, which proves (1). Note that (2) is proved above, before the proof of (1), thus  the theorem is proved.
\end{proof}

\section*{Acknowledgements}
This work is supported by  National Key R\&D Program of China: 2022YFA1005500.

Chang Li is supported by National Natural Science Foundation of China (Grant No. 12301079).

\bibliographystyle{plain}

 \end{document}